\documentclass[11pt]{article}

\usepackage[margin=1in]{geometry}
\usepackage[utf8]{inputenc}
\usepackage[T1]{fontenc}
\usepackage{lmodern}
\usepackage{amsmath,amssymb,amsfonts,amsthm,mathtools}
\usepackage{bm}
\usepackage{microtype}
\usepackage{enumitem}
\usepackage[colorlinks=true,linkcolor=blue,citecolor=blue,urlcolor=blue]{hyperref}
\usepackage[capitalize,nameinlink]{cleveref}
\hypersetup{
  pdftitle={Filtered Vortex Stretching and Subgrid Defects for the Three-Dimensional Navier--Stokes Equations},
  pdfauthor={Runlong Yu},
  pdfkeywords={Navier--Stokes equations, vortex stretching, filtered vorticity, commutator stress}
}

\numberwithin{equation}{section}

\theoremstyle{plain}
\newtheorem{theorem}{Theorem}[section]
\newtheorem{proposition}[theorem]{Proposition}
\newtheorem{lemma}[theorem]{Lemma}
\newtheorem{corollary}[theorem]{Corollary}

\theoremstyle{definition}
\newtheorem{definition}[theorem]{Definition}

\theoremstyle{remark}
\newtheorem{remark}[theorem]{Remark}

\allowdisplaybreaks[2]
\crefname{theorem}{Theorem}{Theorems}
\Crefname{theorem}{Theorem}{Theorems}
\crefname{proposition}{Proposition}{Propositions}
\Crefname{proposition}{Proposition}{Propositions}
\crefname{lemma}{Lemma}{Lemmas}
\Crefname{lemma}{Lemma}{Lemmas}
\crefname{corollary}{Corollary}{Corollaries}
\Crefname{corollary}{Corollary}{Corollaries}
\crefname{remark}{Remark}{Remarks}
\Crefname{remark}{Remark}{Remarks}

\DeclareMathOperator*{\esssup}{ess\,sup}
\DeclareMathOperator{\curl}{curl}
\DeclareMathOperator{\diver}{div}

\DeclareMathOperator{\supp}{supp}

\newcommand{\R}{\mathbb R}

\newcommand{\eps}{\varepsilon}

\newcommand{\pv}{\operatorname{p.v.}}

\title{\textbf{Filtered Vortex Stretching and Subgrid Defects\\
		for the Three-Dimensional Navier--Stokes Equations}}

\author{Runlong Yu\\
	The University of Alabama, Tuscaloosa, AL 35487, USA\\
	\texttt{ryu5@ua.edu}}

\date{}

\begin{document}
	
	\maketitle
	
	\begin{abstract}
		We prove a finite-scale estimate for vortex stretching in spatially filtered three-dimensional Navier--Stokes flow.  The positive near-field part of the filtered stretching is bounded by a pairwise defect of filtered vorticity directions.  A magnitude-weighted direction inequality converts this angular defect into a first-order difference quotient of filtered vorticity, and the resulting term is absorbed by filtered diffusion up to a lower-order enstrophy reservoir.  In the localized filtered enstrophy balance, the remaining positive surplus is assigned to far-field strain, commutator forcing, and localization residuals.  The far-field term is reduced to weighted packing and conditional annular Carleson embedding.  The differentiated commutator stress is controlled by a scale-invariant increment defect adapted to the filter and its derivative.  At the critical exponent, bounded increment defects generate cylindrical generalized Young-measure profiles.
	\end{abstract}
	
	\noindent\textbf{Keywords.}
	Navier--Stokes equations; vortex stretching; filtered vorticity; geometric depletion; coarse graining; commutator stress; generalized Young measures.
	
	\medskip
	
	\noindent\textbf{2020 Mathematics Subject Classification.}
	Primary 35Q30; Secondary 35B65, 76D05, 76F02.

\tableofcontents

\section{Introduction}

We consider the three-dimensional incompressible Navier--Stokes equations with unit viscosity,
\begin{equation}\label{eq:NS}
\partial_t u-\Delta u+(u\cdot\nabla)u+\nabla p=0,
\qquad
\diver u=0.
\end{equation}
The finite-energy framework originates in the work of Leray \cite{Leray1934} and Hopf \cite{Hopf1951}, while the suitable-weak and partial-regularity theory was developed by Scheffer \cite{Scheffer1976}, Caffarelli--Kohn--Nirenberg \cite{CKN1982}, and Lin \cite{Lin1998}.  In vorticity form, with $\omega=\curl u$, the equation formally reads
\begin{equation}\label{eq:vorticity-raw}
\partial_t\omega-\Delta\omega+(u\cdot\nabla)\omega
=(\omega\cdot\nabla)u.
\end{equation}
The scalar product of the right-hand side with $\omega$ is the vortex-stretching work.  This term is absent in two dimensions and is the principal nonlinear term capable of amplifying enstrophy in three dimensions.  A basic question is therefore how much of this positive work can be depleted by the geometry of the vorticity field.

The classical geometric route begins from the fact that the strain tensor is a Calder\'on--Zygmund singular integral of vorticity.  Its contraction against the local vorticity direction contains an angular cancellation, a principle behind the vorticity-direction criterion of Constantin and Fefferman \cite{ConstantinFefferman1993}, the Euler constraints of Constantin--Fefferman--Majda \cite{ConstantinFeffermanMajda1996}, later Sobolev and boundary refinements such as \cite{BeiraoBerselli2002}, and localized formulations such as \cite{Grujic2009}.  We also refer to \cite{MajdaBertozzi2002} for the vorticity formulation and Biot--Savart law, to \cite{CalderonZygmund1952} for the singular-integral background, and to \cite{Constantin1994} for a broader geometric perspective.

The difficulty is that this geometric cancellation is singular at precisely the level where one would like to use it.  The vorticity direction is undefined on the zero set of vorticity and may have no pointwise regularity for energy-class solutions.  The near-field strain kernel is singular, the far-field strain is nonlocal, and any coarse-grained formulation necessarily creates commutator residuals.  Thus one has to do more than estimate the stretching term: one should place the estimate inside a localized filtered enstrophy balance and keep track of every positive remainder that is not absorbed by diffusion.

This paper addresses that problem by combining geometric depletion with spatial coarse-graining.  Smooth filtering gives resolved equations and exact commutator identities; see Germano's filtering framework \cite{Germano1992} and smooth coarse-graining formulations such as \cite{EyinkAluie2009}.  For a compactly supported mollifier $\varphi$ and a filter length $\ell>0$, set
\[
U_\ell=\varphi_\ell*u,
\qquad
\Omega_\ell=\curl U_\ell,
\qquad
\mathbb S_\ell=\frac12\bigl(\nabla U_\ell+\nabla U_\ell^{T}\bigr).
\]
The filtered vorticity $\Omega_\ell$ is spatially smooth even when $u$ belongs only to the energy class.  Hence the filtered direction $\xi_\ell=\Omega_\ell/|\Omega_\ell|$, with the standard convention on the zero set, can be used without imposing pointwise regularity on the original vorticity.

The first main result is a finite-scale coercive estimate for the singular near-field part of the filtered stretching.  At physical scale $r$ and relative filter length $\ell$, the estimate has the following scale-normalized form
\begin{equation}\label{eq:intro-nearfield-chain}
\mathcal V_{r,\ell}^{+,\mathrm{near}}
\lesssim
\mathcal A_{r,\ell}^{\mathrm{pair}}
\le
\eta\mathcal P_{r,\ell}^{\rho}
+
C_\eta M_{r,\rho}(u)
\left(\frac r\ell\right)^5
\mathcal O_{r,\ell}.
\end{equation}
Here $\mathcal A_{r,\ell}^{\mathrm{pair}}$ is a pairwise defect of filtered vorticity directions, $\mathcal P_{r,\ell}^{\rho}$ is the localized filtered diffusion, $\mathcal O_{r,\ell}$ is the filtered enstrophy reservoir, and $M_{r,\rho}(u)$ is the local energy bound.  Consequently, for every $\eps\in(0,1)$,
\begin{equation}\label{eq:intro-final}
\mathcal V_{r,\ell}^{+,\mathrm{near}}
\le
(1-\eps)\mathcal P_{r,\ell}^{\rho}
+
C_\eps M_{r,\rho}(u)
\left(\frac r\ell\right)^5
\mathcal O_{r,\ell}.
\end{equation}
At a fixed relative filter scale $\ell=\sigma r$, the coefficient is uniform in the physical scale $r$.  The factor $\sigma^{-5}$ records the energy-class smoothing input and isolates the remaining loss as a filter-ratio issue.

The estimate behind \eqref{eq:intro-nearfield-chain} is purely geometric.  The exact contraction of the strain kernel converts positive near-field stretching into angular incoherence of the filtered vorticity direction.  The magnitude weights already present in the stretching term then convert this angular incoherence into first-order increments of $\Omega_\ell$, avoiding any regularized denominator for $\xi_\ell$ and placing the defect at the diffusive scale controlled by $\nabla\Omega_\ell$.  This part of the argument uses only incompressibility, the Calder\'on--Zygmund representation of strain, and the filtered direction geometry.  The Navier--Stokes dynamics enter when the estimate is inserted into the exact localized filtered enstrophy balance.

After this insertion, the near-field term is closed by diffusion.  Every remaining positive surplus is assigned to one of three explicit residual classes: far-field strain, commutator forcing, or localization.  The far-field term has a different geometry from the singular near field.  A distant vorticity shell acts on a smaller core through a slowly varying external strain.  The direct energy-level estimate gives a weighted $\mathcal W_{3/2}$ packing bound; annular reassignment rewrites the same interaction as a discrete Carleson embedding problem; and the harmonic expansion of the exterior strain isolates the low-order affine jet as the mode that can remain visible across nested scales.  Thus the far-field obstruction is reduced to a concrete harmonic-packing and affine-jet cancellation problem.

The commutator term is controlled at the level selected by the differentiated stress.  With
\[
R_\ell=\varphi_\ell*(u\otimes u)-U_\ell\otimes U_\ell,
\]
the filtered vorticity equation contains the forcing $-\curl\diver R_\ell$.  Thus the subgrid, or equivalently commutator, stress is used here as an exact residual of the filtering operation.  Since differentiating $R_\ell$ differentiates the filter kernel, an increment bound based only on $\varphi_\ell$ does not retain the exact structure of $\nabla\!\cdot R_\ell$.  We therefore introduce a derivative-compatible, scale-invariant envelope which samples velocity increments simultaneously against $\varphi_\ell$ and $\ell|\nabla\varphi_\ell|$.  The resulting commutator estimate replaces a scale-worse energy bound by the increment defect $\widetilde{\mathcal S}_{r,\ell}^{(p)}$.

This derivative-compatible formulation also identifies the correct compactness object for a persistent commutator defect.  Onsager-type coarse-graining and local-defect balances provide scalar fluxes, structure functions, and increment norms \cite{ConstantinETiti1994,DuchonRobert2000,EyinkAluie2009}.  These quantities measure the size of a commutator residual, but they do not by themselves identify the microstructure that carries a nonvanishing defect.  At the critical exponent $p=3$, boundedness of $\widetilde{\mathcal S}^{(3)}$ yields a cylindrical DiPerna--Majda generalized Young-measure profile
\[
(\nu_{x,t},\lambda,\nu_{x,t}^{\infty})
\]
on the derivative-compatible increment space \cite{DiPernaMajda1987,Ball1989,AlibertBouchitte1997}.  This profile separates the resolved increment barycenter, non-Dirac oscillation, and concentration with asymptotic direction.  Since the commutator covariance map has quadratic growth below the controlled quartic density, genuine commutator stress defects force non-Dirac increment oscillation, while strict quartic excess forces oscillation or concentration.  The Young measure is therefore the compactness object on which rigidity, recurrence, and positive defect work can be tested.

\subsection*{Main estimates}
The main estimates are the following.
\begin{enumerate}[label=\textup{(\roman*)},leftmargin=2.2em]
\item \emph{Universal near-field coercivity.}  Positive singular stretching is dominated by a pairwise direction defect and absorbed by filtered diffusion up to a lower-order enstrophy reservoir; see \cref{thm:geometric,thm:coercivity,cor:stretching-coercivity}.
\item \emph{Exact balance reduction.}  The filtered vorticity equation gives an exact filtered enstrophy balance in which the near-field term is closed and every surviving positive surplus is assigned to far-field strain, commutator forcing, or localization residuals; see Proposition~\ref{prop:enstrophy-identity} and \cref{thm:weighted-surplus}.
\item \emph{Far-field harmonic-jet structure.}  Weighted packing is complemented by annular reassignment, conditional unweighted Carleson closure, and a fixed-annulus harmonic route that isolates affine jets as possible recurrent low-order modes; see Proposition~\ref{prop:reassigned-farfield} and \cref{thm:conditional-carleson}.
\item \emph{Derivative-compatible commutator estimate.}  The differentiated commutator stress is controlled by diffusion plus a scale-invariant increment defect adapted jointly to the filter and its derivative; see \cref{thm:increment-comm}.
\item \emph{Banach-valued obstruction profiles.}  Persistent bounded critical increment defects admit cylindrical generalized Young-measure extraction.  Under an additional full-representation hypothesis, unresolved quartic excess or genuine commutator stress defect yields nontrivial increment microstructure; see \cref{thm:young-extraction,prop:comm-stress-defect}.
\end{enumerate}
The result is a finite-scale balance theorem.  It closes the singular near-field stretching term and reduces scale-uniform closure to explicit harmonic, compactness, and localization inputs.  In this sense the paper advances the geometric-depletion and coarse-graining viewpoints from scalar diagnostics toward a set of compactness alternatives for filtered vorticity dynamics.

The paper is organized as follows.  In \cref{sec:setting} we define the filtered quantities and state the near-field coercivity results.  The strain representation and exact direction cancellation are proved in \cref{sec:geometry}.  The algebraic and difference-quotient estimates are developed in \cref{sec:difference}, and the coercivity theorem is proved in \cref{sec:coercivity}.  In \cref{sec:balance} we insert the estimate into the localized filtered enstrophy identity.  Sections~\ref{sec:dyadic}--\ref{sec:weighted-surplus} develop the dyadic chain, far-field packing and annular reassignment, derivative-compatible commutator insertion, and the weighted surplus inequality.  \Cref{sec:obstruction} constructs the commutator obstruction profile and defect-work test.  The final section records the resulting alternatives.

\section{Setting and main results}\label{sec:setting}

\subsection{Cylinders, filters, and local energy}

For $z_0=(x_0,t_0)\in\R^3\times\R$ and $r>0$, set
\[
B_r=B_r(x_0),
\qquad
I_r=(t_0-r^2,t_0),
\qquad
Q_r=B_r\times I_r.
\]
Fix
\begin{equation}\label{eq:rho-range}
0<\rho\le \frac14,
\qquad
0<\ell\le \rho r.
\end{equation}
Let
\begin{equation}\label{eq:mollifier}
\varphi\in C_c^\infty(B_1),
\qquad
\varphi\ge0,
\qquad
\int_{\R^3}\varphi\,dx=1,
\end{equation}
and define
\[
\varphi_\ell(x)=\ell^{-3}\varphi(x/\ell),
\qquad
S_\ell f=\varphi_\ell*f.
\]
For a divergence-free velocity field $u$, set
\begin{equation}\label{eq:filtered-fields}
U_\ell=S_\ell u,
\qquad
\Omega_\ell=\curl U_\ell,
\qquad
\mathbb S_\ell=\frac12\bigl(\nabla U_\ell+\nabla U_\ell^T\bigr).
\end{equation}
We define the filtered vorticity direction by
\begin{equation}\label{eq:direction}
\xi_\ell(x,t)=
\begin{cases}
\dfrac{\Omega_\ell(x,t)}{|\Omega_\ell(x,t)|},&\Omega_\ell(x,t)\ne0,\\[1.2ex]
0,&\Omega_\ell(x,t)=0.
\end{cases}
\end{equation}

For the universal near-field estimates, it is enough to assume
\begin{equation}\label{eq:energy-class-local}
u\in L^\infty(I_r;L^2(\R^3)),
\qquad
\diver u=0,
\end{equation}
and the scale-invariant local energy bound
\begin{equation}\label{eq:M-def}
M_{r,\rho}(u)
:=\frac1r\esssup_{t\in I_r}
\int_{B_{(1+2\rho)r}}|u(x,t)|^2\,dx
<\infty.
\end{equation}
The whole-space assumption in \eqref{eq:energy-class-local} ensures that the Biot--Savart representation of the filtered strain is available without an extension construction. The estimate itself uses only the local quantity \eqref{eq:M-def}, because the mollifier is compactly supported.

Let
\begin{equation}\label{eq:theta}
\vartheta\in C_c^\infty(B_1),
\qquad
0\le\vartheta\le1,
\qquad
\vartheta\equiv1\ \text{on }B_{1/2},
\end{equation}
be radial, and define
\begin{equation}\label{eq:Ctheta}
C_\vartheta
:=\int_{B_1}\frac{\vartheta(w)}{|w|^2}\,dw
\le 4\pi.
\end{equation}
We also fix a filter constant $C_\varphi>0$ such that
\begin{equation}\label{eq:Cphi-def}
\|\curl(\varphi_\ell*f)\|_{L^\infty(\R^3)}
\le C_\varphi\ell^{-5/2}\|f\|_{L^2(\R^3)}
\end{equation}
for every $f\in L^2(\R^3;\R^3)$. Such a constant depends only on finitely many $L^2$ norms of the first derivatives of $\varphi$.

\subsection{Near-field strain and scale-normalized quantities}

The rate-of-strain tensor has a Calder\'on--Zygmund representation in terms of $\Omega_\ell$; the kernel is recorded explicitly in \cref{sec:geometry}. Its near-field part at radius $\rho r$ is
\begin{equation}\label{eq:S-near-def}
\mathbb S_\ell^{\mathrm{near}}(x,t)
:=
\frac{3}{8\pi}\pv\int_{\R^3}
\vartheta\!\left(\frac z{\rho r}\right)
\frac{
(\widehat z\times\Omega_\ell(x-z,t))\otimes\widehat z
+
\widehat z\otimes(\widehat z\times\Omega_\ell(x-z,t))
}{|z|^3}\,dz,
\end{equation}
where $\widehat z=z/|z|$. We set
\begin{equation}\label{eq:S-rem-def}
\mathbb S_\ell^{\mathrm{rem}}
:=\mathbb S_\ell-\mathbb S_\ell^{\mathrm{near}}.
\end{equation}

Let $\chi\in C_c^\infty(Q_r)$ satisfy $0\le\chi\le1$. Define the localized filtered enstrophy and enlarged filtered diffusion by
\begin{equation}\label{eq:O-def}
\mathcal O_{r,\ell}[\chi]
:=
\frac1r\iint_{Q_r}\chi\,|\Omega_\ell|^2\,dx\,dt,
\end{equation}
\begin{equation}\label{eq:P-def}
\mathcal P_{r,\ell}^{\rho}
:=
r\int_{I_r}\int_{B_{(1+\rho)r}}|\nabla\Omega_\ell|^2\,dx\,dt.
\end{equation}
The positive near-field stretching is
\begin{equation}\label{eq:V-near-def}
\mathcal V_{r,\ell}^{+,\mathrm{near}}[\chi]
:=
r\iint_{Q_r}\chi\,
\bigl(\mathbb S_\ell^{\mathrm{near}}\Omega_\ell\cdot\Omega_\ell\bigr)_+
\,dx\,dt.
\end{equation}
Finally, the pairwise direction defect is
\begin{align}
\mathcal A_{r,\ell}^{\mathrm{pair}}[\chi]
:= {}&
r\iint_{Q_r}\chi(x,t)|\Omega_\ell(x,t)|^2
\nonumber\\
&\times
\int_{\R^3}
\vartheta\!\left(\frac z{\rho r}\right)
\frac{
|\xi_\ell(x,t)-\xi_\ell(x-z,t)|\,
|\Omega_\ell(x-z,t)|
}{|z|^3}\,dz\,dx\,dt.
\label{eq:A-def}
\end{align}
The inner integral in \eqref{eq:A-def} may initially be interpreted by monotone truncation at $|z|=\delta$. The coercivity theorem below proves that the resulting limit is finite.

All quantities in \eqref{eq:O-def}--\eqref{eq:A-def} are invariant under the simultaneous Navier--Stokes rescaling of $u$, $r$, and $\ell$.

\subsection{Main estimates}

\begin{theorem}[Filtered near-field geometric depletion]\label{thm:geometric}
Assume \eqref{eq:rho-range}--\eqref{eq:energy-class-local}. Then
\begin{equation}\label{eq:geometric-main}
\mathcal V_{r,\ell}^{+,\mathrm{near}}[\chi]
\le
\frac{3}{8\pi}\,
\mathcal A_{r,\ell}^{\mathrm{pair}}[\chi].
\end{equation}
\end{theorem}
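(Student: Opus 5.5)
The estimate is a pointwise statement in $(x,t)$ integrated against $r\chi$, so the plan is to prove, for each fixed $(x,t)$ with $\Omega_\ell(x,t)\ne0$,
\[
\bigl(\mathbb S_\ell^{\mathrm{near}}\Omega_\ell\cdot\Omega_\ell\bigr)_+(x,t)
\le \frac{3}{8\pi}|\Omega_\ell(x,t)|^2\int_{\R^3}\vartheta\!\left(\frac z{\rho r}\right)\frac{|\xi_\ell(x,t)-\xi_\ell(x-z,t)|\,|\Omega_\ell(x-z,t)|}{|z|^3}\,dz .
\]
Points with $\Omega_\ell(x,t)=0$ contribute zero on both sides. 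Multiplying by $r\chi\ge0$ and integrating over $Q_r$ then gives \eqref{eq:geometric-main} by monotonicity.

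Fix $(x,t)$ and write $\xi=\xi_\ell(x,t)$ and $\Omega'=\Omega_\ell(x-z,t)$. Contracting the kernel in \eqref{eq:S-near-def} against $\Omega_\ell(x)\otimes\Omega_\ell(x)$ gives the integrand
\[
\frac{2}{|z|^3}\,\bigl((\widehat z\times\Omega')\cdot\Omega_\ell(x)\bigr)\bigl(\widehat z\cdot\Omega_\ell(x)\bigr)
= \frac{2|\Omega_\ell(x)|^2}{|z|^3}\,\det(\widehat z,\Omega',\xi)\,(\widehat z\cdot\xi).
\]
The key cancellation is that $\det(\widehat z,\Omega',\xi)$ is unchanged if $\Omega'$ is replaced by $\Omega'-|\Omega'|\xi$, since $\det(\widehat z,\xi,\xi)=0$. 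Also $\Omega'-|\Omega'|\xi=|\Omega'|(\xi_\ell(x-z)-\xi)$, and this identity holds at the zero set too, because both sides vanish there. This yields the pointwise bound
\[
\bigl|\det(\widehat z,\Omega',\xi)(\widehat z\cdot\xi)\bigr|\le |\Omega'|\,|\xi_\ell(x-z)-\xi|\,\bigl|(\widehat z\times(\xi_\ell(x-z)-\xi))\cdot\xi\bigr|/|\xi_\ell(x-z)-\xi|\cdot|\widehat z\cdot\xi| .
\]
The remaining step is to bound the trigonometric factor $|(\widehat z\times e)\cdot\xi|\,|\widehat z\cdot\xi|$ for a unit vector $e$. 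Writing $(\widehat z\times e)\cdot\xi=e\cdot(\xi\times\widehat z)$, we have $|\xi\times\widehat z|\,|\xi\cdot\widehat z|=\sin\alpha\cos\alpha\le\tfrac12$. The factor $2$ cancels this $\tfrac12$, giving exactly the constant $\frac{3}{8\pi}$.

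The main obstacle is justifying the principal value. For the truncated integrals over $|z|>\delta$, the contraction identity and the pointwise bound are exact. We then pass $\delta\to0$. On the left, the p.v. limit exists because $\Omega_\ell$ is smooth and $\vartheta$ is radial: the kernel has mean zero on spheres, so the $\Omega_\ell(x)$ term cancels. Note that subtracting the constant $|\Omega'|\xi$ is \emph{not} a constant in $z$, but the determinant identity is pointwise, so no mean-zero argument is needed there. On the right, the integrand is nonnegative, so its truncations increase monotonically, consistent with the monotone-truncation interpretation of \eqref{eq:A-def}. Hence
\[
|\text{p.v. limit}|\le\liminf_{\delta\to0}\int_{|z|>\delta}|\cdots|\le \text{monotone limit},
\]
which may a priori be $+\infty$, in which case the inequality is trivial. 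Taking the positive part on the left and integrating in $(x,t)$ completes the argument. Measurability of $\xi_\ell$ and the Fubini/Tonelli step are routine because $\Omega_\ell$ is smooth in $x$ and measurable in $t$, and all integrands are nonnegative.
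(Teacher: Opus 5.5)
Your proposal is correct and follows the paper's argument. Like the paper, you contract the truncated near-field strain with $\xi$, subtract $\xi$ exactly inside the integrand using $(\xi\times\widehat z)\cdot\xi=0$ (your $\det(\widehat z,\xi,\xi)=0$), and use $|\xi\cdot\widehat z|\,|\xi\times\widehat z|\le\frac12$ to get the constant $\frac{3}{8\pi}$. The only difference is minor and occurs in the limit $\delta\downarrow0$: you establish the principal value pointwise before integrating (from the smoothness of $\Omega_\ell$, the radial cutoff, and the zero spherical mean of the kernel). The paper instead integrates the truncated inequality and invokes a.e.\ principal-value convergence, Fatou's lemma, and monotone convergence.
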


\begin{theorem}[Pairwise defect coercivity]\label{thm:coercivity}
Assume \eqref{eq:rho-range}--\eqref{eq:energy-class-local}. For every $\eta>0$,
\begin{equation}\label{eq:coercivity-main}
\mathcal A_{r,\ell}^{\mathrm{pair}}[\chi]
\le
\eta\,\mathcal P_{r,\ell}^{\rho}
+
\frac{C_\vartheta^2C_\varphi^2\rho^2}{\eta}\,
M_{r,\rho}(u)
\left(\frac r\ell\right)^5
\mathcal O_{r,\ell}[\chi].
\end{equation}
In particular, $\mathcal A_{r,\ell}^{\mathrm{pair}}[\chi]<\infty$ whenever the right-hand side is finite.
\end{theorem}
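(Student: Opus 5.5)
The plan is to prove \eqref{eq:coercivity-main} pointwise in time: first remove the direction field using the magnitude weights already present in \eqref{eq:A-def}, then convert the resulting increment into a gradient and finish with a weighted Young inequality. No dynamics are used, only the smoothing bound \eqref{eq:Cphi-def} and the support properties of $\varphi$ and $\vartheta$.

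\emph{Step 1 (magnitude-weighted direction inequality).} For $a,b\in\R^3$ with the convention \eqref{eq:direction}, I will use the elementary bound
\[
\min(|a|,|b|)\,\Bigl|\tfrac{a}{|a|}-\tfrac{b}{|b|}\Bigr|\le |a-b|.
\]
To prove it, assume $0<|b|\le|a|$ and note that $\bigl|\tfrac{|b|}{|a|}a-b\bigr|\le|a-b|$, because $\tfrac{|b|}{|a|}a$ is the nearest point to $b$ on the ray through $a$ at radius $|b|$. If $a$ or $b$ vanishes, the left side is $0$. Multiplying by $\max(|a|,|b|)$ and taking $a=\Omega_\ell(x)$, $b=\Omega_\ell(x-z)$ gives
\[
|\Omega_\ell(x)|^2|\Omega_\ell(x-z)|\,|\xi_\ell(x)-\xi_\ell(x-z)|
\le |\Omega_\ell(x)|\,K(t)\,|\Omega_\ell(x)-\Omega_\ell(x-z)|.
\]
Here $K(t):=\sup_{B_{(1+\rho)r}}|\Omega_\ell(\cdot,t)|$. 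This is legitimate because $x\in B_r$ and $|z|<\rho r$ force $x-z\in B_{(1+\rho)r}$.

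\emph{Step 2 ($L^\infty$ bound on $K$).} Since $\varphi_\ell$ is supported in $B_\ell$ and $\ell\le\rho r$, the field $\Omega_\ell$ on $B_{(1+\rho)r}$ depends only on $u\,\mathbf 1_{B_{(1+2\rho)r}}$. Applying \eqref{eq:Cphi-def} to $f=u\mathbf 1_{B_{(1+2\rho)r}}$ gives
\[
K(t)^2\le C_\varphi^2\ell^{-5}\,r\,M_{r,\rho}(u)
\]
for a.e.\ $t\in I_r$.

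\emph{Step 3 (difference quotient and Young).} Write $\Omega_\ell(x)-\Omega_\ell(x-z)=\int_0^1\nabla\Omega_\ell(x-sz)z\,ds$. Then $|z|^{-3}|\Omega_\ell(x)-\Omega_\ell(x-z)|\le|z|^{-2}\int_0^1|\nabla\Omega_\ell(x-sz)|\,ds$. Introduce the measure $d\mu=\vartheta(z/(\rho r))|z|^{-2}\,dz\,ds$ on $\R^3\times(0,1)$, whose total mass is $C_\vartheta\rho r$ by \eqref{eq:Ctheta}. This yields
\[
\mathcal A^{\mathrm{pair}}_{r,\ell}[\chi]\le r\iint\!\!\int \chi(x)|\Omega_\ell(x)|\,K\,|\nabla\Omega_\ell(x-sz)|\,d\mu\,dx\,dt .
\]
Apply $AB\le\frac\alpha2B^2+\frac1{2\alpha}A^2$ with $A=\chi|\Omega_\ell(x)|K$ and $B=|\nabla\Omega_\ell(x-sz)|$, using $\chi^2\le\chi$. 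For the $B$-term, translate $x\mapsto x-sz$, which stays inside $B_{(1+\rho)r}$, so it is bounded by $\frac\alpha2 C_\vartheta\rho r\,\mathcal P^\rho_{r,\ell}$. The $A$-term is at most $\frac1{2\alpha}C_\vartheta\rho r\cdot r\cdot\sup_t K^2\cdot r\,\mathcal O_{r,\ell}[\chi]$. Choosing $\alpha=2\eta/(C_\vartheta\rho r)$ and inserting Step 2 gives
\[
\mathcal A^{\mathrm{pair}}_{r,\ell}[\chi]\le\eta\,\mathcal P^\rho_{r,\ell}+\frac{C_\vartheta^2C_\varphi^2\rho^2}{4\eta}\,M_{r,\rho}(u)\Bigl(\frac r\ell\Bigr)^5\mathcal O_{r,\ell}[\chi],
\]
which is stronger than \eqref{eq:coercivity-main}. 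All integrands are nonnegative, so Tonelli justifies the interchanges. The monotone truncation at $|z|=\delta$ passes to the limit, which also yields the finiteness claim.

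The main obstacle is Step 1: one must avoid a regularized denominator for $\xi_\ell$. The naive bound $|\xi(x)-\xi(y)|\le 2|a-b|/|a|$ would cost a factor $2$ and would not handle points where only $|\Omega_\ell(x-z)|$ is small. The min/max form of the inequality resolves this cleanly, and the scaling bookkeeping in Step 3 then produces exactly the factor $(r/\ell)^5$.
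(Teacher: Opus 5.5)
Your proof is correct and follows the paper's route: magnitude-weighted direction inequality, fundamental theorem of calculus on the increment, the local $L^2\to L^\infty$ filter bound on $B_{(1+\rho)r}$, then Young; your pointwise Young inequality under $\vartheta(z/(\rho r))|z|^{-2}\,dz\,ds$ replaces the paper's Minkowski and Cauchy--Schwarz steps at the same cost. Your extra factor $1/4$ comes only from the constant-one bound $\min\{|a|,|b|\}\,|\widehat a-\widehat b|\le|a-b|$, which is true but not for the reason you give (the ray through $a$ meets the sphere of radius $|b|$ in a single point, so ``nearest point'' proves nothing; instead, $y\mapsto\min\{1,|b|/|y|\}\,y$ is the metric projection onto the convex ball of radius $|b|$, hence $1$-Lipschitz, and it fixes $b$), and your closing remark is inaccurate: the paper uses exactly the bound $|a|\,|\widehat a-\widehat b|\le 2|a-b|$, which holds for every $b$ including $b=0$, so its only cost is that factor $4$.
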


\begin{corollary}[Near-field stretching-to-diffusion coercivity]\label{cor:stretching-coercivity}
Under the assumptions of \cref{thm:coercivity}, for every $\eps\in(0,1)$,
\begin{equation}\label{eq:stretching-coercivity}
\mathcal V_{r,\ell}^{+,\mathrm{near}}[\chi]
\le
(1-\eps)\,\mathcal P_{r,\ell}^{\rho}
+
C_{\eps,\vartheta,\varphi}\,
\rho^2 M_{r,\rho}(u)
\left(\frac r\ell\right)^5
\mathcal O_{r,\ell}[\chi],
\end{equation}
where one may take
\begin{equation}\label{eq:Ceps-explicit}
C_{\eps,\vartheta,\varphi}
=
\frac{9C_\vartheta^2C_\varphi^2}{64\pi^2(1-\eps)}.
\end{equation}
\end{corollary}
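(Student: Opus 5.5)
The corollary follows by chaining \cref{thm:geometric} and \cref{thm:coercivity}, with the free parameter $\eta$ chosen so that the diffusion coefficient becomes exactly $1-\eps$. No new estimates are needed; the work is in matching constants.

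First, by \cref{thm:geometric},
\[
\mathcal V_{r,\ell}^{+,\mathrm{near}}[\chi]\le \frac{3}{8\pi}\mathcal A_{r,\ell}^{\mathrm{pair}}[\chi].
\]
Second, apply \cref{thm:coercivity} with a parameter $\eta>0$ to be fixed:
\[
\mathcal V_{r,\ell}^{+,\mathrm{near}}[\chi]\le \frac{3\eta}{8\pi}\mathcal P_{r,\ell}^{\rho}+\frac{3}{8\pi}\cdot\frac{C_\vartheta^2C_\varphi^2\rho^2}{\eta}M_{r,\rho}(u)\Bigl(\frac r\ell\Bigr)^5\mathcal O_{r,\ell}[\chi].
\]
Third, choose $\eta=\frac{8\pi}{3}(1-\eps)$, which is positive for $\eps\in(0,1)$. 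The diffusion coefficient then equals $1-\eps$. The reservoir coefficient becomes
\[
\frac{3}{8\pi}\cdot\frac{3}{8\pi(1-\eps)}\,C_\vartheta^2C_\varphi^2\rho^2=\frac{9C_\vartheta^2C_\varphi^2}{64\pi^2(1-\eps)}\,\rho^2,
\]
which is exactly \eqref{eq:Ceps-explicit} times $\rho^2$, giving \eqref{eq:stretching-coercivity}.

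The only point needing care is the case where the right-hand side of \eqref{eq:stretching-coercivity} is infinite. There the inequality is trivial. Otherwise \cref{thm:coercivity} guarantees that $\mathcal A^{\mathrm{pair}}_{r,\ell}[\chi]$ is finite, so the chain involves no $\infty-\infty$ ambiguity. There is no genuine obstacle beyond this bookkeeping. Note also that the choice of $\eta$ is made for each fixed $\eps$ and is independent of $r$, $\ell$ and $\chi$.
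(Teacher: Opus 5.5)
Your proposal is correct and is exactly the paper's argument: combine \cref{thm:geometric} with \cref{thm:coercivity} and take $\eta=\frac{8\pi}{3}(1-\eps)$. This choice gives diffusion coefficient $1-\eps$ and reservoir constant $\frac{9C_\vartheta^2C_\varphi^2}{64\pi^2(1-\eps)}\rho^2$, and your remark on the infinite case is harmless bookkeeping that the paper leaves implicit.
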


\begin{proof}
Combine \cref{thm:geometric,thm:coercivity} and choose
\[
\eta=\frac{8\pi}{3}(1-\eps).
\]
\end{proof}

\begin{corollary}[Fixed relative filter scale]\label{cor:fixed-ratio}
Let $\ell=\sigma r$ with $0<\sigma\le\rho$. If $M_{r,\rho}(u)\le M$, then
\begin{equation}\label{eq:fixed-ratio}
\mathcal V_{r,\sigma r}^{+,\mathrm{near}}[\chi]
\le
(1-\eps)\mathcal P_{r,\sigma r}^{\rho}
+
C_{\eps,\vartheta,\varphi}\rho^2M\sigma^{-5}
\mathcal O_{r,\sigma r}[\chi].
\end{equation}
The constant is uniform in the physical scale $r$.
\end{corollary}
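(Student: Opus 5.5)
This corollary is a direct specialization of \cref{cor:stretching-coercivity}, so the plan is to substitute rather than to prove anything new. We set $\ell=\sigma r$ with $0<\sigma\le\rho$. The constraint $0<\ell\le\rho r$ in \eqref{eq:rho-range} then holds, and $\rho\le\frac14$ is inherited unchanged. The assumptions \eqref{eq:mollifier}--\eqref{eq:energy-class-local} do not involve $\ell$, so \cref{cor:stretching-coercivity} applies at filter length $\sigma r$ for every $\eps\in(0,1)$. This gives \eqref{eq:stretching-coercivity} with every filtered quantity evaluated at $\ell=\sigma r$.

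Next we rewrite the coefficient. Since $(r/\ell)^5=\sigma^{-5}$, and $M_{r,\rho}(u)\le M$ with $\mathcal O_{r,\sigma r}[\chi]\ge0$ (it is the integral of a nonnegative integrand), we may replace $M_{r,\rho}(u)$ by $M$ in the reservoir term. This yields \eqref{eq:fixed-ratio} with exactly the constant \eqref{eq:Ceps-explicit}. If $\mathcal O_{r,\sigma r}[\chi]=\infty$ or $\mathcal P^{\rho}_{r,\sigma r}=\infty$, the inequality is trivial, so no finiteness issue arises.

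Finally, uniformity in $r$. The constant $C_{\eps,\vartheta,\varphi}$ in \eqref{eq:Ceps-explicit} depends only on $\eps$, on $C_\vartheta$ (fixed by $\vartheta$), and on $C_\varphi$. The one point to check is that $C_\varphi$ in \eqref{eq:Cphi-def} is independent of $\ell$, and hence of $r$. This holds by its definition: the $\ell^{-5/2}$ scaling is explicit, and the constant depends only on $\varphi$. Consequently the whole coefficient $C_{\eps,\vartheta,\varphi}\rho^2M\sigma^{-5}$ depends only on $\eps,\vartheta,\varphi,\rho,M,\sigma$.

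There is no genuine obstacle in this argument; the only step needing care is confirming that no constant secretly depends on $\ell$ or $r$. The scale invariance of $\mathcal O$, $\mathcal P$ and $\mathcal V$ noted after \eqref{eq:A-def} is consistent with this, but it is not needed for the proof.
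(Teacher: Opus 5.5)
Your proposal is correct and matches the paper, which gives no separate proof and treats \cref{cor:fixed-ratio} as the immediate specialization of \cref{cor:stretching-coercivity} at $\ell=\sigma r$. The argument is the one you give: $(r/\ell)^5=\sigma^{-5}$, the bound $M_{r,\rho}(u)\le M$ is used together with $\mathcal O_{r,\sigma r}[\chi]\ge0$, and uniformity in $r$ holds because $C_\vartheta$ and $C_\varphi$ are fixed independently of $\ell$ and $r$.
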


\begin{remark}[Nature of the loss]\label{rem:sigma-loss}
The estimate is invariant under simultaneous parabolic scaling of $r$ and $\ell$. The loss is instead a degeneration in the relative filter parameter $\sigma=\ell/r$: the coefficient behaves like $\sigma^{-5}$ as $\sigma\downarrow0$. This is the square of the basic energy-class smoothing estimate $\|\Omega_\ell\|_\infty\lesssim\ell^{-5/2}\|u\|_2$.
\end{remark}

\begin{remark}[Kinematic universality]\label{rem:kinematic}
The near-field depletion and coercivity estimates hold for every divergence-free energy-class field on $\R^3$.  Their proof uses the singular-integral geometry of strain and the magnitude-weighted direction inequality, independently of the evolution equation.  The Navier--Stokes dynamics enter when this universal geometric estimate is placed in the localized filtered enstrophy balance; see \cref{sec:balance}.
\end{remark}

\section{Strain representation and exact geometric depletion}\label{sec:geometry}

\subsection{The strain kernel}

We record the classical singular-integral representation used in geometric vorticity arguments.

\begin{lemma}[Calder\'on--Zygmund representation of strain]\label{lem:strain-kernel}
Let $U\in L^2(\R^3;\R^3)$ be divergence free and smooth, let $\Omega=\curl U$, and set
\[
\mathbb S=\frac12(\nabla U+\nabla U^T).
\]
Then
\begin{equation}\label{eq:strain-index}
\mathbb S_{ij}(x)
=
\pv\int_{\R^3}K_{ijm}(z)\,\Omega_m(x-z)\,dz,
\end{equation}
where
\begin{equation}\label{eq:K-index}
K_{ijm}(z)
=
\frac{3}{8\pi|z|^5}
\bigl(
 z_j\varepsilon_{ikm}z_k
+
 z_i\varepsilon_{jkm}z_k
\bigr).
\end{equation}
Equivalently,
\begin{equation}\label{eq:strain-vector}
\mathbb S(x)
=
\frac{3}{8\pi}\pv\int_{\R^3}
\frac{
(\widehat z\times\Omega(x-z))\otimes\widehat z
+
\widehat z\otimes(\widehat z\times\Omega(x-z))
}{|z|^3}\,dz.
\end{equation}
The kernel is homogeneous of degree $-3$, smooth away from the origin, and has zero spherical average.
\end{lemma}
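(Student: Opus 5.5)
The plan is to derive \eqref{eq:strain-index} from the Biot--Savart law by differentiating under the integral and carefully extracting the delta-function contribution at the origin. Since $U\in L^2$ is smooth and divergence free, $-\Delta U=\curl\Omega$, so
\[
U(x)=\frac1{4\pi}\int_{\R^3}\frac{(x-y)\times\Omega(y)}{|x-y|^3}\,dy
=-\frac1{4\pi}\int\nabla\frac1{|z|}\times\Omega(x-z)\,dz .
\]
Hence
\[
U_i(x)=-\frac1{4\pi}\int \varepsilon_{ikm}\,\partial_k\Gamma(z)\,\Omega_m(x-z)\,dz,
\qquad \Gamma(z)=\frac1{|z|}.
\]
The representation is justified first for $\Omega\in C_c^\infty$. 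For general smooth $L^2$ fields I would argue by approximation, or interpret it in the distributional/$L^2$ sense via Fourier multipliers. The case $\Omega=\curl U$ with $U\in L^2$ is covered because both sides are bounded $L^2\to\dot H^{-1}$-type operators that agree on a dense class.

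Next, differentiate in $x_j$. Moving the derivative onto the kernel gives $\partial_j U_i=-\frac1{4\pi}\,\varepsilon_{ikm}\,(\partial_j\partial_k\Gamma)*\Omega_m$, where $\partial_j\partial_k\Gamma$ is understood distributionally. The classical formula is
\[
\partial_j\partial_k\Gamma=\pv\frac{3z_jz_k-\delta_{jk}|z|^2}{|z|^5}-\frac{4\pi}{3}\delta_{jk}\delta_0 .
\]
Substituting this and using $\varepsilon_{ikm}\delta_{jk}=\varepsilon_{ijm}$ yields
\[
\partial_jU_i=-\frac{3}{4\pi}\pv\!\int\frac{\varepsilon_{ikm}z_jz_k}{|z|^5}\Omega_m(x-z)\,dz+\frac{1}{4\pi}\pv\!\int\frac{\varepsilon_{ijm}}{|z|^3}\Omega_m(x-z)\,dz+\frac13\varepsilon_{ijm}\Omega_m(x).
\]
Both the last two terms are antisymmetric in $(i,j)$, so they cancel in the symmetrization. (The middle term is not even absolutely defined as a p.v.\ integral, but it is antisymmetric, so it drops out.) To keep things rigorous I would therefore symmetrize before passing to the limit. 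Concretely, compute $\mathbb S_{ij}$ as the limit of integrals over $\{|z|>\delta\}$ after an integration by parts. The boundary term on the sphere $|z|=\delta$ is $\int_{|z|=\delta}\varepsilon_{ikm}\partial_k\Gamma\,\widehat z_j\,\Omega_m(x-z)\,dS$, which tends to $-\frac{4\pi}{3}\varepsilon_{ijm}\Omega_m(x)\cdot(\text{const})$. It is antisymmetric in $i,j$ and so vanishes in $\mathbb S$. This gives
\[
\mathbb S_{ij}=\frac{3}{8\pi}\pv\!\int\frac{z_j\varepsilon_{ikm}z_k+z_i\varepsilon_{jkm}z_k}{|z|^5}\Omega_m(x-z)\,dz
\]
with the sign fixed by the orientation. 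I expect the sign and orientation bookkeeping ($\varepsilon_{ikm}z_k$ versus $(z\times\Omega)_i$) to be the main point needing care: $(\widehat z\times\Omega)_i=\varepsilon_{ikm}\widehat z_k\Omega_m$, which matches \eqref{eq:strain-vector} directly once the overall sign from $-\nabla\Gamma=z/|z|^3$ is tracked.

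Finally, the kernel properties. Homogeneity of degree $-3$ and smoothness away from $0$ are immediate from \eqref{eq:K-index}. For the zero spherical average, the identity $\int_{S^2}\widehat z_j\widehat z_k\,dS=\frac{4\pi}{3}\delta_{jk}$ gives
\[
\int_{S^2}K_{ijm}\,dS\propto\varepsilon_{ijm}+\varepsilon_{jim}=0.
\]
This cancellation is exactly what makes the principal value converge for smooth bounded $\Omega$, and it legitimizes the $\delta\to0$ limit above. The near-field/far-field split then follows by inserting $\vartheta(z/\rho r)$. The cutoff is radial, so it preserves the cancellation and the p.v.\ in \eqref{eq:S-near-def} is well defined.
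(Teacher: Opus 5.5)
Your route is the paper's own: Biot--Savart law, differentiate, symmetrize so that the $\delta_{jk}$ and $\delta_0$ contributions (both proportional to $\varepsilon_{ijm}$) drop out, then approximate. Your treatment of the truncation at $|z|=\delta$ and of the zero spherical mean is sound. But the derivation as written proves the wrong sign.

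The law you start from, $U(x)=\frac1{4\pi}\int\frac{(x-y)\times\Omega(y)}{|x-y|^3}\,dy$, has the cross product in the wrong order. From $-\Delta U=\curl\Omega$ one gets $U=\curl(G*\Omega)$ with $G(z)=\frac{1}{4\pi|z|}$, hence
\[
U_i=+\frac1{4\pi}\,\varepsilon_{ikm}\,(\partial_k\Gamma)*\Omega_m,
\qquad\text{i.e.}\qquad
U(x)=\frac1{4\pi}\int_{\R^3}\Omega(y)\times\frac{x-y}{|x-y|^3}\,dy,
\]
which is the paper's formula. As a check, a vortex line along the $x_3$-axis carrying vorticity $+e_3$ must induce counterclockwise rotation seen from above; your formula gives clockwise rotation. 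Followed honestly, your display for $\partial_jU_i$ symmetrizes to $\mathbb S_{ij}=-\frac{3}{8\pi}\pv\int\cdots$, the negative of \eqref{eq:strain-index}. The phrase ``with the sign fixed by the orientation'' does not repair this; it just asserts the sign you want. The later depletion argument only uses $|\xi\cdot\mathbb S\xi|$, but the lemma states the sign, so this step fails as written.

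The fix is one line. With the correct law,
\[
\partial_jU_i=\frac1{4\pi}\,\varepsilon_{ikm}\,\pv\frac{3z_jz_k-\delta_{jk}|z|^2}{|z|^5}*\Omega_m-\frac13\,\varepsilon_{ijm}\Omega_m ,
\]
and symmetrizing in $(i,j)$ removes the $\delta_{jk}$ term and the local term pointwise. What remains is exactly the kernel $\frac{3}{8\pi}(z_j\varepsilon_{ikm}z_k+z_i\varepsilon_{jkm}z_k)|z|^{-5}$.

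Keep the combined kernel $(3z_jz_k-\delta_{jk}|z|^2)|z|^{-5}$, which has zero spherical mean, intact until after symmetrizing. Not only your middle term but also your first term fails to exist as a separate principal value: the angular part of $\varepsilon_{ikm}z_jz_k|z|^{-5}$ has spherical mean $\frac{4\pi}{3}\varepsilon_{ijm}\neq0$ when $i\neq j$. Your plan to symmetrize at the truncated level handles both terms.

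Lastly, your ``$L^2\to\dot H^{-1}$'' density remark gives the identity only in the sense of distributions. For the pointwise principal value it is cleaner to use that $\Omega_\ell\in L^2\cap C_b^\infty$ in the application. Then the truncated integrals converge pointwise (the kernel tail lies in $L^2$) and agree with the $L^2$-bounded Calder\'on--Zygmund operator.
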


\begin{proof}
For smooth divergence-free $U$, the Biot--Savart law gives
\[
U(x)=\frac1{4\pi}\int_{\R^3}
\Omega(y)\times\frac{x-y}{|x-y|^3}\,dy.
\]
Differentiating in $x$, symmetrizing, and observing that the antisymmetric Kronecker term cancels gives \eqref{eq:strain-index}. Formula \eqref{eq:strain-vector} is the invariant form of the same identity. The Calder\'on--Zygmund properties follow immediately from \eqref{eq:K-index}; see the classical theory \cite{CalderonZygmund1952}. A standard approximation argument extends the formula to smooth $L^2$ fields. See also \cite[Chapter 2]{MajdaBertozzi2002}.
\end{proof}

Because spatial filtering maps $L^2$ into $C_b^\infty$, \cref{lem:strain-kernel} applies to $U_\ell(\cdot,t)$ for almost every $t$.

\subsection{Direction contraction and cancellation}

\begin{lemma}[Exact direction contraction]\label{lem:direction-contraction}
Fix $(x,t)$ with $\Omega_\ell(x,t)\ne0$ and put $\xi=\xi_\ell(x,t)$. For every truncation parameter $\delta>0$,
\begin{align}
\xi\cdot\mathbb S_{\ell,\delta}^{\mathrm{near}}(x,t)\xi
={}&
\frac{3}{4\pi}
\int_{|z|>\delta}
\vartheta\!\left(\frac z{\rho r}\right)
\frac{(\xi\cdot\widehat z)
(\xi\times\widehat z)\cdot
\bigl(\xi_\ell(x-z,t)-\xi\bigr)}{|z|^3}
\nonumber\\
&\hspace{6em}\times
|\Omega_\ell(x-z,t)|\,dz,
\label{eq:exact-direction-contraction}
\end{align}
where $\mathbb S_{\ell,\delta}^{\mathrm{near}}$ denotes \eqref{eq:S-near-def} with the region $|z|\le\delta$ removed.
Moreover,
\begin{equation}\label{eq:angular-factor}
\frac{3}{4\pi}|\xi\cdot\widehat z|\,|\xi\times\widehat z|
\le \frac{3}{8\pi}.
\end{equation}
\end{lemma}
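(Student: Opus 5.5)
The proof is a pointwise algebraic identity applied under the truncated integral, so no singular-integral machinery is needed. Fix $(x,t)$ with $\Omega_\ell(x,t)\ne0$, put $\xi=\xi_\ell(x,t)$, and fix $\delta>0$. The first step is to note that the truncated near-field integral defining $\mathbb S_{\ell,\delta}^{\mathrm{near}}(x,t)$ converges absolutely. The integrand is supported in $\delta<|z|<\rho r$ by \eqref{eq:theta}, and $\Omega_\ell(\cdot,t)$ is bounded by \eqref{eq:Cphi-def}. Hence contracting with $\xi$ on both sides may be done inside the integral.

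Next I contract the tensor in \eqref{eq:S-near-def} against $\xi\otimes\xi$. For any vector $\eta$,
\[
\xi\cdot\bigl[(\widehat z\times\eta)\otimes\widehat z+\widehat z\otimes(\widehat z\times\eta)\bigr]\xi
=2(\xi\cdot\widehat z)\,\xi\cdot(\widehat z\times\eta).
\]
By the cyclic property of the scalar triple product, $\xi\cdot(\widehat z\times\eta)=\det(\xi,\widehat z,\eta)=(\xi\times\widehat z)\cdot\eta$.

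I then take $\eta=\Omega_\ell(x-z,t)$ and write it as $|\Omega_\ell(x-z,t)|\,\xi_\ell(x-z,t)$. This polar decomposition holds everywhere, including on the zero set: there $\xi_\ell=0$ by \eqref{eq:direction}, and both sides vanish. The prefactor becomes $2\cdot\frac3{8\pi}=\frac3{4\pi}$, and the integrand is
\[
\vartheta(z/\rho r)\,(\xi\cdot\widehat z)\,(\xi\times\widehat z)\cdot\xi_\ell(x-z,t)\,|\Omega_\ell(x-z,t)|\,|z|^{-3}.
\]
Finally I subtract $\xi$ from $\xi_\ell(x-z,t)$ at no cost, because $(\xi\times\widehat z)\cdot\xi=0$ pointwise. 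This gives \eqref{eq:exact-direction-contraction} exactly.

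The cancellation is pointwise in $z$: it comes from the orthogonality $\xi\times\widehat z\perp\xi$, not from the zero spherical average of the kernel. This is why the identity holds for every $\delta$ and needs no principal-value limit. The only points that need care are the absolute convergence of the truncated integral and the convention on the zero set of $\Omega_\ell$.

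For \eqref{eq:angular-factor}, let $\theta$ be the angle between the unit vectors $\xi$ and $\widehat z$. Then
\[
|\xi\cdot\widehat z|\,|\xi\times\widehat z|=|\cos\theta|\,|\sin\theta|=\tfrac12|\sin2\theta|\le\tfrac12,
\]
and multiplying by $\frac3{4\pi}$ gives the bound $\frac3{8\pi}$. I do not expect any of these steps to be a genuine obstacle.
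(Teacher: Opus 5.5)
Your proposal is correct and follows essentially the same route as the paper's proof. You contract the kernel with $\xi\otimes\xi$, use the triple-product orthogonality $(\xi\times\widehat z)\cdot\xi=0$ to subtract $\xi$ pointwise, and bound $|\cos\theta\sin\theta|\le\frac12$; your remarks on absolute convergence of the truncated integral and on the zero-set convention for $\xi_\ell(x-z,t)$ just make explicit details the paper leaves implicit.
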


\begin{proof}
Contracting \eqref{eq:strain-vector} with $\xi$ on both sides yields
\[
\xi\cdot\mathbb S_{\ell,\delta}^{\mathrm{near}}\xi
=
\frac{3}{4\pi}
\int_{|z|>\delta}
\vartheta\!\left(\frac z{\rho r}\right)
\frac{(\xi\cdot\widehat z)
\xi\cdot(\widehat z\times\Omega_\ell(x-z,t))}{|z|^3}\,dz.
\]
Write $\Omega_\ell(x-z,t)=|\Omega_\ell(x-z,t)|\xi_\ell(x-z,t)$. Since
\[
(\xi\times\widehat z)\cdot\xi=0,
\]
one may subtract $\xi$ exactly inside the last factor, proving \eqref{eq:exact-direction-contraction}. Inequality \eqref{eq:angular-factor} follows from
\[
|\xi\cdot\widehat z|\,|\xi\times\widehat z|
=|\cos\theta\sin\theta|\le\frac12.
\]
\end{proof}

\subsection{Proof of the geometric depletion theorem}

\begin{proof}[Proof of \cref{thm:geometric}]
At points where $\Omega_\ell(x,t)=0$, both sides of the pointwise stretching estimate below vanish. At points where $\Omega_\ell(x,t)\ne0$, \cref{lem:direction-contraction} gives, for every $\delta>0$,
\begin{align}
\bigl(
\mathbb S_{\ell,\delta}^{\mathrm{near}}\Omega_\ell\cdot\Omega_\ell
\bigr)_+
\le{}&
\frac{3}{8\pi}|\Omega_\ell(x,t)|^2
\int_{|z|>\delta}
\vartheta\!\left(\frac z{\rho r}\right)
\nonumber\\
&\times
\frac{
|\xi_\ell(x,t)-\xi_\ell(x-z,t)|
|\Omega_\ell(x-z,t)|
}{|z|^3}\,dz.
\label{eq:pointwise-geometric-truncated}
\end{align}
Multiply by $r\chi$ and integrate. The right-hand side is the truncated version of $\frac{3}{8\pi}\mathcal A_{r,\ell}^{\mathrm{pair}}[\chi]$. As $\delta\downarrow0$, the principal-value convergence theorem for Calder\'on--Zygmund operators gives almost-everywhere convergence on the left. Fatou's lemma for the nonnegative stretching integrand and monotone convergence for the truncated defect then give \eqref{eq:geometric-main}. If the limiting defect is infinite, the conclusion is understood in the extended sense and is immediate.
\end{proof}

\section{Direction increments and local difference quotients}\label{sec:difference}

\subsection{An algebraic direction inequality}

The zero set of $\Omega_\ell$ creates no difficulty because the magnitude weights in \eqref{eq:A-def} compensate for the singularity of the direction map.

\begin{lemma}[Magnitude-weighted direction difference]\label{lem:algebraic-direction}
For $a,b\in\R^3$, define
\[
\widehat a=
\begin{cases}a/|a|,&a\ne0,\\0,&a=0,\end{cases}
\qquad
\widehat b=
\begin{cases}b/|b|,&b\ne0,\\0,&b=0.\end{cases}
\]
Then
\begin{equation}\label{eq:min-direction}
\min\{|a|,|b|\}\,|\widehat a-\widehat b|
\le 2|a-b|.
\end{equation}
Consequently, if $|a|,|b|\le\Lambda$, then
\begin{equation}\label{eq:weighted-direction}
|a|^2|b|\,|\widehat a-\widehat b|
\le 2\Lambda|a|\,|a-b|.
\end{equation}
\end{lemma}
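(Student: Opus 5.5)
The proof splits into an elementary vector estimate for \eqref{eq:min-direction} and a short consequence for \eqref{eq:weighted-direction}.

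\emph{Degenerate case.} If $a=0$ or $b=0$, then $\min\{|a|,|b|\}=0$, so \eqref{eq:min-direction} holds trivially. The same vanishing disposes of \eqref{eq:weighted-direction}: if $a=0$ the left side vanishes, and if $b=0$ the factor $|b|$ vanishes.

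\emph{Main case of \eqref{eq:min-direction}.} Assume $a,b\ne0$ and, by symmetry, $|a|\le|b|$. Insert the intermediate vector $|a|\widehat b$ and use the triangle inequality:
\[
|a|\,|\widehat a-\widehat b|
=\bigl|a-|a|\widehat b\bigr|
\le |a-b|+\bigl|b-|a|\widehat b\bigr|
=|a-b|+\bigl(|b|-|a|\bigr).
\]
By the reverse triangle inequality, $|b|-|a|\le|a-b|$, which gives the constant $2$. This step is the only place where the choice of the smaller magnitude matters. It is what allows the bound to avoid any denominator near the zero set of $\Omega_\ell$.

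\emph{Consequence \eqref{eq:weighted-direction}.} Assume $|a|,|b|\le\Lambda$. Write $|a|^2|b|=|a|\,\bigl(|a||b|\bigr)$ and use
\[
|a|\,|b|=\min\{|a|,|b|\}\cdot\max\{|a|,|b|\}\le \Lambda\,\min\{|a|,|b|\}.
\]
This gives $|a|^2|b|\,|\widehat a-\widehat b|\le \Lambda|a|\,\min\{|a|,|b|\}\,|\widehat a-\widehat b|$. Then \eqref{eq:min-direction} yields the bound $2\Lambda|a|\,|a-b|$.

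I do not expect a genuine obstacle here. The only point requiring care is choosing the intermediate vector $|a|\widehat b$ built from the smaller magnitude, rather than normalizing both vectors, which would introduce division by small norms.
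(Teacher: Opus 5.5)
Your proof is correct and is essentially the paper's argument: the same intermediate vector $|a|\widehat b$ with the triangle and reverse triangle inequalities gives \eqref{eq:min-direction}, and your factorization $|a|\,|b|=\min\{|a|,|b|\}\max\{|a|,|b|\}\le\Lambda\min\{|a|,|b|\}$ is the paper's multiplication by $|a|^2|b|/\min\{|a|,|b|\}$, written so that no division by zero occurs in the degenerate case. One small correction to your commentary: restricting to the smaller magnitude is not essential, because the same step with the intermediate vector $|b|\widehat a$ (for $|a|\le|b|$) also gives $\max\{|a|,|b|\}\,|\widehat a-\widehat b|\le 2|a-b|$.
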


\begin{proof}
If either vector vanishes, \eqref{eq:min-direction} is immediate. Suppose that $0<|a|\le|b|$. Then
\begin{align*}
|a|\,|\widehat a-\widehat b|
&=|a-|a|\widehat b|\\
&\le |a-b|+|b-|a|\widehat b|\\
&=|a-b|+\bigl||b|-|a|\bigr|\\
&\le2|a-b|.
\end{align*}
The case $|b|\le|a|$ is symmetric. Multiplying \eqref{eq:min-direction} by $|a|^2|b|/\min\{|a|,|b|\}$ gives
\[
|a|^2|b|\,|\widehat a-\widehat b|
\le2|a|\max\{|a|,|b|\}|a-b|,
\]
which implies \eqref{eq:weighted-direction}.
\end{proof}

\subsection{A local difference-quotient operator}

For a vector field $f$ define
\begin{equation}\label{eq:N-def}
N_{\rho r}^{\vartheta}f(x)
:=
\int_{\R^3}
\vartheta\!\left(\frac z{\rho r}\right)
\frac{|f(x)-f(x-z)|}{|z|^3}\,dz.
\end{equation}

\begin{lemma}[Local $L^2$ difference-quotient estimate]\label{lem:difference-quotient}
If $f\in H^1(B_{(1+\rho)r};\R^m)$, then
\begin{equation}\label{eq:N-L2}
\|N_{\rho r}^{\vartheta}f\|_{L^2(B_r)}
\le
\rho r C_\vartheta
\|\nabla f\|_{L^2(B_{(1+\rho)r})}.
\end{equation}
\end{lemma}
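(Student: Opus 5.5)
We write each increment as a line integral of the gradient and then apply Minkowski's integral inequality in $L^2(B_r)$. By density of smooth functions in $H^1(B_{(1+\rho)r})$, and Fatou's lemma to pass to the limit in the nonnegative integrand of $N^\vartheta_{\rho r}f$, it suffices to treat $f$ smooth up to the boundary of $B_{(1+\rho)r}$.

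Since $\vartheta$ is supported in $B_1$, only $|z|<\rho r$ contributes to $N^\vartheta_{\rho r}f(x)$. For $x\in B_r$ and $s\in[0,1]$ the point $x-sz$ then lies in $B_{(1+\rho)r}$. So for such $x$ and $z$ the fundamental theorem of calculus gives
\[
|f(x)-f(x-z)|\le |z|\int_0^1|\nabla f(x-sz)|\,ds .
\]
Inserting this into \eqref{eq:N-def} yields the pointwise bound
\[
N_{\rho r}^{\vartheta}f(x)\le\int_{\R^3}\int_0^1\vartheta\!\left(\frac z{\rho r}\right)\frac{|\nabla f(x-sz)|}{|z|^2}\,ds\,dz,
\qquad x\in B_r.
\]

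Next take the $L^2(B_r)$ norm in $x$ and apply Minkowski's integral inequality to move the norm inside the $(z,s)$-integral. For each fixed $z$ with $|z|<\rho r$ and each $s\in[0,1]$, the translate $B_r-sz$ lies in $B_{(1+\rho)r}$, so
\[
\|\nabla f(\cdot-sz)\|_{L^2(B_r)}\le\|\nabla f\|_{L^2(B_{(1+\rho)r})}.
\]
This gives
\[
\|N_{\rho r}^{\vartheta}f\|_{L^2(B_r)}\le\|\nabla f\|_{L^2(B_{(1+\rho)r})}\int_{\R^3}\vartheta\!\left(\frac z{\rho r}\right)\frac{dz}{|z|^2}.
\]
The change of variables $z=\rho r\,w$ evaluates the remaining integral as $\rho r\int\vartheta(w)|w|^{-2}\,dw=\rho r\,C_\vartheta$, which is exactly \eqref{eq:N-L2}.

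The only point needing care is the geometric containment: the segment from $x$ to $x-z$, and every shifted ball $B_r-sz$, must stay inside $B_{(1+\rho)r}$. This holds precisely because $\supp\vartheta\subset B_1$ restricts to $|z|<\rho r$, and it is what keeps the right-hand side local to $B_{(1+\rho)r}$. The singular factor $|z|^{-3}$ is harmless once one power of $|z|$ is absorbed by the increment, since $|z|^{-2}$ is locally integrable in $\R^3$.
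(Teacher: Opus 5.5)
Your proof is correct and follows essentially the same route as the paper. Both arguments bound the increment by the gradient along segments (fundamental theorem of calculus), use Minkowski's integral inequality to move the $L^2(B_r)$ norm inside, use $\supp\vartheta\subset B_1$ to keep every translate inside $B_{(1+\rho)r}$, and rescale $z=\rho r\,w$ to obtain $\rho r\,C_\vartheta$.

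The differences are only cosmetic. The paper applies Minkowski in $z$ first and then bounds $\|f(\cdot)-f(\cdot-z)\|_{L^2(B_r)}\le|z|\,\|\nabla f\|_{L^2(B_{(1+\rho)r})}$, whereas you apply the fundamental theorem of calculus pointwise and Minkowski in $(z,s)$. You also spell out the density and Fatou justification for $H^1$ functions, which the paper leaves implicit.
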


\begin{proof}
By Minkowski's integral inequality,
\begin{align*}
\|N_{\rho r}^{\vartheta}f\|_{L^2(B_r)}
&\le
\int_{\R^3}
\vartheta\!\left(\frac z{\rho r}\right)|z|^{-3}
\|f(\cdot)-f(\cdot-z)\|_{L^2(B_r)}\,dz.
\end{align*}
For $|z|\le\rho r$, the fundamental theorem of calculus gives
\[
f(x)-f(x-z)
=\int_0^1 z\cdot\nabla f(x-sz)\,ds,
\]
and therefore
\[
\|f(\cdot)-f(\cdot-z)\|_{L^2(B_r)}
\le |z|\|\nabla f\|_{L^2(B_{(1+\rho)r})}.
\]
Substitution and the change of variables $z=\rho rw$ yield
\[
\|N_{\rho r}^{\vartheta}f\|_{L^2(B_r)}
\le
\left(
\int_{\R^3}\vartheta\!\left(\frac z{\rho r}\right)|z|^{-2}\,dz
\right)
\|\nabla f\|_{L^2(B_{(1+\rho)r})}
=
\rho rC_\vartheta\|\nabla f\|_2.
\]
\end{proof}

\begin{proposition}[Defect bound in terms of $\|\Omega_\ell\|_\infty$]\label{prop:A-Lambda}
Let
\begin{equation}\label{eq:Lambda-def}
\Lambda_{r,\ell}^{\rho}
:=
\|\Omega_\ell\|_{L^\infty(B_{(1+\rho)r}\times I_r)}.
\end{equation}
Then
\begin{equation}\label{eq:A-Lambda}
\mathcal A_{r,\ell}^{\mathrm{pair}}[\chi]
\le
2\rho C_\vartheta r^2\Lambda_{r,\ell}^{\rho}
\bigl(
\mathcal O_{r,\ell}[\chi]
\mathcal P_{r,\ell}^{\rho}
\bigr)^{1/2}.
\end{equation}
Consequently, for every $\eta>0$,
\begin{equation}\label{eq:A-Lambda-Young}
\mathcal A_{r,\ell}^{\mathrm{pair}}[\chi]
\le
\eta\mathcal P_{r,\ell}^{\rho}
+
\frac{\rho^2C_\vartheta^2}{\eta}
r^4(\Lambda_{r,\ell}^{\rho})^2
\mathcal O_{r,\ell}[\chi].
\end{equation}
\end{proposition}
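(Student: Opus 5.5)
We prove \eqref{eq:A-Lambda} by a pointwise application of \cref{lem:algebraic-direction}, followed by Cauchy--Schwarz and \cref{lem:difference-quotient}; \eqref{eq:A-Lambda-Young} then follows from Young's inequality.

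\emph{Pointwise reduction.} Fix $(x,t)$ with $x\in B_r$ and $t\in I_r$, and fix $z$ with $|z|\le\rho r$. These are the only $z$ seen by $\vartheta(z/\rho r)$, since $\supp\vartheta\subset B_1$. Then $x-z\in B_{(1+\rho)r}$, so both $|\Omega_\ell(x,t)|$ and $|\Omega_\ell(x-z,t)|$ are bounded by $\Lambda=\Lambda^\rho_{r,\ell}$. Apply \eqref{eq:weighted-direction} with $a=\Omega_\ell(x,t)$ and $b=\Omega_\ell(x-z,t)$; note $\widehat a=\xi_\ell(x,t)$ and $\widehat b=\xi_\ell(x-z,t)$ under convention \eqref{eq:direction}. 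This bounds the integrand of \eqref{eq:A-def} by $2\Lambda|\Omega_\ell(x,t)|\,|\Omega_\ell(x,t)-\Omega_\ell(x-z,t)|$. Integrating in $z$ against $\vartheta(z/\rho r)|z|^{-3}$, first at truncation $|z|>\delta$ and then letting $\delta\downarrow0$ by monotone convergence, gives
\[
\mathcal A^{\mathrm{pair}}_{r,\ell}[\chi]\le 2\Lambda\, r\iint_{Q_r}\chi\,|\Omega_\ell|\,N^\vartheta_{\rho r}\Omega_\ell\,dx\,dt .
\]

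\emph{Integration.} Since $0\le\chi\le1$, we have $\chi\le\chi^{1/2}$. For each fixed $t$, Cauchy--Schwarz in $x$ followed by \cref{lem:difference-quotient} gives
\[
\int_{B_r}\chi|\Omega_\ell|N^\vartheta_{\rho r}\Omega_\ell\,dx\le\Bigl(\int_{B_r}\chi|\Omega_\ell|^2\,dx\Bigr)^{1/2}\rho rC_\vartheta\|\nabla\Omega_\ell(t)\|_{L^2(B_{(1+\rho)r})}.
\]
The lemma applies because $\Omega_\ell(\cdot,t)$ is smooth and bounded with all derivatives on bounded sets, hence lies in $H^1(B_{(1+\rho)r})$. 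A second Cauchy--Schwarz, now in $t$, gives
\[
\mathcal A^{\mathrm{pair}}_{r,\ell}[\chi]\le 2\Lambda\rho C_\vartheta r^2\Bigl(\iint\chi|\Omega_\ell|^2\Bigr)^{1/2}\Bigl(\int_{I_r}\!\int_{B_{(1+\rho)r}}|\nabla\Omega_\ell|^2\Bigr)^{1/2}.
\]

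\emph{Tracking the powers of $r$.} By \eqref{eq:O-def} and \eqref{eq:P-def}, $\iint\chi|\Omega_\ell|^2=r\,\mathcal O_{r,\ell}[\chi]$ and $\iint|\nabla\Omega_\ell|^2=r^{-1}\mathcal P^\rho_{r,\ell}$. The product of the two square roots is therefore $(\mathcal O_{r,\ell}[\chi]\,\mathcal P^\rho_{r,\ell})^{1/2}$, with no leftover power of $r$, and the prefactor $2\Lambda\rho C_\vartheta r^2$ is exactly the one in \eqref{eq:A-Lambda}. This bookkeeping is the only point needing care; the factor $r^2$ is consistent because $r^4\Lambda^2$ is scale invariant.

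\emph{Young's inequality.} Write $X=2\rho C_\vartheta r^2\Lambda\,\mathcal O^{1/2}$ and $Y=\mathcal P^{1/2}$. Then $XY\le\eta Y^2+X^2/(4\eta)=\eta\mathcal P+\rho^2C_\vartheta^2r^4\Lambda^2\mathcal O/\eta$, which is \eqref{eq:A-Lambda-Young}.
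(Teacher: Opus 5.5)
Your proof is correct and matches the paper's argument. You apply the weighted direction inequality pointwise to reduce to $2r\Lambda\iint\chi|\Omega_\ell|\,N^{\vartheta}_{\rho r}\Omega_\ell$, then use Cauchy--Schwarz with the local difference-quotient lemma, and finish with Young's inequality. Your explicit bookkeeping of the powers of $r$, the truncation limit, and the $H^1$ applicability of the lemma only spells out steps the paper leaves implicit.
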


\begin{proof}
Apply \eqref{eq:weighted-direction} with
\[
a=\Omega_\ell(x,t),
\qquad
b=\Omega_\ell(x-z,t),
\qquad
\Lambda=\Lambda_{r,\ell}^{\rho}.
\]
Because $x\in B_r$ and $\vartheta(z/(\rho r))\ne0$ imply $x-z\in B_{(1+\rho)r}$, we obtain
\begin{align}
\mathcal A_{r,\ell}^{\mathrm{pair}}[\chi]
\le{}&
2r\Lambda_{r,\ell}^{\rho}
\iint_{Q_r}
\chi(x,t)|\Omega_\ell(x,t)|
N_{\rho r}^{\vartheta}\Omega_\ell(x,t)
\,dx\,dt.
\label{eq:A-before-Cauchy}
\end{align}
Cauchy--Schwarz in space-time and \cref{lem:difference-quotient} give
\begin{align*}
\mathcal A_{r,\ell}^{\mathrm{pair}}[\chi]
&\le
2\rho C_\vartheta r^2\Lambda_{r,\ell}^{\rho}
\left(\iint_{Q_r}\chi|\Omega_\ell|^2\right)^{1/2}
\left(\int_{I_r}\int_{B_{(1+\rho)r}}|\nabla\Omega_\ell|^2\right)^{1/2}\\
&=
2\rho C_\vartheta r^2\Lambda_{r,\ell}^{\rho}
\bigl(\mathcal O_{r,\ell}[\chi]\mathcal P_{r,\ell}^{\rho}\bigr)^{1/2}.
\end{align*}
This proves \eqref{eq:A-Lambda}. Young's inequality in the form $2ab\le\eta b^2+\eta^{-1}a^2$ yields \eqref{eq:A-Lambda-Young}.
\end{proof}

\section{Filter smoothing and proof of coercivity}\label{sec:coercivity}

\begin{lemma}[Local filtered vorticity bound]\label{lem:filter-smoothing}
Under \eqref{eq:rho-range} and \eqref{eq:M-def},
\begin{equation}\label{eq:Lambda-smoothing}
r^2\Lambda_{r,\ell}^{\rho}
\le
C_\varphi M_{r,\rho}(u)^{1/2}
\left(\frac r\ell\right)^{5/2}.
\end{equation}
\end{lemma}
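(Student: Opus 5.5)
The plan is to localize the velocity to the enlarged ball $B_{(1+2\rho)r}$, apply the filter constant \eqref{eq:Cphi-def} to the truncated field, and then read off the local energy bound \eqref{eq:M-def}. The key observation is that, because $\varphi\in C_c^\infty(B_1)$, the value $\Omega_\ell(x,t)=\curl(\varphi_\ell*u)(x,t)$ depends only on $u(\cdot,t)$ restricted to $B_\ell(x)$.

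First fix $t\in I_r$ outside the null set where the essential supremum in \eqref{eq:M-def} fails, and let $x\in B_{(1+\rho)r}$. By \eqref{eq:rho-range}, $\ell\le\rho r$, so $B_\ell(x)\subset B_{(1+2\rho)r}$. Set $f=u(\cdot,t)\mathbf 1_{B_{(1+2\rho)r}}\in L^2(\R^3;\R^3)$. Note that $f$ need not be divergence free; this does not matter, because \eqref{eq:Cphi-def} is stated for every $f\in L^2$. Since the kernel $\curl$-derivatives of $\varphi_\ell$ are supported in $B_\ell$, we get
\[
\Omega_\ell(x,t)=\curl(\varphi_\ell*f)(x).
\]
Applying \eqref{eq:Cphi-def} then gives
\[
|\Omega_\ell(x,t)|\le C_\varphi\ell^{-5/2}\|u(\cdot,t)\|_{L^2(B_{(1+2\rho)r})}\le C_\varphi\ell^{-5/2}\bigl(rM_{r,\rho}(u)\bigr)^{1/2}.
\]

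Next take the supremum. For a.e. $t$ the filtered field $\Omega_\ell(\cdot,t)$ is continuous, and the bound above is uniform in $x$ and holds for a.e. $t\in I_r$. Hence it bounds $\Lambda^\rho_{r,\ell}$ as defined in \eqref{eq:Lambda-def}. Multiplying by $r^2$ gives
\[
r^2\Lambda^\rho_{r,\ell}\le C_\varphi M_{r,\rho}(u)^{1/2}\,r^{5/2}\ell^{-5/2},
\]
which is exactly \eqref{eq:Lambda-smoothing}.

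The only delicate point is the localization step. One must check that the support radius of the filter, $\ell\le\rho r$, added to the enlarged radius $(1+\rho)r$, stays inside the energy ball $(1+2\rho)r$; this is precisely what the choice of radii in \eqref{eq:M-def} and \eqref{eq:P-def} arranges. One must also make sure the identity $\Omega_\ell=\curl(\varphi_\ell*f)$ is applied pointwise, rather than as a global $L^2$ estimate on all of $\R^3$. Everything else is bookkeeping of the time null set.
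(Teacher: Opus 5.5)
Your proof is correct and follows the paper's argument. For $x\in B_{(1+\rho)r}$ and a.e.\ $t$, you use $\supp\varphi_\ell\subset B_\ell$ and $\ell\le\rho r$ to see that $\Omega_\ell(x,t)$ only sees $u(\cdot,t)$ on $B_{(1+2\rho)r}$, apply the filter bound \eqref{eq:Cphi-def}, and insert $\|u(\cdot,t)\|_{L^2(B_{(1+2\rho)r})}\le (rM_{r,\rho}(u))^{1/2}$. Your explicit truncation $f=u(\cdot,t)\mathbf 1_{B_{(1+2\rho)r}}$ just makes precise the paper's phrase ``Young's inequality on the local convolution region,'' and it is the right way to invoke \eqref{eq:Cphi-def}, which is stated for $L^2$ fields on all of $\R^3$.
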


\begin{proof}
For $x\in B_{(1+\rho)r}$ and almost every $t\in I_r$,
\[
\Omega_\ell(x,t)=\curl(\varphi_\ell*u)(x,t).
\]
Since $\supp\varphi_\ell\subset B_\ell$ and $\ell\le\rho r$, Young's inequality on the local convolution region gives
\begin{align*}
|\Omega_\ell(x,t)|
&\le
C_\varphi\ell^{-5/2}
\|u(\cdot,t)\|_{L^2(B_{(1+2\rho)r})}\\
&\le
C_\varphi\ell^{-5/2}
M_{r,\rho}(u)^{1/2}r^{1/2}.
\end{align*}
Multiplying by $r^2$ gives \eqref{eq:Lambda-smoothing}.
\end{proof}

\begin{proof}[Proof of \cref{thm:coercivity}]
Insert \eqref{eq:Lambda-smoothing} into \eqref{eq:A-Lambda-Young}. Since
\[
r^4(\Lambda_{r,\ell}^{\rho})^2
\le
C_\varphi^2M_{r,\rho}(u)
\left(\frac r\ell\right)^5,
\]
we obtain \eqref{eq:coercivity-main}.
\end{proof}

\subsection{Scaling check}

For completeness, under the parabolic rescaling centered at $z_0$,
\[
u^{(r)}(y,s)=r u(x_0+ry,t_0+r^2s),
\qquad
p^{(r)}(y,s)=r^2p(x_0+ry,t_0+r^2s),
\]
the filtered vorticity satisfies
\[
\Omega_{\ell}^{(r)}(y,s)
=r^2\Omega_\ell(x_0+ry,t_0+r^2s),
\]
with relative filter length $\sigma=\ell/r$. Direct substitution shows that $\mathcal A$, $\mathcal P$, $\mathcal O$, and $M_{r,\rho}$ are dimensionless. Hence the only parameter generated by the filter is the dimensionless ratio $r/\ell$.

\section{Localized filtered enstrophy balance}\label{sec:balance}

We now place the universal near-field coercivity estimate inside the Navier--Stokes dynamics.  The exact filtered vorticity identity supplies a balance in which geometric depletion, diffusion, commutator forcing, and localization appear as separate terms.

Let $(u,p)$ be a whole-space suitable weak solution of \eqref{eq:NS} on a time interval containing $I_r$. Define
\begin{equation}\label{eq:filtered-pressure-stress}
P_\ell=\varphi_\ell*p,
\qquad
R_\ell=\varphi_\ell*(u\otimes u)-U_\ell\otimes U_\ell.
\end{equation}
Spatial filtering gives
\begin{equation}\label{eq:filtered-momentum}
\partial_tU_\ell-\Delta U_\ell
+\diver(U_\ell\otimes U_\ell)+\nabla P_\ell
=-\diver R_\ell,
\qquad
\diver U_\ell=0.
\end{equation}
Taking curl yields
\begin{equation}\label{eq:filtered-vorticity}
\partial_t\Omega_\ell-\Delta\Omega_\ell
+(U_\ell\cdot\nabla)\Omega_\ell
=(\Omega_\ell\cdot\nabla)U_\ell+F_\ell,
\qquad
F_\ell:=-\curl\diver R_\ell.
\end{equation}
Because the antisymmetric part of $\nabla U_\ell$ does not contribute,
\begin{equation}\label{eq:stretching-strain}
(\Omega_\ell\cdot\nabla)U_\ell\cdot\Omega_\ell
=\mathbb S_\ell\Omega_\ell\cdot\Omega_\ell.
\end{equation}

\begin{proposition}[Localized filtered enstrophy identity]\label{prop:enstrophy-identity}
Let $s_0<s_1$ with $[s_0,s_1]\subset I_r$, and let
$\chi\in C^\infty(\R^3\times[s_0,s_1])$ be nonnegative and compactly supported in the spatial variable. Then
\begin{equation}\label{eq:enstrophy-identity}
\mathcal E_\chi(s_1)-\mathcal E_\chi(s_0)
+\mathcal P_\chi
=
\mathcal V_\chi^{\mathrm{near}}
+\mathcal V_\chi^{\mathrm{rem}}
+\mathcal R_\chi
+\mathcal L_\chi,
\end{equation}
where
\begin{align}
\mathcal E_\chi(t)
&:=\frac r2\int_{\R^3}\chi(x,t)|\Omega_\ell(x,t)|^2\,dx,
\label{eq:Echi}\\
\mathcal P_\chi
&:=r\int_{s_0}^{s_1}\int_{\R^3}\chi|\nabla\Omega_\ell|^2\,dx\,dt,
\label{eq:Pchi}\\
\mathcal V_\chi^{\mathrm{near}}
&:=r\int_{s_0}^{s_1}\int_{\R^3}
\chi\,\mathbb S_\ell^{\mathrm{near}}\Omega_\ell\cdot\Omega_\ell\,dx\,dt,
\label{eq:Vchi-near}\\
\mathcal V_\chi^{\mathrm{rem}}
&:=r\int_{s_0}^{s_1}\int_{\R^3}
\chi\,\mathbb S_\ell^{\mathrm{rem}}\Omega_\ell\cdot\Omega_\ell\,dx\,dt,
\label{eq:Vchi-rem}\\
\mathcal R_\chi
&:=r\int_{s_0}^{s_1}\int_{\R^3}\chi F_\ell\cdot\Omega_\ell\,dx\,dt,
\label{eq:Rchi}\\
\mathcal L_\chi
&:=\frac r2\int_{s_0}^{s_1}\int_{\R^3}
|\Omega_\ell|^2
(\partial_t\chi+\Delta\chi+U_\ell\cdot\nabla\chi)
\,dx\,dt.
\label{eq:Lchi}
\end{align}
\end{proposition}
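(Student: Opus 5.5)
The plan is to test the filtered vorticity equation \eqref{eq:filtered-vorticity} against $r\chi\Omega_\ell$ and integrate over $\R^3\times[s_0,s_1]$. First I will check regularity. Since $u\in L^\infty_tL^2_x$ and $p$ is in the suitable-weak class (so $p\in L^{3/2}_{t,x}$ locally, and in the whole-space setting $p$ is given by Riesz transforms of $u\otimes u$), spatial filtering makes $U_\ell,\Omega_\ell,P_\ell,R_\ell$ smooth in $x$. All their spatial derivatives are bounded, uniformly for a.e.\ $t$ or in $L^1_t$, as in \cref{lem:filter-smoothing}. Equation \eqref{eq:filtered-momentum} holds in the distributional sense in $t$. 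It follows that $\partial_t\Omega_\ell\in L^1(s_0,s_1;C^k_{\mathrm{loc}})$, and $t\mapsto\Omega_\ell(\cdot,t)$ is absolutely continuous into $C^k_{\mathrm{loc}}$ after modification on a null set. This justifies the time chain rule
\[
\frac{d}{dt}\int\chi\frac{|\Omega_\ell|^2}{2}=\int\partial_t\chi\frac{|\Omega_\ell|^2}{2}+\int\chi\,\partial_t\Omega_\ell\cdot\Omega_\ell
\]
in the integrated form, with $\mathcal E_\chi(s_i)$ understood for the continuous representative.

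Next comes the term-by-term computation with $\chi$ compactly supported in space, so that no boundary terms arise.

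\emph{Diffusion.} Integrating by parts twice, $-\int\chi\Delta\Omega_\ell\cdot\Omega_\ell=\int\chi|\nabla\Omega_\ell|^2-\frac12\int\Delta\chi|\Omega_\ell|^2$.

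\emph{Transport.} Using $\diver U_\ell=0$, $\int\chi(U_\ell\cdot\nabla)\Omega_\ell\cdot\Omega_\ell=-\frac12\int|\Omega_\ell|^2U_\ell\cdot\nabla\chi$.

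\emph{Stretching.} By \eqref{eq:stretching-strain} this term is $\int\chi\mathbb S_\ell\Omega_\ell\cdot\Omega_\ell$. Splitting $\mathbb S_\ell=\mathbb S_\ell^{\mathrm{near}}+\mathbb S_\ell^{\mathrm{rem}}$ by \eqref{eq:S-rem-def} gives $\mathcal V^{\mathrm{near}}_\chi+\mathcal V^{\mathrm{rem}}_\chi$.

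\emph{Forcing.} The forcing term is exactly $\mathcal R_\chi$.

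Multiplying by $r$ and collecting the $\partial_t\chi$, $\Delta\chi$ and $U_\ell\cdot\nabla\chi$ terms produces $\mathcal L_\chi$ with the stated signs, which yields \eqref{eq:enstrophy-identity}.

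The main obstacle is making each term finite and the near/remainder split meaningful. I must show that $\mathbb S^{\mathrm{near}}_\ell$, which is defined as a principal value, is a bounded function on $\supp\chi$. Because $\Omega_\ell(\cdot,t)$ is smooth, I would write the principal value as $\int\vartheta(z/\rho r)K(z)\bigl(\Omega_\ell(x-z)-\Omega_\ell(x)\bigr)\,dz$, using the zero spherical average from \cref{lem:strain-kernel} and the radial symmetry of $\vartheta$. This gives $|\mathbb S^{\mathrm{near}}_\ell|\lesssim\rho r\|\nabla\Omega_\ell\|_\infty$. Then $\mathbb S^{\mathrm{rem}}_\ell=\mathbb S_\ell-\mathbb S^{\mathrm{near}}_\ell$ is also bounded, since $\mathbb S_\ell$ is. Integrability in time of all terms follows from the $L^\infty_tL^2_x$ bounds on $u$ through the filter, together with $R_\ell\in L^\infty_t$ with smooth spatial derivatives, which holds because $u\otimes u\in L^\infty_tL^1_x$ and $\ell^{-3}\varphi$ maps $L^1$ to $C^k$. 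The pressure does not enter the vorticity equation, so only distributional time regularity of $P_\ell$ is needed to pass from \eqref{eq:filtered-momentum} to \eqref{eq:filtered-vorticity}.
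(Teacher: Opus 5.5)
Your proposal is correct and follows the paper's proof. You test \eqref{eq:filtered-vorticity} against $r\chi\Omega_\ell$, integrate by parts in space and time, use $\diver U_\ell=0$ for the transport term and \eqref{eq:stretching-strain} with \eqref{eq:S-rem-def} for the stretching split, and the signs collect exactly into $\mathcal L_\chi$. The differences are only technical. Where the paper invokes an extra time mollification, you justify the time chain rule directly from the bound on $\partial_t\Omega_\ell$; this bound in fact holds in $L^\infty_t$ with values in bounded smooth functions, since the pressure drops out under the curl. You also check, via the zero spherical mean of the kernel, that $\mathbb S_\ell^{\mathrm{near}}$ and hence $\mathbb S_\ell^{\mathrm{rem}}$ are bounded, which the paper leaves implicit.
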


\begin{proof}
Multiply \eqref{eq:filtered-vorticity} by $r\chi\Omega_\ell$ and integrate over $\R^3\times(s_0,s_1)$. Integration by parts in time and space gives the endpoint term and \eqref{eq:Lchi}; incompressibility gives the transport contribution $\frac r2\int|\Omega_\ell|^2U_\ell\cdot\nabla\chi$. Formula \eqref{eq:stretching-strain} and the decomposition \eqref{eq:S-rem-def} give the two stretching terms. For a suitable weak solution, the identity follows rigorously by an additional time mollification and passage to the limit.
\end{proof}

To display the exact localization cost created by the enlarged diffusion region, assume now that $0\le\chi\le1$ and $\supp\chi\subset B_r\times(s_0,s_1)$. Define
\begin{equation}\label{eq:annular-diffusion}
\mathcal L_{\mathrm{ann}}
:=
\mathcal P_{r,\ell}^{\rho}-\mathcal P_\chi
\ge0,
\end{equation}
where $\mathcal P_{r,\ell}^{\rho}$ is restricted to the time interval $(s_0,s_1)$ in this section. Also define
\begin{equation}\label{eq:Vrem-positive}
\mathcal V_\chi^{+,\mathrm{rem}}
:=r\int_{s_0}^{s_1}\int_{\R^3}
\chi\bigl(\mathbb S_\ell^{\mathrm{rem}}\Omega_\ell\cdot\Omega_\ell\bigr)_+
\,dx\,dt.
\end{equation}

\begin{corollary}[Coercive near-field insertion]\label{cor:balance-insertion}
Under the assumptions above, for every $\eps\in(0,1)$,
\begin{align}
\mathcal E_\chi(s_1)+\eps\mathcal P_\chi
\le{}&
\mathcal E_\chi(s_0)
+
C_{\eps,\vartheta,\varphi}\rho^2M_{r,\rho}(u)
\left(\frac r\ell\right)^5
\mathcal O_{r,\ell}[\chi]
\nonumber\\
&+
(1-\eps)\mathcal L_{\mathrm{ann}}
+
\mathcal V_\chi^{+,\mathrm{rem}}
+
|\mathcal R_\chi|
+
|\mathcal L_\chi|.
\label{eq:balance-coercive}
\end{align}
\end{corollary}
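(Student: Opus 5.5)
We start from the exact identity of Proposition~\ref{prop:enstrophy-identity} and bound each term on its right-hand side from above by its positive part. The near-field term is the only one needing real work. Pointwise $\mathbb S_\ell^{\mathrm{near}}\Omega_\ell\cdot\Omega_\ell\le(\mathbb S_\ell^{\mathrm{near}}\Omega_\ell\cdot\Omega_\ell)_+$, and likewise for the remainder. Hence, since $\chi\ge0$,
\[
\mathcal V_\chi^{\mathrm{near}}\le\mathcal V_{r,\ell}^{+,\mathrm{near}}[\chi],
\qquad
\mathcal V_\chi^{\mathrm{rem}}\le\mathcal V_\chi^{+,\mathrm{rem}},
\]
where $\mathcal V^{+,\mathrm{near}}_{r,\ell}[\chi]$ is read with time restricted to $(s_0,s_1)$. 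The restriction is harmless because $\chi$ is supported in $B_r\times(s_0,s_1)\subset Q_r$. The residual terms are bounded trivially: $\mathcal R_\chi\le|\mathcal R_\chi|$ and $\mathcal L_\chi\le|\mathcal L_\chi|$.

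Next we apply \cref{cor:stretching-coercivity} to the localized cutoff $\chi$. Its hypotheses are those of \cref{thm:coercivity}, which are purely kinematic and hold for a.e.\ time slice, so it can be used verbatim on the subinterval $(s_0,s_1)$. This gives
\[
\mathcal V_{r,\ell}^{+,\mathrm{near}}[\chi]\le(1-\eps)\mathcal P_{r,\ell}^{\rho}
+C_{\eps,\vartheta,\varphi}\rho^2M_{r,\rho}(u)\left(\frac r\ell\right)^5\mathcal O_{r,\ell}[\chi].
\]
Two points need checking. First, $\mathcal O_{r,\ell}[\chi]$ is unaffected by the time restriction, since it is already weighted by $\chi$. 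Second, $M_{r,\rho}(u)$ over the subinterval is at most its value over $I_r$. The main bookkeeping point is the time-restricted convention for $\mathcal P^\rho_{r,\ell}$ adopted in this section. With that convention, \eqref{eq:annular-diffusion} gives
\[
(1-\eps)\mathcal P_{r,\ell}^{\rho}=(1-\eps)\mathcal P_\chi+(1-\eps)\mathcal L_{\mathrm{ann}}.
\]
The nonnegativity $\mathcal L_{\mathrm{ann}}\ge0$ follows from $0\le\chi\le1$ and $\supp\chi\subset B_r\subset B_{(1+\rho)r}$.

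Finally we assemble the pieces. Substituting into \eqref{eq:enstrophy-identity} gives
\[
\mathcal E_\chi(s_1)-\mathcal E_\chi(s_0)+\mathcal P_\chi\le(1-\eps)\mathcal P_\chi+(1-\eps)\mathcal L_{\mathrm{ann}}+C\,\mathcal O+\mathcal V_\chi^{+,\mathrm{rem}}+|\mathcal R_\chi|+|\mathcal L_\chi|.
\]
Moving $(1-\eps)\mathcal P_\chi$ to the left leaves $\eps\mathcal P_\chi$, and moving $\mathcal E_\chi(s_0)$ to the right yields \eqref{eq:balance-coercive}.

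The only genuine obstacle is the absorption step, which requires $\mathcal P_\chi<\infty$. This holds because $\Omega_\ell$ is spatially smooth and, for a suitable weak solution, $\nabla\Omega_\ell=\nabla\curl\varphi_\ell*u$ is controlled by $\ell^{-2}\|\nabla u\|_{L^2}$, which lies in $L^2_t$. If the right-hand side is infinite, the inequality is trivial in the extended sense.
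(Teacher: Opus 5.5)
Your proposal is correct and follows the paper's argument: bound $\mathcal V_\chi^{\mathrm{near}}$ by $\mathcal V_{r,\ell}^{+,\mathrm{near}}[\chi]$, apply \cref{cor:stretching-coercivity}, split $\mathcal P_{r,\ell}^{\rho}=\mathcal P_\chi+\mathcal L_{\mathrm{ann}}$, insert into \eqref{eq:enstrophy-identity}, and bound the remaining signed terms by their positive parts or absolute values. Your extra checks on the time restriction and on $\mathcal P_\chi<\infty$ before absorbing are bookkeeping the paper leaves implicit; the scaling you quote there is slightly off (one has $\|\nabla\Omega_\ell\|_{L^2}\lesssim\ell^{-1}\|\nabla u\|_{L^2}$ or $\lesssim\ell^{-2}\|u\|_{L^2}$, not $\ell^{-2}\|\nabla u\|_{L^2}$), but finiteness still holds, so the conclusion is unaffected.
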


\begin{proof}
From \cref{cor:stretching-coercivity},
\[
\mathcal V_\chi^{\mathrm{near}}
\le
\mathcal V_{r,\ell}^{+,\mathrm{near}}[\chi]
\le
(1-\eps)\mathcal P_{r,\ell}^{\rho}
+
C_{\eps,\vartheta,\varphi}\rho^2M_{r,\rho}(u)
\left(\frac r\ell\right)^5
\mathcal O_{r,\ell}[\chi].
\]
Use \eqref{eq:annular-diffusion} to write
\[
\mathcal P_{r,\ell}^{\rho}
=\mathcal P_\chi+\mathcal L_{\mathrm{ann}},
\]
and insert the result into \eqref{eq:enstrophy-identity}. Bound the remaining signed terms by their positive or absolute parts.
\end{proof}

\begin{remark}[Closure of the singular near-field term]
Estimate \eqref{eq:balance-coercive} absorbs positive singular near-field stretching into diffusion, up to a lower-order filtered-enstrophy reservoir and the explicit annular diffusion budget.  All other signed work is assigned to the named quantities $\mathcal V_\chi^{+,\mathrm{rem}}$, $\mathcal R_\chi$, and $\mathcal L_\chi$, representing far-field strain, commutator forcing, and cutoff/transport residual.  Sections~\ref{sec:farfield} and \ref{sec:comm} sharpen the first two terms, while Proposition~\ref{prop:adjoint-residual} identifies an exact cancellation of the principal cutoff residual.
\end{remark}

\begin{proposition}[Adjoint cancellation of the localization residual]\label{prop:adjoint-residual}
Assume that the filtered velocity $U_\ell$ is fixed and that $\chi$ is a smooth nonnegative solution of the backward adjoint drift-diffusion equation
\begin{equation}\label{eq:adjoint-cutoff}
\partial_t\chi+\Delta\chi+U_\ell\cdot\nabla\chi=0
\end{equation}
on $\mathbb R^3\times(s_0,s_1)$, or on $\mathbb T^3\times(s_0,s_1)$.  Then the localization residual in Proposition~\ref{prop:enstrophy-identity} vanishes:
\begin{equation}\label{eq:adjoint-residual-zero}
\mathcal L_\chi=0.
\end{equation}
Equivalently, one may prescribe terminal data at $s_1$ and solve the backward equation $-\partial_t\chi-\Delta\chi-U_\ell\cdot\nabla\chi=0$.  If $0\le \chi(\cdot,s_1)\le1$, then $0\le\chi\le1$ by the maximum principle.
\end{proposition}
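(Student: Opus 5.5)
The claim follows directly from the definition of $\mathcal L_\chi$ in \eqref{eq:Lchi}. Its integrand is $|\Omega_\ell|^2$ times exactly the operator $\partial_t\chi+\Delta\chi+U_\ell\cdot\nabla\chi$. Under \eqref{eq:adjoint-cutoff} this factor vanishes identically on the time slab, so the integrand is zero pointwise and $\mathcal L_\chi=0$. Before drawing that conclusion, I would check that the identity of Proposition~\ref{prop:enstrophy-identity} still holds for such a $\chi$. That proposition assumed spatially compact support, whereas a solution of \eqref{eq:adjoint-cutoff} on $\R^3$ will generally not be compactly supported.

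The rigorous plan therefore has three steps.
\begin{enumerate}
\item On $\mathbb T^3$ there are no boundary terms. The integrations by parts in the proof of Proposition~\ref{prop:enstrophy-identity} go through verbatim, using $\diver U_\ell=0$ to move the transport term onto $\chi$.
\item On $\R^3$ I would use that $\Omega_\ell(\cdot,t)\in C_b^\infty\cap H^k$ for a.e.\ $t$, because filtering maps $L^2$ into every Sobolev space. It then suffices that $\chi$, $\nabla\chi$ and $\partial_t\chi$ be bounded, which holds for bounded smooth solutions of the backward parabolic equation with smooth bounded drift $U_\ell$.
\item The integration by parts is justified by a spatial cutoff $\psi_R(x)=\psi(x/R)$. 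I would apply Proposition~\ref{prop:enstrophy-identity} to $\chi\psi_R$ and let $R\to\infty$. The extra terms carry $\nabla\psi_R=O(R^{-1})$ and $\Delta\psi_R=O(R^{-2})$ against integrable quantities, namely $|\Omega_\ell|^2$, $|\Omega_\ell|^2|U_\ell|$ and $|\Omega_\ell||\nabla\Omega_\ell|$, all in $L^1$ of the slab. Hence these terms vanish by dominated convergence.
\end{enumerate}
After this limit, the residual $\mathcal L_{\chi\psi_R}$ converges to $\mathcal L_\chi$, which is zero pointwise.

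The ``equivalently'' clause is a reformulation. Setting $\tau=s_1-t$ turns the backward equation into a forward drift-diffusion equation $\partial_\tau\chi=\Delta\chi+U_\ell\cdot\nabla\chi$ with initial data $\chi(\cdot,s_1)$. Smoothness of $U_\ell$ gives a unique bounded smooth solution. The bounds $0\le\chi\le1$ come from the parabolic maximum principle for operators with no zeroth-order term: constants are solutions, and comparison with $0$ and $1$ gives the claim. On $\R^3$ this needs the Phragmén--Lindelöf form of the principle for bounded solutions, which applies because the drift $U_\ell$ is bounded.

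The main obstacle is the decay and justification step on $\R^3$, not the cancellation itself. One must make sure the solution $\chi$ has enough regularity and boundedness, and that the time-regularity of $\Omega_\ell$ for a suitable weak solution allows the time integration by parts. For the latter I would reuse the time-mollification argument already invoked in the proof of Proposition~\ref{prop:enstrophy-identity}. Since $\chi$ is smooth, it commutes harmlessly with that mollification up to errors that vanish in the limit.
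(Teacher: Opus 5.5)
Your proposal is correct and follows the paper's proof, which simply notes that the integrand of $\mathcal L_\chi$ in \eqref{eq:Lchi} vanishes identically under \eqref{eq:adjoint-cutoff} and then appeals to the maximum principle for the backward equation. Your extra steps are sound refinements of points the paper leaves implicit: the cutoff argument extending Proposition~\ref{prop:enstrophy-identity} to non-compactly supported $\chi$ on $\R^3$, and the bounded-solution (Phragm\'en--Lindel\"of) maximum principle, which needs only a bounded drift and no zeroth-order term rather than the divergence-free property the paper cites.
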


\begin{proof}
By definition,
\[
\mathcal L_\chi=\frac r2\int_{s_0}^{s_1}\!\int |\Omega_\ell|^2
(\partial_t\chi+\Delta\chi+U_\ell\cdot\nabla\chi)\,dx\,dt.
\]
Equation \eqref{eq:adjoint-cutoff} makes the integrand vanish.  The maximum principle applies because $U_\ell$ is smooth in space and divergence free after filtering.
\end{proof}

\begin{remark}[Localization architecture]\label{rem:localization-residual-scope}
Proposition~\ref{prop:adjoint-residual} closes the principal cutoff residual in the exact whole-space filtering setting through a solution-adapted adjoint weight.  The shell terms generated by integration by parts in the commutator term remain as explicit nonnegative localization budgets.  A purely local-cylinder realization naturally adds a solenoidal extension or localized filtering procedure, together with its extension and filter-transition budgets; these terms form a separate localization module.
\end{remark}

\section{Dyadic weighted formulation}\label{sec:dyadic}

We now pass from a one-scale coercivity estimate to a finite-chain statement.  This section is formulated in the exact whole-space setting, where no extension residual is present.  The local interior case requires the extension and filter-transition terms described in \cref{sec:discussion}.

Fix a base cylinder with radius normalized to $r_0=1$ and let
\[
r_k=2^{-k},\qquad \ell_k=\sigma r_k,
\qquad 0<\sigma<1,
\qquad k=0,1,\dots,N.
\]
Let $Q_k=Q_{r_k}(z_0)$ and choose smooth cutoffs $\chi_k$ obtained by parabolic rescaling of one fixed profile.  Write
\[
U_k=U_{\ell_k},\qquad
\Omega_k=\Omega_{\ell_k},\qquad
R_k=R_{\ell_k}=S_{\ell_k}(u\otimes u)-U_k\otimes U_k.
\]
We use the normalized quantities
\[
P_k:=r_k\iint_{Q_k}\chi_k|\nabla\Omega_k|^2,
\qquad
\mathcal O_k:=r_k^{-1}\iint_{Q_k}\chi_k|\Omega_k|^2,
\]
\[
V_k^+:=r_k\iint_{Q_k}\chi_k(\mathbb S_{\ell_k}\Omega_k\cdot\Omega_k)_+.
\]
Let
\[
V_k^{+,\mathrm{near}},\qquad V_k^{+,\mathrm{far}}
\]
denote the contributions after splitting the strain kernel at distance $\rho r_k$.  The commutator forcing is
\begin{equation}\label{eq:Fcomm-def}
F_k^{\mathrm{com}}
:=
r_k\left|
\iint_{Q_k}\chi_k\,
\Omega_k\cdot\bigl(\nabla\times\nabla\!\cdot R_k\bigr)
\right|.
\end{equation}
The localization residual term $L_k$ denotes the sum of the cutoff residual from the filtered enstrophy identity, the annular diffusion cost generated by enlarged cutoffs, and the shell terms from the commutator insertion.  By definition, $L_k$ contains only nonnegative localization budgets and no coercive contribution.

For $\alpha>0$ define the weighted sequence class
\begin{equation}\label{eq:Walpha-def}
\mathcal W_\alpha
:=
\left\{w=(w_k)_{k\ge0}: w_k\ge0,
\quad
\|w\|_{\mathcal W_\alpha}:=
\sum_{k=0}^{\infty}2^{\alpha k}w_k<\infty
\right\}.
\end{equation}
Thus $w_k=2^{-\beta k}$ belongs to $\mathcal W_\alpha$ precisely when $\beta>\alpha$.

Let
\begin{equation}\label{eq:ME-def}
M_E:=\esssup_{t}\|u(t)\|_{L^2(\R^3)}^2
\end{equation}
in the whole-space case.  The estimates below are written with $M_E$ to emphasize that this part of the paper uses only energy-level information.

\section{Far-field strain packing and annular reassignment}\label{sec:farfield}

The near-field term is controlled by \cref{cor:stretching-coercivity}.  The far-field term has a complementary geometry: vorticity outside the core generates a slowly varying external strain on the smaller cylinder.  A direct energy-level treatment gives a robust weighted packing estimate from the Calder\'on--Zygmund tail and the $L^2$-to-$L^\infty$ filter bound.  Annular reassignment then resolves this tail into interactions between coarse vorticity reservoirs and core enstrophy profiles, producing a discrete Carleson embedding structure.  Finally, the harmonic-jet expansion identifies the low-order affine modes which should cancel, decorrelate, or be summably packed in an unweighted theory.

\begin{lemma}[Far-field tail kernel]\label{lem:tail-kernel}
Let $K$ be any component of the three-index strain kernel in \cref{lem:strain-kernel}.  Then
\begin{equation}\label{eq:tail-kernel}
\|K\mathbf 1_{|z|>\rho r}\|_{L^2(\R^3)}
\le C_K\rho^{-3/2}r^{-3/2},
\end{equation}
where $C_K$ is universal.
\end{lemma}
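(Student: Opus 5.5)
The plan is a direct computation in polar coordinates based on homogeneity. By \eqref{eq:K-index}, each component $K=K_{ijm}$ has the form $K(z)=|z|^{-3}\Theta(\widehat z)$. The angular factor is
\[
\Theta(\widehat z)=\frac{3}{8\pi}\bigl(\widehat z_j\varepsilon_{ikm}\widehat z_k+\widehat z_i\varepsilon_{jkm}\widehat z_k\bigr).
\]
This $\Theta$ is a smooth, bounded function on the unit sphere. Since the entries of $\widehat z$ and of $\varepsilon$ are bounded by one, and the sum over $k$ has three terms, a crude bound gives $|\Theta|\le \frac{3}{8\pi}\cdot 6$. The precise value of the bound is irrelevant, because we only need a universal constant.

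Next I would write $z=s\omega$ with $s>\rho r$ and $\omega\in S^2$. Then
\[
\|K\mathbf 1_{|z|>\rho r}\|_{L^2}^2=\int_{S^2}|\Theta(\omega)|^2\,d\omega\int_{\rho r}^\infty s^{-6}s^2\,ds
=\|\Theta\|_{L^2(S^2)}^2\,\frac{(\rho r)^{-3}}{3}.
\]
The radial integral converges at infinity because the exponent is $s^{-4}$. Taking square roots gives \eqref{eq:tail-kernel} with $C_K=\|\Theta\|_{L^2(S^2)}/\sqrt3$. This constant is universal: it depends only on the index triple $(i,j,m)$, and one can take the maximum over the finitely many triples.

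There is no real obstacle here. The only point to note is that the truncation $|z|>\rho r$ removes the non-integrable singularity at the origin, which is the reason $K$ itself is not in $L^2$. The scaling $\rho^{-3/2}r^{-3/2}$ then follows from the homogeneity of degree $-3$ in dimension three.
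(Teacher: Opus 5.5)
Your proposal is correct and follows essentially the same route as the paper: both use the degree $-3$ homogeneity and a polar-coordinate radial integral $\int_{\rho r}^\infty s^{-6}s^2\,ds=(\rho r)^{-3}/3$. You keep the exact angular factor $\|\Theta\|_{L^2(S^2)}$ where the paper uses the pointwise bound $|K(z)|\le C|z|^{-3}$, and this difference is only cosmetic.
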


\begin{proof}
The kernel satisfies $|K(z)|\le C|z|^{-3}$.  Therefore
\[
\int_{|z|>\rho r}|K(z)|^2\,dz
\le
C\int_{\rho r}^{\infty}s^{-6}s^2\,ds
=
C(\rho r)^{-3}.
\]
Taking the square root proves the claim.
\end{proof}

\begin{theorem}[Far-field weighted packing]\label{thm:farfield-packing}
Assume $u$ is a Leray--Hopf solution on $\R^3$, normalize $r_0=1$, set $r_k=2^{-k}$ and $\ell_k=\sigma r_k$, and let $V_k^{+,\mathrm{far}}$ be the positive far-field stretching contribution.  Then
\begin{equation}\label{eq:Ffar-single}
V_k^{+,\mathrm{far}}
\le
C_{\rho,\sigma,\chi,\varphi}M_E^{3/2}2^{3k/2}
\end{equation}
for all $k\ge0$.  Consequently, for every $w\in\mathcal W_{3/2}$,
\begin{equation}\label{eq:Ffar-pack}
\sum_{k=0}^{N}w_kV_k^{+,\mathrm{far}}
\le
C_{\rho,\sigma,\chi,\varphi}M_E^{3/2}
\|w\|_{\mathcal W_{3/2}}.
\end{equation}
\end{theorem}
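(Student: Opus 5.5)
Pointwise in $(x,t)$ on $Q_k$, I bound the far-field strain $\mathbb S^{\mathrm{far}}_{\ell_k}=\mathbb S_{\ell_k}-\mathbb S^{\mathrm{near}}_{\ell_k}$ by the energy, bound $|\Omega_k|^2$ by the filter smoothing estimate, and then integrate over the cylinder. All constants are tracked as powers of $r_k=2^{-k}$.

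\emph{Step 1: the far-field strain in $L^\infty$.} The far part of the strain is the convolution of $\Omega_k$ with the kernel $K(1-\vartheta(\cdot/(\rho r_k)))$. This kernel is supported in $|z|>\rho r_k/2$ and is no longer singular. Write $\Omega_k=\curl(\varphi_{\ell_k}*u)$ and move the curl onto the smooth truncated kernel: $\nabla[K(1-\vartheta)]$ is $O(|z|^{-4})$ on $|z|\gtrsim\rho r_k$. Cauchy--Schwarz then gives
\[
|\mathbb S^{\mathrm{far}}_{\ell_k}(x,t)|\lesssim \|\nabla[K(1-\vartheta(\cdot/\rho r_k))]\|_{L^2}\,\|u(t)\|_{L^2}\lesssim (\rho r_k)^{-5/2}M_E^{1/2}.
\]
Alternatively I could keep the derivative on the filter and use \cref{lem:tail-kernel} together with $\|\Omega_k\|_{L^2}\lesssim\ell_k^{-1}\|u\|_2$. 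That gives $(\rho r_k)^{-3/2}(\sigma r_k)^{-1}M_E^{1/2}$, which has the same total scaling $r_k^{-5/2}$, and either route suffices. Two details need care. The near cutoff is the smooth profile $\vartheta$, so the tail kernel is $K(1-\vartheta)$ rather than $K\mathbf 1_{|z|>\rho r}$; the same $L^2$ computation still applies. The principal value also has to be justified: the near part absorbs the singularity, and the far part is an absolutely convergent integral.

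\emph{Step 2: the vorticity factor.} By \eqref{eq:Cphi-def} applied to the whole-space field, $\|\Omega_k(t)\|_{L^\infty}\le C_\varphi\ell_k^{-5/2}M_E^{1/2}$. I will not use $L^\infty$ on both factors of $|\Omega_k|^2$, since that costs too much in $r_k$. Instead I use $\int_{B_{r_k}}|\Omega_k|^2\le\|\Omega_k\|_{L^2}^2\lesssim\ell_k^{-2}M_E$, which follows because $\|\nabla\varphi_\ell\|_{L^1}\lesssim\ell^{-1}$.

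\emph{Step 3: assemble and count powers.} Combining the two steps,
\[
V_k^{+,\mathrm{far}}\le r_k\int_{I_{r_k}}\|\mathbb S^{\mathrm{far}}\|_\infty\int\chi_k|\Omega_k|^2\lesssim r_k\cdot r_k^2\cdot(\rho r_k)^{-5/2}M_E^{1/2}\cdot(\sigma r_k)^{-2}M_E .
\]
This equals $C_{\rho,\sigma}M_E^{3/2}r_k^{-3/2}=C M_E^{3/2}2^{3k/2}$, which is \eqref{eq:Ffar-single}. The dependence on $\chi$ enters only through $0\le\chi_k\le1$. The packing bound \eqref{eq:Ffar-pack} then follows by multiplying by $w_k$, summing over $k\le N$, and using the definition \eqref{eq:Walpha-def} of $\|w\|_{\mathcal W_{3/2}}$.

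The main obstacle is Step 1, specifically getting the sharp power $r_k^{-5/2}$ for the far strain uniformly in $t$ while handling the smooth cutoff profile. If I used the cruder bound $\|\mathbb S_{\ell_k}\|_\infty\lesssim\ell_k^{-5/2}$ for the whole strain, the powers of $r_k$ would match, but the estimate would no longer isolate a genuinely far-field quantity. I therefore expect to check carefully that the kernel-derivative route yields $\rho^{-5/2}$ and not a worse power.
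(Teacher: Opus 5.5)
Your proof is correct. It has the same skeleton as the paper's: an energy-level $L^\infty$ bound on the far strain over the core, times the local filtered enstrophy, times the cylinder factor $r_k\cdot r_k^2$, giving $r_k^{-3/2}$. The difference lies in how the two factors are controlled, and here your version is the more careful one.

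\textbf{The paper's route.} The paper applies \cref{lem:tail-kernel} against $\|\Omega_k(t)\|_{L^2(B_{2r_k})}$. It then bounds this local norm, and the enstrophy factor, by volume times $\|\Omega_k\|_\infty\lesssim\ell_k^{-5/2}M_E^{1/2}$. This produces a constant of order $\rho^{-3/2}\sigma^{-15/2}$.

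\textbf{The gap in that route.} For $x\in B_{r_k}$ the tail kernel samples $\Omega_k$ on all of $\R^3$ outside a ball of radius comparable to $\rho r_k$ around $x$. A norm over $B_{2r_k}$ therefore does not control $\mathbb S^{\mathrm{far}}_{\ell_k}$. That norm must be global, and then the volume-times-supremum device is unavailable for that factor.

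\textbf{Why your bounds fix it.} Your global bounds are exactly what this step needs. Either $\|\nabla[K(1-\vartheta(\cdot/\rho r_k))]\|_{L^2}\|u\|_{L^2}\lesssim(\rho r_k)^{-5/2}M_E^{1/2}$, after moving the curl onto the kernel (boundary terms vanish since $U_k\in L^\infty$ and $K=O(|z|^{-3})$). Or \cref{lem:tail-kernel} with $\|\Omega_k\|_{L^2(\R^3)}\lesssim\ell_k^{-1}M_E^{1/2}$. They also improve the constant to $\rho^{-5/2}\sigma^{-2}$ (respectively $\rho^{-3/2}\sigma^{-3}$). In the first route the far-strain bound is independent of $\sigma$. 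You also correctly replace the sharp indicator in the lemma by the smooth cutoff $1-\vartheta$ actually used in the near/far split.

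\textbf{One correction to your Step 2 commentary.} Putting $L^\infty$ on both factors of $|\Omega_k|^2$ over $B_{r_k}$ does not cost powers of $r_k$. Indeed $r_k^3\ell_k^{-5}M_E=\sigma^{-5}r_k^{-2}M_E$ scales exactly like $\ell_k^{-2}M_E=\sigma^{-2}r_k^{-2}M_E$. It only worsens the $\sigma$-dependence, and that is precisely what the paper does for the enstrophy factor.
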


\begin{proof}
By \cref{lem:tail-kernel} and Young's inequality for convolution,
\[
\|\mathbb S_{\ell_k}^{\mathrm{far}}(t)\|_{L^\infty(B_{r_k})}
\le
C\rho^{-3/2}r_k^{-3/2}
\|\Omega_k(t)\|_{L^2(B_{2r_k})}.
\]
The filter smoothing estimate gives
\[
\|\Omega_k(t)\|_{L^\infty}
\le C_\varphi \ell_k^{-5/2}\|u(t)\|_{L^2}
\le C_\varphi \sigma^{-5/2}r_k^{-5/2}M_E^{1/2}.
\]
Estimating the local $L^2$ mass of $\Omega_k$ by its $L^\infty$ norm over a ball of volume $O(r_k^3)$ and integrating over a time interval of length $O(r_k^2)$ gives
\[
V_k^{+,\mathrm{far}}
\le
C_{\rho,\sigma,\chi,\varphi}M_E^{3/2}r_k^{-3/2}.
\]
Since $r_k=2^{-k}$, this is \eqref{eq:Ffar-single}.  Summing against $w_k$ gives \eqref{eq:Ffar-pack}.
\end{proof}

\begin{remark}[Energy-level baseline and geometric refinement]\label{rem:farfield-weight-loss}
The factor $2^{3k/2}$ is the energy-level baseline for the nonlocal tail.  The annular and harmonic formulations below locate the source of this loss more precisely: high-order external modes gain scale separation, while low-order strain jets can recur across nested cores.  This exposes two concrete routes to unweighted closure---Carleson summability of reassigned reservoirs or cancellation of the affine jet.
\end{remark}

\subsection{Annular shell formula and reassignment}

Let
\[
I_k=(t_0-r_k^2,t_0).
\]
Let $\mu_k^{\mathrm{far,ann}}$ denote the contribution of the shells $0\le m\le k$ to $V_k^{+,\mathrm{far}}$.  The remaining exterior shells $m>k$ are not included in the reassigned annular quantity; they are kept in the energy-level far-field budget unless an additional exterior-tail estimate is imposed.
For $x\in B_{r_k}(x_0)$ split the far-field strain into shells
\[
\mathbb S_k^{\mathrm{far}}(x,t)=\sum_{m\ge0}\mathbb S_{k,m}(x,t),
\qquad
\mathbb S_{k,m}(x,t)
:=
\int_{A_{k,m}(x)}K(x-y)\Omega_k(y,t)\,dy,
\]
where
\[
A_{k,m}(x):=\{y:\Gamma 2^m r_k<|x-y|\le 2\Gamma 2^m r_k\},
\qquad \Gamma>1,
\]
and let $A_{k,m}^*$ denote the union of $A_{k,m}(x)$ over $x\in B_{r_k}(x_0)$.  The off-diagonal bound $|K(z)|\lesssim |z|^{-3}$ and Cauchy--Schwarz on the shell give
\begin{equation}\label{eq:far-shell-estimate}
\mu_k^{\mathrm{far,ann}}
\le
C_0(\Gamma,\sigma,\chi,\varphi)
\sum_{m=0}^{k}2^{-3m/2}
\mathcal E_{k,m}^{1/2}\,\mathcal Q_k,
\end{equation}
where
\begin{equation}\label{eq:Ekm-Qk-def}
\mathcal E_{k,m}:=
 r_k^{-1}\iint_{I_k\times A_{k,m}^*}|\Omega_k|^2,
\qquad
\mathcal Q_k:=
\left(
 \int_{I_k}
 \left(\int_{B_{2r_k}}\chi_k|\Omega_k|^2\,dx\right)^2dt
\right)^{1/2}.
\end{equation}
The quantity $\mathcal Q_k$ is a time-profile version of the local filtered enstrophy reservoir.  Its normalization is chosen so that the shell formula has the same scale as $\mu_k^{\mathrm{far,ann}}$.

\begin{lemma}[Annular reassignment with bounded overlap]\label{lem:annular-reassignment}
Let $j=k-m$ with $0\le m\le k$, and define the stationary annulus
\begin{equation}\label{eq:Ajtilde-def}
\widetilde A_j
:=
\{y:(\Gamma-1)r_j<|y-x_0|\le (2\Gamma+1)r_j\}.
\end{equation}
Then
\[
A_{k,m}(x)\subset \widetilde A_{k-m}
\qquad\text{for every }x\in B_{r_k}(x_0),
\]
and hence $A_{k,m}^*\subset \widetilde A_{k-m}$.  Moreover
\begin{equation}\label{eq:bounded-overlap-Aj}
\sum_{j\ge0}\mathbf 1_{\widetilde A_j}(y)
\le
N_{\mathrm{ov}}(\Gamma)
:=1+\left\lceil \log_2\frac{2\Gamma+1}{\Gamma-1}\right\rceil.
\end{equation}
In particular, if $\Gamma\ge2$, then $N_{\mathrm{ov}}(\Gamma)\le4$.
\end{lemma}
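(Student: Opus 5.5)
The plan is to verify the inclusion directly from the triangle inequality, using the dyadic identity $2^m r_k=r_{k-m}$. The overlap bound then comes from counting the integers $j$ for which a fixed point lies in $\widetilde A_j$.

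\emph{Inclusion.} Fix $0\le m\le k$, set $j=k-m$, and note $2^m r_k=2^{m-k}=r_j$. Hence
\[
A_{k,m}(x)=\{y:\Gamma r_j<|x-y|\le 2\Gamma r_j\}.
\]
Let $x\in B_{r_k}(x_0)$ and $y\in A_{k,m}(x)$. Since $m\ge0$, we have $r_k\le r_j$.
\begin{itemize}
\item For the lower bound, $|y-x_0|\ge|x-y|-|x-x_0|>\Gamma r_j-r_k\ge(\Gamma-1)r_j$.
\item For the upper bound, $|y-x_0|\le|x-y|+|x-x_0|\le 2\Gamma r_j+r_k\le(2\Gamma+1)r_j$.
\end{itemize}
Thus $y\in\widetilde A_j$, so $A_{k,m}(x)\subset\widetilde A_{k-m}$ for every such $x$. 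Taking the union over $x\in B_{r_k}(x_0)$ gives $A_{k,m}^*\subset\widetilde A_{k-m}$.

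\emph{Bounded overlap.} Fix $y$ and put $s=|y-x_0|$. If $s=0$, then $y$ lies in no $\widetilde A_j$, because the lower radius $(\Gamma-1)r_j$ is positive; the sum is zero. For $s>0$, the condition $y\in\widetilde A_j$ reads $(\Gamma-1)2^{-j}<s\le(2\Gamma+1)2^{-j}$. Taking $\log_2$, this is equivalent to
\[
a<j\le a+L,\qquad a=\log_2\frac{\Gamma-1}{s},\qquad L=\log_2\frac{2\Gamma+1}{\Gamma-1}.
\]
A half-open interval $(a,a+L]$ of length $L$ contains at most $\lfloor L\rfloor+1\le 1+\lceil L\rceil$ integers. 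Therefore
\[
\sum_{j\ge0}\mathbf 1_{\widetilde A_j}(y)\le N_{\mathrm{ov}}(\Gamma).
\]

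\emph{The case $\Gamma\ge2$.} Write
\[
\frac{2\Gamma+1}{\Gamma-1}=2+\frac{3}{\Gamma-1},
\]
which is decreasing in $\Gamma$ and so is at most $5$ for $\Gamma\ge2$. Then $L\le\log_2 5<3$, so $\lceil L\rceil\le3$ and $N_{\mathrm{ov}}(\Gamma)\le4$.

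No step is a genuine obstacle. The only points needing care are the use of $m\ge0$ to absorb $r_k$ into $r_j$, and the integer count on a half-open interval.
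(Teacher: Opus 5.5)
Your proof is correct and follows the paper's argument. For the inclusion, both use the triangle inequality with $r_{k-m}=2^m r_k$ and $r_k\le r_{k-m}$; for the overlap, both count the dyadic radii $r_j$ in an interval of logarithmic length $\log_2\frac{2\Gamma+1}{\Gamma-1}$. Your version is slightly more careful than the paper's: the paper's display $\frac{|y-x_0|}{2\Gamma+1}<r_j\le\frac{|y-x_0|}{\Gamma-1}$ has the strict and non-strict inequalities swapped, and it leaves the $\Gamma\ge2$ claim unchecked, whereas you handle both correctly.
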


\begin{proof}
Let $y\in A_{k,m}(x)$ and $x\in B_{r_k}(x_0)$.  Since $r_{k-m}=2^m r_k$, the triangle inequality gives
\[
|y-x_0|\ge |y-x|-|x-x_0|>\Gamma r_{k-m}-r_k
=(\Gamma-2^{-m})r_{k-m}\ge (\Gamma-1)r_{k-m},
\]
and
\[
|y-x_0|\le |y-x|+|x-x_0|\le 2\Gamma r_{k-m}+r_k
=(2\Gamma+2^{-m})r_{k-m}\le (2\Gamma+1)r_{k-m}.
\]
Thus $y\in \widetilde A_{k-m}$.  For the overlap bound, if $y\in\widetilde A_j$, then
\[
\frac{|y-x_0|}{2\Gamma+1}<r_j\le \frac{|y-x_0|}{\Gamma-1}.
\]
A dyadic sequence can meet this interval at most $1+\lceil\log_2((2\Gamma+1)/(\Gamma-1))\rceil$ times.
\end{proof}

\begin{definition}[Reassigned annular reservoirs]\label{def:reassigned-reservoirs}
For $j\le k$ set
\begin{equation}\label{eq:Ajk-def}
\mathfrak A_{j,k}
:=
\left(
 r_j^{-1}\iint_{I_k\times\widetilde A_j}|\Omega_k|^2
\right)^{1/2},
\qquad
\mathfrak A_j:=\sup_{k\ge j}\mathfrak A_{j,k}.
\end{equation}
\end{definition}

\begin{proposition}[Exact reassigned far-field bound]\label{prop:reassigned-farfield}
For every $k$,
\begin{equation}\label{eq:reassigned-farfield}
\mu_k^{\mathrm{far,ann}}
\le
C_0(\Gamma,\sigma,\chi,\varphi)
\sum_{j=0}^{k}2^{-(k-j)}\mathfrak A_{j,k}\mathcal Q_k
\le
C_0(\Gamma,\sigma,\chi,\varphi)
\sum_{j=0}^{k}2^{-(k-j)}\mathfrak A_j\mathcal Q_k.
\end{equation}
Consequently,
\begin{equation}\label{eq:shell-to-core-convolution}
\sum_{k=0}^{N}\mu_k^{\mathrm{far,ann}}
\le
C_0(\Gamma,\sigma,\chi,\varphi)
\sum_{j=0}^{N}\mathfrak A_j
\sum_{m=0}^{N-j}2^{-m}\mathcal Q_{j+m}.
\end{equation}
\end{proposition}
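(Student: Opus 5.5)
The plan is to start from the shell estimate \eqref{eq:far-shell-estimate} and pass from the moving shells $A_{k,m}^*$ to the stationary annuli of \cref{lem:annular-reassignment}.

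\textbf{Step 1: compare $\mathcal E_{k,m}$ with the reassigned reservoirs.} Fix $0\le m\le k$ and set $j=k-m$. By \cref{lem:annular-reassignment}, $A_{k,m}^*\subset\widetilde A_{j}$, so
\[
\mathcal E_{k,m}
= r_k^{-1}\iint_{I_k\times A_{k,m}^*}|\Omega_k|^2
\le r_k^{-1}\iint_{I_k\times\widetilde A_j}|\Omega_k|^2 .
\]
Since $r_k=2^{-m}r_j$, this gives $\mathcal E_{k,m}\le 2^{m}\mathfrak A_{j,k}^2$, hence
\[
\mathcal E_{k,m}^{1/2}\le 2^{m/2}\mathfrak A_{j,k}.
\]

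\textbf{Step 2: substitute into the shell estimate.} Inserting this bound into \eqref{eq:far-shell-estimate} yields the weight
\[
2^{-3m/2}\cdot 2^{m/2}=2^{-m}=2^{-(k-j)}.
\]
Reindexing the sum by $j=k-m\in\{0,\dots,k\}$ gives the first inequality in \eqref{eq:reassigned-farfield}. The second inequality is immediate from $\mathfrak A_{j,k}\le\mathfrak A_j$, which holds for $j\le k$ by \cref{def:reassigned-reservoirs}.

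\textbf{Step 3: sum over $k$.} Sum the second bound over $0\le k\le N$. All terms are nonnegative, so we may exchange the order of summation over the triangle $0\le j\le k\le N$. Writing $k=j+m$ with $0\le m\le N-j$ gives
\[
\sum_{k=0}^N\sum_{j=0}^k 2^{-(k-j)}\mathfrak A_j\mathcal Q_k
=\sum_{j=0}^N\mathfrak A_j\sum_{m=0}^{N-j}2^{-m}\mathcal Q_{j+m},
\]
which is \eqref{eq:shell-to-core-convolution}.

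\textbf{Main obstacle.} The only delicate point is Step 1, specifically the normalization bookkeeping. The reservoir $\mathcal E_{k,m}$ is normalized by $r_k^{-1}$, while $\mathfrak A_{j,k}$ uses $r_j^{-1}$. The ratio $r_j/r_k=2^m$ costs a factor $2^{m/2}$ after taking the square root. This factor must be set against the $2^{-3m/2}$ kernel decay, leaving exactly $2^{-(k-j)}$. The factor is not absorbed into $C_0$, so the constant stays the same as in \eqref{eq:far-shell-estimate}.

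The time interval $I_k$ is the same on both sides of the comparison, since $\mathfrak A_{j,k}$ is defined with $I_k$. The bounded overlap part of \cref{lem:annular-reassignment} is not needed here; only the inclusion $A_{k,m}^*\subset\widetilde A_{k-m}$ is used.
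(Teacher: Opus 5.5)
Your proposal is correct and follows essentially the same route as the paper. The paper likewise uses the inclusion $A_{k,m}^*\subset\widetilde A_{k-m}$ and converts $r_k^{-1}=2^{m}r_j^{-1}$ so that the kernel factor $2^{-3m/2}$ becomes $2^{-m}$; it then passes to $\mathfrak A_j$ via the supremum and reindexes the sum with $m=k-j$.
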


\begin{proof}
Start from \eqref{eq:far-shell-estimate}.  By \cref{lem:annular-reassignment}, with $j=k-m$,
\[
\mathcal E_{k,m}^{1/2}
\le
\left(r_k^{-1}\iint_{I_k\times\widetilde A_j}|\Omega_k|^2\right)^{1/2}.
\]
Since $r_k=2^{-m}r_j$, we have
\[
2^{-3m/2}
\left(r_k^{-1}\iint_{I_k\times\widetilde A_j}|\Omega_k|^2\right)^{1/2}
=
2^{-m}
\left(r_j^{-1}\iint_{I_k\times\widetilde A_j}|\Omega_k|^2\right)^{1/2}
=
2^{-m}\mathfrak A_{j,k}.
\]
This gives \eqref{eq:reassigned-farfield}.  Summing in $k$ and writing $m=k-j$ gives \eqref{eq:shell-to-core-convolution}.
\end{proof}

\subsection{Conditional unweighted Carleson closure}

\begin{theorem}[Discrete Carleson closure from reassigned annuli]\label{thm:conditional-carleson}
Assume that $\mathfrak A=(\mathfrak A_j)_{j\ge0}\in\ell^p$ and $\mathcal Q=(\mathcal Q_k)_{k\ge0}\in\ell^q$, where $1\le p,q\le\infty$ and $1/p+1/q=1$.  Then, for every $N$,
\begin{equation}\label{eq:conditional-carleson}
\sum_{k=0}^{N}\mu_k^{\mathrm{far,ann}}
\le
2C_0(\Gamma,\sigma,\chi,\varphi)
\|\mathfrak A\|_{\ell^p}\|\mathcal Q\|_{\ell^q}.
\end{equation}
In particular, the right-hand side is independent of $N$.
\end{theorem}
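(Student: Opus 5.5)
The plan is to start from the summed convolution bound \eqref{eq:shell-to-core-convolution} of Proposition~\ref{prop:reassigned-farfield}, which already reduces everything to a discrete bilinear form. Writing
\[
T_N(\mathfrak A,\mathcal Q):=\sum_{j=0}^{N}\mathfrak A_j\sum_{m=0}^{N-j}2^{-m}\mathcal Q_{j+m},
\]
it suffices to show $T_N(\mathfrak A,\mathcal Q)\le 2\|\mathfrak A\|_{\ell^p}\|\mathcal Q\|_{\ell^q}$ uniformly in $N$. The factor $2=\sum_{m\ge0}2^{-m}$ is the only constant that should appear.

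I would exchange the order of summation so that the geometric weight sits outside:
\[
T_N=\sum_{m=0}^{N}2^{-m}\sum_{j=0}^{N-m}\mathfrak A_j\mathcal Q_{j+m}.
\]
All terms are nonnegative, since $\mathfrak A_j$ and $\mathcal Q_k$ are square roots of nonnegative integrals, so the rearrangement is harmless. For each fixed $m$, H\"older's inequality with conjugate exponents $p,q$ gives
\[
\sum_{j=0}^{N-m}\mathfrak A_j\mathcal Q_{j+m}
\le\Bigl(\sum_{j}\mathfrak A_j^p\Bigr)^{1/p}\Bigl(\sum_j\mathcal Q_{j+m}^q\Bigr)^{1/q}
\le\|\mathfrak A\|_{\ell^p}\|\mathcal Q\|_{\ell^q}.
\]
The last step uses that the shift $k=j+m$ only restricts the index range. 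This works as stated for $1<p,q<\infty$. In the endpoint cases $p=1,q=\infty$ or $p=\infty,q=1$ one simply bounds one factor by its supremum and sums the other. Summing over $m$ with $\sum_{m\ge0}2^{-m}=2$ then yields \eqref{eq:conditional-carleson}. Equivalently, this is Young's inequality $\ell^1*\ell^q\subset\ell^q$ for the kernel $2^{-m}$ paired with $\ell^p$.

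There is no real obstacle here, since the analytic content was already placed in Proposition~\ref{prop:reassigned-farfield}. The one point to check carefully is that the constant $C_0(\Gamma,\sigma,\chi,\varphi)$ in \eqref{eq:shell-to-core-convolution} is independent of $k$ and $N$, so that it factors out of the double sum. This holds because the cutoffs $\chi_k$ are parabolic rescalings of a single profile and $\ell_k=\sigma r_k$. Given that, the right-hand side is manifestly independent of $N$, which is the final assertion of the theorem.
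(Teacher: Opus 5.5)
Your proposal is correct and takes essentially the paper's route. The paper writes the double sum in \eqref{eq:shell-to-core-convolution} as the pairing of $\mathfrak A$ with the sequence $j\mapsto\sum_{m\ge0}K_m\mathcal Q_{j+m}$, where $K_m=2^{-m}\mathbf 1_{m\ge0}$, which it denotes $(K*\mathcal Q)_j$. It then applies Young's inequality, $\|K*\mathcal Q\|_{\ell^q}\le 2\|\mathcal Q\|_{\ell^q}$, followed by H\"older. Your shift-by-shift H\"older bound, summed against $\sum_{m}2^{-m}=2$, is exactly that argument written out by hand.
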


\begin{proof}
Let $K_m=2^{-m}\mathbf 1_{m\ge0}$.  Then $\|K\|_{\ell^1}=2$ and \eqref{eq:shell-to-core-convolution} can be written as
\[
\sum_{k=0}^{N}\mu_k^{\mathrm{far,ann}}
\le
C_0\sum_{j=0}^{N}\mathfrak A_j(K*\mathcal Q)_j.
\]
Young's inequality on sequences gives $\|K*\mathcal Q\|_{\ell^q}\le2\|\mathcal Q\|_{\ell^q}$.  H\"older's inequality then proves \eqref{eq:conditional-carleson}.
\end{proof}

\begin{corollary}[Named sufficient conditions]\label{cor:named-farfield-closures}
Each of the following assumptions implies an unweighted bound for $\sum_{k=0}^{N}\mu_k^{\mathrm{far,ann}}$ with a constant independent of $N$:
\begin{enumerate}[label=(\roman*)]
\item $\sum_j\mathfrak A_j\le A_*$ and $\sup_k\mathcal Q_k\le Q_*$;
\item $\sup_j\mathfrak A_j\le A_*$ and $\sum_k\mathcal Q_k\le Q_*$;
\item $\mathfrak A\in\ell^p$ and $\mathcal Q\in\ell^q$ with $1/p+1/q=1$;
\item $\mathfrak A_{j,k}\le \alpha_j2^{-\beta(k-j)}$ for some $\beta>0$, with $\alpha\in\ell^p$ and $\mathcal Q\in\ell^q$.
\end{enumerate}
\end{corollary}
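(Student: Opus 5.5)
The plan is to reduce each of the four hypotheses to \cref{thm:conditional-carleson}, or directly to the convolution bound \eqref{eq:shell-to-core-convolution} of Proposition~\ref{prop:reassigned-farfield}. Cases (i)--(iii) are Hölder pairings in that theorem. Case (i) takes $(p,q)=(1,\infty)$ and gives the bound $2C_0A_*Q_*$. Case (ii) takes $(p,q)=(\infty,1)$ and gives the same constant $2C_0A_*Q_*$. Case (iii) is the theorem verbatim. In each case \eqref{eq:conditional-carleson} already has a right-hand side independent of $N$, so nothing further is needed.

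Case (iv) carries extra decay in $k-j$, and I would go back to the first inequality in \eqref{eq:reassigned-farfield}, which still contains $\mathfrak A_{j,k}$. Substituting $\mathfrak A_{j,k}\le\alpha_j2^{-\beta(k-j)}$ gives
\[
\mu_k^{\mathrm{far,ann}}\le C_0\sum_{j=0}^{k}2^{-(1+\beta)(k-j)}\alpha_j\mathcal Q_k .
\]
Summing over $k\le N$ and writing $m=k-j$ produces
\[
C_0\sum_{j}\alpha_j(\widetilde K*\mathcal Q)_j,\qquad \widetilde K_m=2^{-(1+\beta)m}\mathbf 1_{m\ge0},
\]
where $(\widetilde K*\mathcal Q)_j=\sum_{m\ge0}\widetilde K_m\mathcal Q_{j+m}$ is the same one-sided correlation as in the proof of \cref{thm:conditional-carleson}. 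Since $\|\widetilde K\|_{\ell^1}=(1-2^{-(1+\beta)})^{-1}\le2$, Young's inequality on sequences and Hölder give
\[
\sum_{k\le N}\mu_k^{\mathrm{far,ann}}\le 2C_0\|\alpha\|_{\ell^p}\|\mathcal Q\|_{\ell^q}.
\]
In fact the bound $\mathfrak A_{j,k}\le\alpha_j$ alone would already suffice through $\mathfrak A_j\le\alpha_j$ and case (iii), so $\beta>0$ only improves the constant. I would remark on this rather than rely on it.

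None of these steps is a real obstacle. The one point to check is that the one-sided correlation satisfies the Young bound $\|K*\mathcal Q\|_{\ell^q}\le\|K\|_{\ell^1}\|\mathcal Q\|_{\ell^q}$ even at the endpoints $q=1,\infty$. It does, by Minkowski's inequality applied to the shifts $\mathcal Q_{\cdot+m}$, each of which has $\ell^q$ norm at most $\|\mathcal Q\|_{\ell^q}$.
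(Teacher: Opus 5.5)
Your proposal is correct and takes essentially the paper's route: the paper states this corollary without a separate proof, as an immediate consequence of \cref{thm:conditional-carleson}. Cases (i)--(iii) are the H\"older pairings $(1,\infty)$, $(\infty,1)$ and general conjugate $(p,q)$ in \eqref{eq:conditional-carleson}, and case (iv) follows either from the first inequality in \eqref{eq:reassigned-farfield} with the kernel $2^{-(1+\beta)m}$ or, as you note, directly from $\mathfrak A_j\le\alpha_j$ and case (iii); your Minkowski check of the one-sided Young bound at the endpoints $q=1,\infty$ is the one point the paper's proof leaves implicit.
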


\begin{remark}[Why reassignment alone is not enough]\label{rem:unweighted-not-from-energy}
The reassignment formula controls multiplicity in the shell variable but not summability of the coarse-shell sequence.  If the scale-invariant quantities $\mathfrak A_j$ and $\mathcal Q_k$ are merely bounded, the discrete model
\[
\mu_k\sim \sum_{j=0}^{k}2^{-(k-j)}\cdot 1\cdot 1\sim1
\]
is consistent with \eqref{eq:reassigned-farfield}, and therefore $\sum_{k=0}^N\mu_k\sim N$.  This does not assert that Navier--Stokes realizes such a model; it shows that full unweighted packing cannot be deduced from the current shell inequality alone.
\end{remark}

\subsection{Fixed-annulus harmonic route}\label{rem:fixed-annulus-harmonic-route}

The moving-shell decomposition used in the reassignment estimate is an absolute-value device.  Its shell domains depend on the core point $x$, and therefore the associated moving-shell field is not, by itself, a fixed-source harmonic field.  Harmonic Taylor expansion is legitimate only after one replaces the moving shells by a fixed smooth annular partition centered at $x_0$.

More precisely, one may choose a smooth partition of the exterior region by functions $\psi_j(y)$ supported where $|y-x_0|\simeq r_j$ and define
\[
H_{j,k}(x,t):=\int K(x-y)\psi_j(y)\Omega_k(y,t)\,dy.
\]
For $j<k$ this is a smooth exterior-source strain field on the core $B_{r_k}(x_0)$.  In an exterior-source formulation it is harmonic in the core, and after subtraction of its affine Taylor jet the remainder gains powers of $r_k/r_j$.  This gives a plausible route to affine-jet cancellation and unweighted far-field packing, but it is a separate conditional module.  No unconditional theorem in the present paper identifies the moving-shell bound of Proposition~\ref{prop:reassigned-farfield} with this fixed-source harmonic expansion.

\section{Derivative-compatible commutator forcing}\label{sec:comm}

The filtered vorticity equation contains the commutator forcing
\[
\nabla\times\nabla\!\cdot R_{\ell},
\qquad
R_\ell=S_\ell(u\otimes u)-U_\ell\otimes U_\ell.
\]
The energy-level estimate for this term loses a factor $2^{2k}$ on dyadic chains.  We replace that scale loss by a scale-invariant increment defect adapted to the differentiated filter kernel.  This is the filtered-vorticity counterpart of the increment and local-defect viewpoint in Onsager-type coarse-graining and Duchon--Robert balances \cite{ConstantinETiti1994,DuchonRobert2000,EyinkAluie2009}.  The resulting theorem identifies the genuine scale-invariant increment defect that carries any commutator-supported positive filtered-enstrophy surplus.

\subsection{Cumulants and derivative-compatible increment defects}

For $z\in\R^3$ define the velocity increment
\[
\delta_z u(x,t):=u(x-z,t)-u(x,t)
\]
and write
\[
\langle F(\cdot;x,t)\rangle_\ell
:=\int_{\R^3}\varphi_\ell(z)F(z;x,t)\,dz.
\]
Then
\[
U_\ell=u+\langle \delta u\rangle_\ell,
\]
and the exact cumulant identity is
\begin{equation}\label{eq:cumulant-R}
R_\ell
=
\langle \delta u\otimes\delta u\rangle_\ell
-
\langle\delta u\rangle_\ell\otimes\langle\delta u\rangle_\ell .
\end{equation}
This is the standard coarse-graining central moment identity in the filtering framework \cite{Germano1992,EyinkAluie2009}; it is the natural replacement for a crude estimate of $R_\ell$ by the global energy alone.

The differentiated commutator stress $\nabla\!\cdot R_\ell$ naturally carries the differentiated kernel.  Accordingly, define the probability measures
\begin{equation}\label{eq:nu-mu-def}
d\nu_\ell(z):=\varphi_\ell(z)\,dz,
\qquad
 d\mu_\ell(z):=
\frac{\ell|\nabla\varphi_\ell(z)|}{\|\nabla\varphi\|_{L^1}}\,dz.
\end{equation}
For $q\ge1$ set
\begin{equation}\label{eq:Mphi-Mgrad-def}
M_{\varphi,q}(x,t):=
\left(\int |\delta_z u(x,t)|^q\,d\nu_\ell(z)\right)^{1/q},
\qquad
M_{\nabla,q}(x,t):=
\left(\int |\delta_z u(x,t)|^q\,d\mu_\ell(z)\right)^{1/q},
\end{equation}
and define the derivative-compatible increment envelope
\begin{equation}\label{eq:M-envelope-def}
\mathfrak M_{\ell,q}(x,t):=M_{\varphi,q}(x,t)+M_{\nabla,q}(x,t).
\end{equation}
For $p\in[2,4]$ define
\begin{equation}\label{eq:S-p-def}
\widetilde{\mathcal S}_{r,\ell}^{(p)}[\chi_r]
:=
\frac r{\ell^2}
\iint \chi_r(x,t)\,\mathfrak M_{\ell,p}(x,t)^4\,dx\,dt.
\end{equation}
At fixed relative filter length $\ell=\sigma r$, this quantity is invariant under the Navier--Stokes scaling.  It is the scale-invariant commutator defect used below.  For the dyadic chain we write
\begin{equation}\label{eq:Skp-def}
\widetilde{\mathcal S}_k^{(p)}
:=\widetilde{\mathcal S}_{r_k,\ell_k}^{(p)}[\chi_k],
\qquad
\ell_k=\sigma r_k.
\end{equation}
We also use the shell version $\widetilde{\mathcal S}_{k,\mathrm{sh}}^{(p)}$, defined by replacing $\chi_k$ in \eqref{eq:S-p-def} by a cutoff supported on $\supp\nabla\chi_k$ and equal to one there.

\begin{remark}[Relation with the simpler structure-function observable]\label{rem:simple-structure-function}
The simpler quantity
\[
\mathcal S_{r,\ell}^{(p)}[\chi_r]
=
\frac r{\ell^2}\iint\chi_r
\left(\int\varphi_\ell(z)|\delta_z u|^p\,dz\right)^{4/p}dx\,dt
\]
is recovered from \eqref{eq:S-p-def} under the kernel-dominance hypothesis
\[
 d\mu_\ell\le C_{\nabla/\varphi}\,d\nu_\ell
 \qquad\hbox{for all }\ell>0.
\]
Without such a comparability assumption, $\nabla\!\cdot R_\ell$ should be estimated by the derivative-compatible envelope \eqref{eq:M-envelope-def}, because differentiating the commutator stress differentiates the filter.
\end{remark}

\subsection{An exact increment estimate for \texorpdfstring{$\nabla\!\cdot R_\ell$}{div Rl}}

\begin{lemma}[Exact increment formula and derivative-compatible control]\label{lem:increment-divR}
Let $\varphi\in C_c^1(\R^3)$ be nonnegative with unit mass, and use the convention
\[
U_\ell(x,t)=\int_{\R^3}\varphi_\ell(z)u(x-z,t)\,dz,
\qquad
\delta_z u(x,t)=u(x-z,t)-u(x,t).
\]
Set
\[
m_\ell(x,t):=\int_{\R^3}\varphi_\ell(z)\delta_z u(x,t)\,dz=U_\ell(x,t)-u(x,t).
\]
Then, for each component $i=1,2,3$,
\begin{equation}\label{eq:divR-increment-identity}
(\nabla\!\cdot R_\ell)_i
=
\int_{\R^3}(\partial_j\varphi_\ell)(z)\,\delta_z u_i\,\delta_z u_j\,dz
-
 m_{\ell,j}\int_{\R^3}(\partial_j\varphi_\ell)(z)\,\delta_z u_i\,dz,
\end{equation}
with summation in $j$.  Consequently, for every $p\in[2,\infty)$,
\begin{equation}\label{eq:pointwise-divR-compatible}
|\nabla\!\cdot R_\ell|(x,t)
\le
\frac{C_{\mathrm{div}}\|\nabla\varphi\|_{L^1}}{\ell}
\bigl(M_{\nabla,2}^2+M_{\varphi,1}M_{\nabla,1}\bigr)
\le
\frac{C_{\mathrm{div}}\|\nabla\varphi\|_{L^1}}{\ell}\,\mathfrak M_{\ell,p}(x,t)^2,
\end{equation}
and hence
\begin{equation}\label{eq:pointwise-divR-p}
|\nabla\!\cdot R_\ell|^2(x,t)
\le
\frac{C_{\mathrm{div}}^2\|\nabla\varphi\|_{L^1}^2}{\ell^2}\,
\mathfrak M_{\ell,p}(x,t)^4.
\end{equation}
Here $C_{\mathrm{div}}$ is a universal dimensional constant.
\end{lemma}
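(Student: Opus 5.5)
We differentiate the cumulant identity \eqref{eq:cumulant-R} directly, keeping the derivative on the kernel; \eqref{eq:pointwise-divR-compatible} and \eqref{eq:pointwise-divR-p} then follow by pointwise Hölder bounds.

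\emph{Identity.} Write $R_\ell=S_\ell(u\otimes u)-U_\ell\otimes U_\ell$. We have
\[
\partial_j S_\ell(u_iu_j)(x)=\int(\partial_j\varphi_\ell)(z)\,u_i(x-z)u_j(x-z)\,dz
\]
and
\[
\partial_jU_{\ell,i}=\int(\partial_j\varphi_\ell)(z)\,u_i(x-z)\,dz,
\]
with summation in $j$. Two facts about the kernel are used: $\int\partial_j\varphi_\ell=0$, since $\varphi$ is compactly supported, and $\operatorname{div}u=0$, which gives $\int(\partial_j\varphi_\ell)(z)\,u_j(x-z)\,dz=\partial_jU_{\ell,j}=0$. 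Substitute $u(x-z)=u(x)+\delta_zu(x)$ into each term and expand.

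The constant-in-$z$ pieces $u_iu_j$ are killed by the zero mean of $\partial_j\varphi_\ell$. The cross terms $u_i(x)\int\partial_j\varphi_\ell\,\delta_zu_j$ vanish by the divergence-free fact, because $\int\partial_j\varphi_\ell\,\delta_zu_j=\partial_jU_{\ell,j}-u_j\int\partial_j\varphi_\ell=0$. The remaining cross term $u_j(x)\int\partial_j\varphi_\ell\,\delta_zu_i$ is cancelled by the corresponding piece of $U_{\ell,j}\,\partial_jU_{\ell,i}$.

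For the product term, use $\operatorname{div}U_\ell=0$ to write $\partial_j(U_{\ell,i}U_{\ell,j})=U_{\ell,j}\partial_jU_{\ell,i}$. Then insert $U_{\ell,j}=u_j+m_{\ell,j}$ and $\partial_jU_{\ell,i}=\int\partial_j\varphi_\ell\,\delta_zu_i$. The $u_j$ part is exactly the cancelling piece above, and what survives is $m_{\ell,j}\int\partial_j\varphi_\ell\,\delta_zu_i$. Collecting terms gives \eqref{eq:divR-increment-identity}.

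This bookkeeping is the only delicate step, and it is where the divergence-free hypothesis enters. Differentiability of $u$ is never needed, because all derivatives sit on $\varphi_\ell$; the energy-class case can be justified by density, or by reading each derivative distributionally.

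\emph{Pointwise bound.} Since $\ell|\nabla\varphi_\ell(z)|\,dz=\|\nabla\varphi\|_{L^1}\,d\mu_\ell$, we have $|(\partial_j\varphi_\ell)(z)|\le\ell^{-1}\|\nabla\varphi\|_{L^1}\,d\mu_\ell/dz$. Hence the first term of \eqref{eq:divR-increment-identity} is bounded by
\[
\ell^{-1}\|\nabla\varphi\|_{L^1}\int|\delta_zu|^2\,d\mu_\ell=\ell^{-1}\|\nabla\varphi\|_{L^1}M_{\nabla,2}^2.
\]
For the second term, $|m_\ell|\le M_{\varphi,1}$, and the integral is at most $\ell^{-1}\|\nabla\varphi\|_{L^1}M_{\nabla,1}$. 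Summing over the indices $i,j$ costs only a dimensional constant $C_{\mathrm{div}}$, which proves the first inequality of \eqref{eq:pointwise-divR-compatible}.

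Because $\nu_\ell$ and $\mu_\ell$ are probability measures, Jensen's inequality gives $M_{\nabla,2}\le M_{\nabla,p}$ for $p\ge2$, and $M_{\varphi,1}\le M_{\varphi,p}$, $M_{\nabla,1}\le M_{\nabla,p}$. Therefore
\[
M_{\nabla,2}^2+M_{\varphi,1}M_{\nabla,1}\le(M_{\varphi,p}+M_{\nabla,p})^2=\mathfrak M_{\ell,p}^2.
\]
Squaring gives \eqref{eq:pointwise-divR-p}.
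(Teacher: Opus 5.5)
Your proposal is correct and follows essentially the paper's route: the derivatives are placed on the kernel, $\operatorname{div}U_\ell=0$ together with the zero mean of $\partial_j\varphi_\ell$ kills the $u_i(x)$-term, you substitute $U_\ell=u+m_\ell$, and you finish with the $\mu_\ell$/$\nu_\ell$ bounds and monotonicity of $L^q$ norms under probability measures. The only difference is bookkeeping: the paper first writes $(\nabla\!\cdot R_\ell)_i=\int(\partial_j\varphi_\ell)(z)\,u_i(x-z)\bigl(u_j(x-z)-U_{\ell,j}(x)\bigr)\,dz$ and then substitutes increments, whereas you expand term by term and track the cancelling $u_j\int\partial_j\varphi_\ell\,\delta_z u_i$ pieces.
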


\begin{proof}
Since
\[
R_{\ell,ij}(x,t)
=
\int\varphi_\ell(z)u_i(x-z,t)u_j(x-z,t)\,dz
-U_{\ell,i}(x,t)U_{\ell,j}(x,t),
\]
we compute
\[
(\nabla\!\cdot R_\ell)_i
=\partial_j(\varphi_\ell*(u_i u_j))-
\partial_j(U_{\ell,i}U_{\ell,j}).
\]
Because $\nabla\!\cdot U_\ell=0$,
\[
\partial_j(U_{\ell,i}U_{\ell,j})=(\partial_jU_{\ell,i})U_{\ell,j}.
\]
Using the convention above and integrating by parts in $z$,
\[
\partial_j(\varphi_\ell*(u_i u_j))(x,t)
=
\int(\partial_j\varphi_\ell)(z)u_i(x-z,t)u_j(x-z,t)\,dz,
\]
and
\[
\partial_jU_{\ell,i}(x,t)
=
\int(\partial_j\varphi_\ell)(z)u_i(x-z,t)\,dz.
\]
Hence
\[
(\nabla\!\cdot R_\ell)_i
=
\int(\partial_j\varphi_\ell)(z)u_i(x-z)\bigl(u_j(x-z)-U_{\ell,j}(x)\bigr)\,dz.
\]
Write
\[
u_i(x-z)=u_i(x)+\delta_z u_i(x),
\qquad
u_j(x-z)-U_{\ell,j}(x)=\delta_z u_j(x)-m_{\ell,j}(x).
\]
The term proportional to $u_i(x)$ vanishes because
\[
\int(\partial_j\varphi_\ell)(z)\bigl(\delta_z u_j-m_{\ell,j}\bigr)\,dz
=
\partial_jU_{\ell,j}(x)=0.
\]
This proves \eqref{eq:divR-increment-identity}.

For the estimate, use \eqref{eq:nu-mu-def}.  The first term in \eqref{eq:divR-increment-identity} is bounded by
\[
\int |\nabla\varphi_\ell(z)|\,|\delta_z u|^2\,dz
=
\frac{\|\nabla\varphi\|_{L^1}}{\ell}M_{\nabla,2}^2,
\]
up to the universal dimensional summation constant.  The second term is bounded by
\[
|m_\ell|\int |\nabla\varphi_\ell(z)|\,|\delta_z u|\,dz
\le
M_{\varphi,1}\frac{\|\nabla\varphi\|_{L^1}}{\ell}M_{\nabla,1}.
\]
Because $d\nu_\ell$ and $d\mu_\ell$ are probability measures and $p\ge2$,
\[
M_{\varphi,1}\le M_{\varphi,p},
\qquad
M_{\nabla,1}\le M_{\nabla,p},
\qquad
M_{\nabla,2}\le M_{\nabla,p}.
\]
Thus the right-hand side is controlled by $\ell^{-1}\|\nabla\varphi\|_{L^1}\mathfrak M_{\ell,p}^2$, proving \eqref{eq:pointwise-divR-compatible}.  Squaring gives \eqref{eq:pointwise-divR-p}.
\end{proof}

\subsection{Derivative-compatible commutator insertion}

Recall the commutator forcing
\begin{equation}\label{eq:Fcomm-def-repeat}
F_k^{\mathrm{com}}
:=
r_k\left|
\iint_{Q_k}\chi_k\,
\Omega_k\cdot(\nabla\times\nabla\!\cdot R_k)
\right|.
\end{equation}

\begin{theorem}[Derivative-compatible commutator insertion]\label{thm:increment-comm}
For every $p\in[2,4]$ and every $\eta\in(0,1)$,
\begin{equation}\label{eq:increment-comm}
F_k^{\mathrm{com}}
\le
\eta P_k
+
\frac{C_{\mathrm{com}}^{\sharp}(\varphi)}{\eta}\,\widetilde{\mathcal S}_k^{(p)}
+
L_{k,\mathrm{inc}}^{\mathrm{com}},
\end{equation}
where one may take
\begin{equation}\label{eq:Ccomm-def}
C_{\mathrm{com}}^{\sharp}(\varphi)
=C_*\|\nabla\varphi\|_{L^1}^2
\end{equation}
with a universal numerical constant $C_*$.  The shell budget may be chosen as
\begin{equation}\label{eq:Lcomm-inc-def}
L_{k,\mathrm{inc}}^{\mathrm{com}}
:=
C_\chi r_k^{-1}\iint_{\supp\nabla\chi_k}|\Omega_k|^2
+
C_\chi r_k\iint_{\supp\nabla\chi_k}|\nabla\!\cdot R_k|^2.
\end{equation}
Moreover,
\begin{equation}\label{eq:shell-increment-control}
r_k\iint_{\supp\nabla\chi_k}|\nabla\!\cdot R_k|^2
\le
C_{\mathrm{com}}^{\sharp}(\varphi)\,\widetilde{\mathcal S}_{k,\mathrm{sh}}^{(p)}.
\end{equation}
Thus the commutator localization residual can be written as an annular filtered-enstrophy term plus an annular derivative-compatible increment defect.
\end{theorem}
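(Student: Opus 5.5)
The approach is to move the curl off the differentiated stress and onto the localized vorticity, apply Cauchy--Schwarz and Young's inequality, and then insert the pointwise bound \eqref{eq:pointwise-divR-p} of \cref{lem:increment-divR}. Write $G_k:=\nabla\!\cdot R_k$. Since $\chi_k$ is compactly supported and $\Omega_k$, $G_k$ are smooth in space (filtered fields), integration by parts in $x$ gives
\[
\iint\chi_k\,\Omega_k\cdot(\nabla\times G_k)
=\iint\chi_k\,(\nabla\times\Omega_k)\cdot G_k
+\iint(\nabla\chi_k\times\Omega_k)\cdot G_k ,
\]
with the sign of the second term fixed by the identity $\diver(a\times b)=b\cdot\curl a-a\cdot\curl b$. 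Because $|\nabla\times\Omega_k|\le\sqrt2\,|\nabla\Omega_k|$ pointwise, the first term is at most $\sqrt2\,(\iint\chi_k|\nabla\Omega_k|^2)^{1/2}(\iint\chi_k|G_k|^2)^{1/2}$, using $0\le\chi_k\le1$ so that $\chi_k=\chi_k^{1/2}\chi_k^{1/2}$. After multiplying by $r_k$ and applying Young's inequality,
\[
r_k\Bigl|\iint\chi_k(\nabla\times\Omega_k)\cdot G_k\Bigr|\le\eta P_k+\frac{1}{2\eta}\,r_k\iint\chi_k|G_k|^2 .
\]

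For the bulk term, \eqref{eq:pointwise-divR-p} gives
\[
r_k\iint\chi_k|G_k|^2\le C_{\mathrm{div}}^2\|\nabla\varphi\|_{L^1}^2\,\frac{r_k}{\ell_k^2}\iint\chi_k\,\mathfrak M_{\ell_k,p}^4
= C_{\mathrm{div}}^2\|\nabla\varphi\|_{L^1}^2\,\widetilde{\mathcal S}_k^{(p)},
\]
which is precisely the definition \eqref{eq:S-p-def}. This yields the middle term of \eqref{eq:increment-comm} with $C_*=C_{\mathrm{div}}^2/2$, or any larger universal constant. For the shell term, since $\chi_k$ is a parabolic rescaling of a fixed profile, $|\nabla\chi_k|\le C_\chi r_k^{-1}\mathbf 1_{\supp\nabla\chi_k}$. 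Young's inequality in the form $ab\le\frac12 a^2+\frac12 b^2$ then gives
\[
r_k\Bigl|\iint(\nabla\chi_k\times\Omega_k)\cdot G_k\Bigr|\le C_\chi\iint_{\supp\nabla\chi_k}|\Omega_k||G_k|
\le \tfrac{C_\chi}{2}\,r_k^{-1}\iint_{\supp\nabla\chi_k}|\Omega_k|^2+\tfrac{C_\chi}{2}\,r_k\iint_{\supp\nabla\chi_k}|G_k|^2 ,
\]
which is dominated by $L_{k,\mathrm{inc}}^{\mathrm{com}}$ as defined in \eqref{eq:Lcomm-inc-def}, after enlarging $C_\chi$ if needed. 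Finally, \eqref{eq:shell-increment-control} follows by the same pointwise bound integrated against the shell cutoff, which equals one on $\supp\nabla\chi_k$, so that the integral is controlled by $C_{\mathrm{div}}^2\|\nabla\varphi\|_{L^1}^2\widetilde{\mathcal S}_{k,\mathrm{sh}}^{(p)}$. Taking $C_*\ge C_{\mathrm{div}}^2$ covers both uses of the constant.

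The main obstacle is justifying these manipulations for a suitable weak solution. Pointwise, \cref{lem:increment-divR} is exact, but we need $\Omega_k\in L^2_tH^1_{x,\mathrm{loc}}$ and $G_k\in L^2_{t,x,\mathrm{loc}}$ for the integration by parts and for the finiteness of the terms. Spatial filtering gives $C^\infty$ regularity in $x$ at almost every $t$, and the Leray--Hopf bounds give $\Omega_k\in L^\infty_tL^\infty_x$ locally. For $G_k$, the crude estimate $|G_k|\lesssim\ell_k^{-4}\|u\|_{L^2}^2$ gives square integrability on the compact support of $\chi_k$. The whole inequality holds trivially in the extended sense if $\widetilde{\mathcal S}_k^{(p)}=\infty$. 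I would also check that $P_k$ is taken with the cutoff $\chi_k$, as in \cref{sec:dyadic}, so no annular diffusion cost arises here; the only localization loss is the $\nabla\chi_k$ shell term above. The choice of $p\in[2,4]$ enters only through \eqref{eq:pointwise-divR-compatible}, which is valid for all $p\ge2$.
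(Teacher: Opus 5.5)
Your proposal is correct and follows the paper's proof essentially line by line. It uses the same curl integration-by-parts identity with $a=\Omega_k$ and $b=\nabla\!\cdot R_k$, Young's inequality against $P_k$ followed by \eqref{eq:pointwise-divR-p} for the bulk term, Young's inequality with $|\nabla\chi_k|\le C_\chi r_k^{-1}$ for the cutoff term, and the same pointwise lemma on the shell. Beyond that, you make explicit what the paper leaves implicit: the constants ($\sqrt2$ for the curl, $C_*=C_{\mathrm{div}}^2$) and the regularity needed to justify the integration by parts.
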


\begin{proof}
Use
\[
\int \chi a\cdot(\nabla\times b)
=
\int \chi b\cdot(\nabla\times a)
+
\int (\nabla\chi\times a)\cdot b.
\]
With $a=\Omega_k$ and $b=\nabla\!\cdot R_k$, this gives
\begin{align*}
F_k^{\mathrm{com}}
\le{}&
r_k\iint\chi_k|\nabla\!\cdot R_k|\,|\nabla\times\Omega_k|\,dxdt\\
&+
r_k\iint|\nabla\chi_k|\,|\Omega_k|\,|\nabla\!\cdot R_k|\,dxdt .
\end{align*}
Since $|\nabla\times\Omega_k|\le C|\nabla\Omega_k|$, Young's inequality yields
\[
r_k\iint\chi_k|\nabla\!\cdot R_k|\,|\nabla\times\Omega_k|
\le
\eta P_k
+
\frac{C}{\eta}r_k\iint\chi_k|\nabla\!\cdot R_k|^2.
\]
By \cref{lem:increment-divR}, the last integral is bounded by $C_{\mathrm{com}}^{\sharp}(\varphi)\widetilde{\mathcal S}_k^{(p)}$.  The cutoff term is estimated by Young and the derivative bound $|\nabla\chi_k|\le C_\chi r_k^{-1}$, which gives \eqref{eq:Lcomm-inc-def}.  Applying \cref{lem:increment-divR} on the shell gives \eqref{eq:shell-increment-control}.
\end{proof}

\begin{remark}[Energy-level fallback]\label{rem:energy-fallback}
If the increment observable is discarded and only the global energy is used, the crude estimate
\[
\|\nabla\!\cdot R_{\ell_k}(t)\|_{L^2}
\lesssim
\ell_k^{-5/2}\|u(t)\|_{L^2}^2
\]
gives $Q_{k,\mathrm{div}}^{\mathrm{com}}\lesssim M_E^2 2^{2k}$ when $M_E=\|u\|_{L_t^\infty L_x^2}^2$.  This recovers the older $\mathcal W_2$ loss.  The derivative-compatible theorem above keeps the actual scale-invariant increment defect instead of replacing it by that scale-worse energy bound.
\end{remark}

\begin{corollary}[Weighted commutator alternative]\label{cor:weighted-comm-increment}
For every nonnegative weight sequence $w$ and every $p\in[2,4]$,
\begin{equation}\label{eq:weighted-comm-increment}
\sum_{k=0}^N w_k F_k^{\mathrm{com}}
\le
\eta\sum_{k=0}^N w_k P_k
+
C_{\eta,\varphi}\sum_{k=0}^N w_k\widetilde{\mathcal S}_k^{(p)}
+
\sum_{k=0}^N w_k L_{k,\mathrm{inc}}^{\mathrm{com}}.
\end{equation}
\end{corollary}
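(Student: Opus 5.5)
The plan is to apply \cref{thm:increment-comm} scale by scale and then sum against the weights. Since every quantity in \eqref{eq:increment-comm} is nonnegative and each $w_k\ge0$, multiplying by $w_k$ preserves the inequality. Fix $p\in[2,4]$ and $\eta\in(0,1)$. For each $k=0,\dots,N$, \cref{thm:increment-comm} gives
\[
F_k^{\mathrm{com}}\le \eta P_k+\frac{C_{\mathrm{com}}^{\sharp}(\varphi)}{\eta}\widetilde{\mathcal S}_k^{(p)}+L_{k,\mathrm{inc}}^{\mathrm{com}}.
\]
The constant $C_{\mathrm{com}}^{\sharp}(\varphi)=C_*\|\nabla\varphi\|_{L^1}^2$ does not depend on $k$. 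This is because the dyadic cutoffs $\chi_k$ are parabolic rescalings of one fixed profile, and because \cref{lem:increment-divR} gives a bound with a universal $C_{\mathrm{div}}$.

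Next, multiply the $k$-th inequality by $w_k$ and sum over $0\le k\le N$. Setting
\[
C_{\eta,\varphi}:=\frac{C_{\mathrm{com}}^{\sharp}(\varphi)}{\eta},
\]
this yields \eqref{eq:weighted-comm-increment} directly. For finite $N$ all sums are finite, so no convergence question arises. If some $P_k$ or $\widetilde{\mathcal S}_k^{(p)}$ is infinite, the inequality holds trivially in the extended sense.

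The only point to check is uniformity in $k$, namely that $C_\chi$ in $L_{k,\mathrm{inc}}^{\mathrm{com}}$ and $C_{\mathrm{com}}^{\sharp}$ are independent of the scale. For $C_{\mathrm{com}}^{\sharp}$ this was noted above. For $C_\chi$ it follows because the bound $|\nabla\chi_k|\le C_\chi r_k^{-1}$ is scale invariant under the rescaling that defines $\chi_k$. The shell term is already part of $L_{k,\mathrm{inc}}^{\mathrm{com}}$ and is kept exactly as it is, so there is no real obstacle: the corollary is a summed restatement of the single-scale theorem.
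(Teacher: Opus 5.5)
Your proposal is correct and matches the paper's proof, which simply multiplies the single-scale estimate of \cref{thm:increment-comm} by $w_k\ge0$ and sums over $0\le k\le N$, with the $k$-independent constant $C_{\eta,\varphi}=C_{\mathrm{com}}^{\sharp}(\varphi)/\eta$. As minor remarks, only $w_k\ge0$ is needed for the multiplication step, and the uniformity of $C_\chi$ plays no role because $L_{k,\mathrm{inc}}^{\mathrm{com}}$ is carried through verbatim.
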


\begin{proof}
Sum \eqref{eq:increment-comm} over $k$ with weights $w_k$.
\end{proof}

\section{Weighted surplus inequality}\label{sec:weighted-surplus}

We now combine the near-field coercivity, far-field packing, and increment-based commutator insertion.

For each scale $k$, let $E_{k,\mathrm{in}}^\omega$ and $E_{k,\mathrm{out}}^\omega$ denote the endpoint terms in the filtered enstrophy identity on the corresponding time slab.  Split the diffusion budget as
\[
\eta=\eta_{\mathrm{near}}+\eta_{\mathrm{com}},
\qquad
\eta_{\mathrm{near}},\eta_{\mathrm{com}}>0,
\qquad
\eta_{\mathrm{near}}+\eta_{\mathrm{com}}<1.
\]
We define the positive post-near-field defect surplus by
\begin{equation}\label{eq:Sk-def}
\mathfrak S_k
:=
\left[
E_{k,\mathrm{out}}^\omega
+
(1-\eta_{\mathrm{near}}-\eta_{\mathrm{com}})P_k
-
E_{k,\mathrm{in}}^\omega
-
C_{\eta_{\mathrm{near}},\sigma,\rho,\varphi,\chi}M_k\mathcal O_k
\right]_+ .
\end{equation}
The constant is chosen large enough to absorb the lower-order filtered-enstrophy term produced by \cref{cor:fixed-ratio}.  Thus $\mathfrak S_k$ is a defect-balance quantity, not a pointwise component of the stretching tensor.

\begin{theorem}[Weighted surplus inequality for the filtered enstrophy balance]\label{thm:weighted-surplus}
Let $u$ be a Leray--Hopf solution on $\R^3$.  Fix $0<\sigma\le\rho\le1/4$, $r_k=2^{-k}$, $\ell_k=\sigma r_k$, and let $p\in[2,4]$.  Then for every finite $N$ and every $w\in\mathcal W_{3/2}$,
\begin{align}
\sum_{k=0}^{N}w_k\mathfrak S_k
\le{}&
C_{\rho,\sigma,\chi,\varphi}M_E^{3/2}
\|w\|_{\mathcal W_{3/2}}
+
C_{\eta_{\mathrm{com}},\varphi}\sum_{k=0}^{N}w_k\widetilde{\mathcal S}_k^{(p)}
\nonumber\\
&+
\sum_{k=0}^{N}w_k\bigl(L_k+L_{k,\mathrm{inc}}^{\mathrm{com}}\bigr).
\label{eq:weighted-surplus}
\end{align}
\end{theorem}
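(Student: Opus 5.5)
The plan is to derive the weighted surplus inequality one scale at a time and then sum with weights $w_k\ge0$. Fix $k$ and write the localized filtered enstrophy identity of Proposition~\ref{prop:enstrophy-identity} for the cutoff $\chi_k$ on the time slab of $Q_k$, with endpoint terms $E_{k,\mathrm{out}}^\omega$ and $E_{k,\mathrm{in}}^\omega$. The strain is split at distance $\rho r_k$ into near and far parts, and the signed stretching terms are bounded by their positive parts, $\mathcal V^{\mathrm{near}}\le V_k^{+,\mathrm{near}}$ and $\mathcal V^{\mathrm{rem}}\le V_k^{+,\mathrm{far}}$. The commutator term is bounded by $F_k^{\mathrm{com}}$, and the cutoff residual together with the annular diffusion cost is placed in $L_k$, as allowed by its definition. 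This gives
\[
E_{k,\mathrm{out}}^\omega+P_k-E_{k,\mathrm{in}}^\omega
\le V_k^{+,\mathrm{near}}+V_k^{+,\mathrm{far}}+F_k^{\mathrm{com}}+L_k .
\]

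The three right-hand terms are then estimated as follows.
\begin{enumerate}[label=\textup{(\alph*)}]
\item For the near-field term, apply Corollary~\ref{cor:fixed-ratio} with $\eps=1-\eta_{\mathrm{near}}$. This gives $V_k^{+,\mathrm{near}}\le\eta_{\mathrm{near}}\mathcal P^\rho_{r_k,\ell_k}+C\rho^2M_k\sigma^{-5}\mathcal O_k$. The enlarged diffusion $\mathcal P^\rho$ is rewritten as $P_k$ plus the annular budget, and that annular part also goes into $L_k$, as in Corollary~\ref{cor:balance-insertion}.
\item For the commutator term, Theorem~\ref{thm:increment-comm} with $\eta=\eta_{\mathrm{com}}$ gives $F_k^{\mathrm{com}}\le\eta_{\mathrm{com}}P_k+C_{\eta_{\mathrm{com}},\varphi}\widetilde{\mathcal S}_k^{(p)}+L^{\mathrm{com}}_{k,\mathrm{inc}}$.
\item For the far-field term, Theorem~\ref{thm:farfield-packing} gives $V_k^{+,\mathrm{far}}\le C M_E^{3/2}2^{3k/2}$.
\end{enumerate}

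Next, move $(\eta_{\mathrm{near}}+\eta_{\mathrm{com}})P_k$ to the left-hand side. The reservoir term $C M_k\mathcal O_k$ is absorbed by the constant built into $\mathfrak S_k$, which is chosen at least as large as the constant in (a). This yields
\[
E_{k,\mathrm{out}}^\omega+(1-\eta_{\mathrm{near}}-\eta_{\mathrm{com}})P_k-E_{k,\mathrm{in}}^\omega-CM_k\mathcal O_k
\le CM_E^{3/2}2^{3k/2}+C\widetilde{\mathcal S}_k^{(p)}+L_k+L^{\mathrm{com}}_{k,\mathrm{inc}} .
\]
The right-hand side is nonnegative, so taking positive parts gives the same bound for $\mathfrak S_k$. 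Finally, multiply by $w_k$ and sum over $0\le k\le N$. The far-field contributions add up to at most $CM_E^{3/2}\|w\|_{\mathcal W_{3/2}}$ by \eqref{eq:Ffar-pack}, and the remaining sums appear exactly as written in \eqref{eq:weighted-surplus}.

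The main obstacle is bookkeeping rather than analysis. First, the diffusion that appears in the identity, in Corollary~\ref{cor:fixed-ratio}, and in Theorem~\ref{thm:increment-comm} must be the same $P_k$, or else differ from it only by nonnegative annular budgets that are legitimately part of $L_k$. Second, $M_k$, the local energy on the enlarged ball, must be dominated by a quantity covered by the constant in $\mathfrak S_k$; if that constant is not allowed to depend on $M_k$, one would use $M_k\le 2^kM_E$. I would handle both points by fixing nested cutoffs at the outset, with $\chi_k\le1$ and $\supp\chi_k\subset Q_k$, and writing $\mathcal P^\rho-P_k\ge0$ explicitly as a component of $L_k$.
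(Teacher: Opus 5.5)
Your proposal is correct and follows the paper's proof essentially step by step. Both arguments start from the positive-part form of the localized enstrophy identity, absorb the near field via \cref{cor:fixed-ratio} with $\eps=1-\eta_{\mathrm{near}}$ (placing the annular diffusion into $L_k$), and apply \cref{thm:increment-comm} with parameter $\eta_{\mathrm{com}}$. They then take positive parts, multiply by $w_k$, sum, and use \cref{thm:farfield-packing}. Your bookkeeping is slightly cleaner than the paper's, which adds $L_k$ twice, and your fallback $M_k\le 2^kM_E$ is unnecessary because $\mathfrak S_k$ already subtracts $M_k\mathcal O_k$ explicitly.
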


\begin{proof}[Proof of \cref{thm:weighted-surplus}]
The localized filtered enstrophy identity at scale $k$, after taking positive parts of the remaining signed work terms, gives
\[
E_{k,\mathrm{out}}^\omega+P_k
\le
E_{k,\mathrm{in}}^\omega
+V_k^{+,\mathrm{near}}
+V_k^{+,\mathrm{far}}
+F_k^{\mathrm{com}}
+L_k.
\]
The near-field coercivity at fixed relative filter length gives
\[
V_k^{+,\mathrm{near}}
\le
\eta_{\mathrm{near}}P_k
+C_{\eta_{\mathrm{near}},\sigma,\rho,\varphi,\chi}M_k\mathcal O_k
+L_k.
\]
Therefore, after increasing the harmless constant in \eqref{eq:Sk-def},
\[
\mathfrak S_k
\le
V_k^{+,\mathrm{far}}
+F_k^{\mathrm{com}}
-\eta_{\mathrm{com}}P_k
+L_k.
\]
Applying \cref{thm:increment-comm} with parameter $\eta_{\mathrm{com}}$ gives
\[
F_k^{\mathrm{com}}-\eta_{\mathrm{com}}P_k
\le
C_{\eta_{\mathrm{com}},\varphi}\widetilde{\mathcal S}_k^{(p)}
+L_{k,\mathrm{inc}}^{\mathrm{com}}.
\]
Thus
\[
\mathfrak S_k
\le
V_k^{+,\mathrm{far}}
+C_{\eta_{\mathrm{com}},\varphi}\widetilde{\mathcal S}_k^{(p)}
+L_k+L_{k,\mathrm{inc}}^{\mathrm{com}}.
\]
Multiplying by $w_k$, summing over $k$, and using \cref{thm:farfield-packing} proves \eqref{eq:weighted-surplus}.
\end{proof}

\begin{corollary}[Conditional unweighted surplus bound]\label{cor:conditional-unweighted-surplus}
Assume, in addition to the hypotheses of \cref{thm:weighted-surplus}, that the full far-field contribution satisfies an unweighted Carleson closure of the form
\[
\sum_{k=0}^{N}V_k^{+,\mathrm{far}}
\le
\mathcal F_*
\qquad\text{with }\mathcal F_*\text{ independent of }N.
\]
For example, the reassigned annular part satisfies such a bound under the hypotheses of \cref{thm:conditional-carleson}, and any exterior tail should be controlled separately.  Then
\begin{align*}
\sum_{k=0}^{N}\mathfrak S_k
\le{}&
\mathcal F_*
+C_{\eta_{\mathrm{com}},\varphi}\sum_{k=0}^{N}\widetilde{\mathcal S}_k^{(p)}
+\sum_{k=0}^{N}\bigl(L_k+L_{k,\mathrm{inc}}^{\mathrm{com}}\bigr).
\end{align*}
\end{corollary}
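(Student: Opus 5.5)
The plan is to repeat the proof of \cref{thm:weighted-surplus} scale by scale, with the unweighted choice $w_k\equiv1$. The only change is that the far-field term is summed using the assumed Carleson closure in place of \cref{thm:farfield-packing}. The $\mathcal W_{3/2}$ bound cannot be used here: the constant sequence $w_k\equiv1$ lies in no $\mathcal W_\alpha$ with $\alpha>0$, and in any case that bound grows like $2^{3N/2}$.

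First, fix $k\le N$ and use the localized filtered enstrophy identity of Proposition~\ref{prop:enstrophy-identity} on the slab of scale $k$. Taking positive parts of the signed work terms gives
\[
E_{k,\mathrm{out}}^\omega+P_k\le E_{k,\mathrm{in}}^\omega+V_k^{+,\mathrm{near}}+V_k^{+,\mathrm{far}}+F_k^{\mathrm{com}}+L_k .
\]
Next, insert the fixed-ratio near-field coercivity, \cref{cor:fixed-ratio}, with parameter $\eta_{\mathrm{near}}$. Its annular diffusion cost is placed into $L_k$, exactly as in \cref{cor:balance-insertion}. Then apply \cref{thm:increment-comm} with $\eta=\eta_{\mathrm{com}}$ to absorb $F_k^{\mathrm{com}}-\eta_{\mathrm{com}}P_k$. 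Together these give the per-scale inequality already recorded in the proof of \cref{thm:weighted-surplus}:
\[
\mathfrak S_k\le V_k^{+,\mathrm{far}}+C_{\eta_{\mathrm{com}},\varphi}\widetilde{\mathcal S}_k^{(p)}+L_k+L_{k,\mathrm{inc}}^{\mathrm{com}}.
\]
Since $\mathfrak S_k$ is a positive part, this needs the right-hand side to be nonnegative, which holds because every term on the right is nonnegative.

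Finally, sum this inequality over $k=0,\dots,N$ and bound $\sum_k V_k^{+,\mathrm{far}}$ by $\mathcal F_*$ using the hypothesis. This gives the stated bound.

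The example clause, that the reassigned annular part satisfies such a bound, follows directly from \cref{thm:conditional-carleson}. One caveat: $V_k^{+,\mathrm{far}}$ may not be dominated by the annular part $\mu_k^{\mathrm{far,ann}}$ plus an exterior tail. The reason is that positive parts are subadditive but not additive, so the split holds only as an upper bound. This is why the hypothesis is imposed on the full far-field contribution, and why the exterior shells $m>k$ have to be controlled separately.

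The main point requiring care is the per-scale inequality. The positive-part definition of $\mathfrak S_k$, together with the constant in \eqref{eq:Sk-def} being large enough, must dominate the $M_k\mathcal O_k$ reservoir from \cref{cor:fixed-ratio}. I would check this by taking the constant in \eqref{eq:Sk-def} to be $C_{\eps,\vartheta,\varphi}\rho^2\sigma^{-5}$ with $1-\eps=\eta_{\mathrm{near}}$, and taking $M_k$ to be $M_{r_k,\rho}(u)$.
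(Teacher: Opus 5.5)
Your proposal is correct and is the paper's own argument: rerun the proof of \cref{thm:weighted-surplus} with $w_k\equiv1$ to get the per-scale bound $\mathfrak S_k\le V_k^{+,\mathrm{far}}+C_{\eta_{\mathrm{com}},\varphi}\widetilde{\mathcal S}_k^{(p)}+L_k+L_{k,\mathrm{inc}}^{\mathrm{com}}$, sum over $k\le N$, and replace the $\mathcal W_{3/2}$ packing by the assumed bound $\mathcal F_*$. Your side remark on the example clause is garbled, since subadditivity of positive parts gives $V_k^{+,\mathrm{far}}\le(\text{annular part})+(\text{exterior tail})$, which is domination rather than its failure, but that remark plays no role in the proof.
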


\begin{proof}
Repeat the proof of \cref{thm:weighted-surplus} with $w_k\equiv1$ and replace the weighted far-field packing estimate by the stated unweighted far-field closure.
\end{proof}

\begin{theorem}[Conditional unweighted closure and vanishing defect surplus]\label{thm:balance-closure-vanishing}
Work in the exact whole-space setting.  Assume that the principal cutoff residual is eliminated by an adjoint cutoff or is included in a summable nonnegative shell budget.  Suppose that
\begin{enumerate}[label=\textup{(\roman*)},leftmargin=2.2em]
\item the full far-field contribution is unweightedly summable, for instance by \cref{thm:conditional-carleson} together with a separate exterior-tail estimate;
\item for some $p\in[2,4]$,
\[
\sum_{k=0}^{\infty}\widetilde{\mathcal S}_k^{(p)}<\infty;
\]
\item the remaining shell budgets are summable:
\[
\sum_{k=0}^{\infty}\bigl(L_k+L_{k,\mathrm{inc}}^{\mathrm{com}}\bigr)<\infty.
\]
\end{enumerate}
Then
\[
\sum_{k=0}^{\infty}\mathfrak S_k<\infty,
\qquad
\mathfrak S_k\to0.
\]
Thus persistent positive post-near-field defect surplus cannot survive at arbitrarily small scales under these closure hypotheses.
\end{theorem}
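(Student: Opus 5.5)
The plan is to apply \cref{cor:conditional-unweighted-surplus} with $w_k\equiv1$ and then pass $N\to\infty$. The hypotheses (i)--(iii) are designed to make each term on its right-hand side bounded independently of $N$.

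\emph{Setting up the per-scale inequality.} First I will recall the per-scale inequality obtained in the proof of \cref{thm:weighted-surplus}:
\[
\mathfrak S_k\le V_k^{+,\mathrm{far}}+C_{\eta_{\mathrm{com}},\varphi}\widetilde{\mathcal S}_k^{(p)}+L_k+L_{k,\mathrm{inc}}^{\mathrm{com}}.
\]
It holds for each $k$ separately and uses no weight. The standing assumption on the cutoff matters at this point. If the principal cutoff residual is removed by an adjoint weight as in Proposition~\ref{prop:adjoint-residual}, then $\mathcal L_\chi=0$ and that piece of $L_k$ is absent. If instead it is included in a nonnegative shell budget, it is simply part of $L_k$. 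In either case $L_k$ is nonnegative and is covered by hypothesis (iii).

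\emph{Summing over scales.} Next I sum over $0\le k\le N$. Hypothesis (i) gives $\sum_{k\le N}V_k^{+,\mathrm{far}}\le\mathcal F_*$ with $\mathcal F_*$ independent of $N$. This bound consists of the annular part from \cref{thm:conditional-carleson} plus the separately assumed exterior-tail bound, and together these cover all of $V_k^{+,\mathrm{far}}$. Hypothesis (ii) bounds the commutator sum by $C_{\eta_{\mathrm{com}},\varphi}\sum_{k\ge0}\widetilde{\mathcal S}_k^{(p)}$. Hypothesis (iii) bounds the localization sum. Every summand is nonnegative, so each partial sum is dominated by the corresponding full series. This gives
\[
\sum_{k=0}^N\mathfrak S_k\le \mathcal F_*+C\sum_{k\ge0}\widetilde{\mathcal S}_k^{(p)}+\sum_{k\ge0}\bigl(L_k+L_{k,\mathrm{inc}}^{\mathrm{com}}\bigr)
\]
uniformly in $N$. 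Since $\mathfrak S_k\ge0$ by its definition as a positive part, the partial sums are monotone and bounded. The series therefore converges, and its terms satisfy $\mathfrak S_k\to0$.

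\emph{Expected obstacle.} The only delicate point is bookkeeping, not analysis. I must check that the constant $C_{\eta_{\mathrm{near}},\sigma,\rho,\varphi,\chi}$ in \eqref{eq:Sk-def} and the constants from \cref{cor:fixed-ratio} and \cref{thm:increment-comm} are uniform in $k$. This holds because $\ell_k=\sigma r_k$ and the cutoffs $\chi_k$ are parabolic rescalings of one fixed profile, so every quantity involved is scale invariant. I must also check that the localization term $L_k$ produced by the near-field step is counted only once. This amounts to a constant factor absorbed into hypothesis (iii). With these checks in place, the conclusion follows.
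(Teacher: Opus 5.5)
Your proposal is correct and is essentially the paper's proof. The paper likewise takes the per-scale bound $\mathfrak S_k\le V_k^{+,\mathrm{far}}+C_{\eta_{\mathrm{com}},\varphi}\widetilde{\mathcal S}_k^{(p)}+L_k+L_{k,\mathrm{inc}}^{\mathrm{com}}$ from the proof of \cref{thm:weighted-surplus}, sums it over $0\le k\le N$, uses (i)--(iii) to get an $N$-independent bound, and concludes from $\mathfrak S_k\ge0$; your bookkeeping remarks on uniformity of constants in $k$ and the at most twofold appearance of $L_k$ are points the paper passes over silently, and they do not change the argument.
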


\begin{proof}
The proof of \cref{thm:weighted-surplus} gives the pointwise-in-scale bound
\[
\mathfrak S_k
\le
V_k^{+,\mathrm{far}}
+C_{\eta_{\mathrm{com}},\varphi}\widetilde{\mathcal S}_k^{(p)}
+L_k+L_{k,\mathrm{inc}}^{\mathrm{com}}.
\]
Summing over $k=0,\dots,N$ and using the three hypotheses gives a bound independent of $N$ for $\sum_{k=0}^{N}\mathfrak S_k$.  Since $\mathfrak S_k\ge0$, the infinite sum is finite and hence $\mathfrak S_k\to0$.
\end{proof}

\begin{corollary}[Persistent defect-surplus alternative]\label{cor:filtered-increment-defect-alt}
For every $p\in[2,4]$,
\begin{equation}\label{eq:filtered-increment-defect-lower}
\sum_{k=0}^{N}w_k\widetilde{\mathcal S}_k^{(p)}
\ge
\frac1{C_{\eta_{\mathrm{com}},\varphi}}
\left[
\sum_{k=0}^{N}w_k\mathfrak S_k
-
C_{\rho,\sigma,\chi,\varphi}M_E^{3/2}\|w\|_{\mathcal W_{3/2}}
-
\sum_{k=0}^{N}w_k(L_k+L_{k,\mathrm{inc}}^{\mathrm{com}})
\right]_+.
\end{equation}
Consequently, if the weighted post-near-field defect surplus persists after the far-field and localization budgets are removed, then a weighted increment defect persists.  This statement concerns the localized filtered enstrophy balance: the commutator term is not a component of the pointwise stretching tensor, but the forcing term that should pay any remaining positive defect surplus.
\end{corollary}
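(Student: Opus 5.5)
The plan is to obtain \eqref{eq:filtered-increment-defect-lower} as a rearrangement of the weighted surplus inequality \cref{thm:weighted-surplus}, with no new analytic estimate. Fix $p\in[2,4]$, $N$, and a weight $w\in\mathcal W_{3/2}$. For $w\notin\mathcal W_{3/2}$ the norm is $+\infty$, so the bracket in \eqref{eq:filtered-increment-defect-lower} is $-\infty$ and its positive part vanishes; the claim then reduces to the trivial bound $\sum w_k\widetilde{\mathcal S}_k^{(p)}\ge0$. Nonnegativity of every $\widetilde{\mathcal S}_k^{(p)}$ follows from \eqref{eq:S-p-def}, since $\chi_k\ge0$ and $\mathfrak M_{\ell,p}\ge0$.

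For $w\in\mathcal W_{3/2}$, I start from \eqref{eq:weighted-surplus}. I move the far-field term $C_{\rho,\sigma,\chi,\varphi}M_E^{3/2}\|w\|_{\mathcal W_{3/2}}$ and the localization sum $\sum_k w_k(L_k+L_{k,\mathrm{inc}}^{\mathrm{com}})$ to the left-hand side. This gives
\[
\sum_{k=0}^{N}w_k\mathfrak S_k
-C_{\rho,\sigma,\chi,\varphi}M_E^{3/2}\|w\|_{\mathcal W_{3/2}}
-\sum_{k=0}^{N}w_k(L_k+L_{k,\mathrm{inc}}^{\mathrm{com}})
\le C_{\eta_{\mathrm{com}},\varphi}\sum_{k=0}^{N}w_k\widetilde{\mathcal S}_k^{(p)}.
\]
The constant $C_{\eta_{\mathrm{com}},\varphi}$ is positive, so I divide by it.

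The right-hand side is nonnegative, so the left-hand side may be replaced by its positive part. This is exactly \eqref{eq:filtered-increment-defect-lower}.

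The only point that needs care is finiteness. The subtracted quantities must be finite, or else the inequality is read in the extended sense, where it is trivial whenever the bracket is $-\infty$. If the surplus sum is finite but a localization budget is infinite, the bracket is again $-\infty$. If the increment sum is $+\infty$, the inequality holds trivially.

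The consequence is then read off directly. Suppose the bracket stays bounded below by some $c>0$ along $N\to\infty$, or along a family of weights. Then $\sum_k w_k\widetilde{\mathcal S}_k^{(p)}\ge c/C_{\eta_{\mathrm{com}},\varphi}$, which is the statement that a weighted increment defect persists.

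I expect no real obstacle. The only step worth checking is that the constants match those of \cref{thm:weighted-surplus}, since the corollary is stated with the same $C_{\rho,\sigma,\chi,\varphi}$ and $C_{\eta_{\mathrm{com}},\varphi}$.
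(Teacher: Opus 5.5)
Your proposal is correct and is essentially the paper's argument. The paper gives no separate proof of this corollary and treats it as an immediate rearrangement of the weighted surplus inequality \eqref{eq:weighted-surplus} in \cref{thm:weighted-surplus}: move the far-field and localization terms to the left, divide by $C_{\eta_{\mathrm{com}},\varphi}>0$, and take the positive part using $w_k\widetilde{\mathcal S}_k^{(p)}\ge0$. Your extended-value discussion for $w\notin\mathcal W_{3/2}$ is extra, since the corollary is implicitly stated for $w\in\mathcal W_{3/2}$ as in that theorem.
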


\begin{remark}[From energy loss to scale-invariant increment defect]\label{rem:compare-increment-energy}
The increment-based theorem removes the forced scale-worse energy loss and replaces it by the genuine scale-invariant increment defect $\sum w_k\widetilde{\mathcal S}_k^{(p)}$.  Thus commutator-supported defect surplus is accompanied by an explicit derivative-compatible velocity-increment defect, which can subsequently be analyzed through compactness and rigidity.
\end{remark}

\section{Obstruction-profile interpretation}\label{sec:obstruction}

The preceding theorem is a finite-chain inequality.  It also has a compactness interpretation.  The quantity $\mathbb S_\ell\Omega_\ell\cdot\Omega_\ell$ is the stretching work, while the commutator term enters only through the localized filtered enstrophy balance.  Thus the assertion is not that the commutator term is a pointwise part of the stretching tensor.  Rather, after the near-field term has been absorbed, any positive defect surplus not paid by far-field work or localization should be paid by the differentiated commutator forcing.  The increment estimate of \cref{thm:increment-comm} converts this forcing into the scale-critical increment defect.

\subsection{Defect surplus forces the commutator increment defect}

We first record the precise algebraic form of this statement.  Split the diffusion budget as
\[
\eta=\eta_{\mathrm{near}}+\eta_{\mathrm{com}},
\qquad
\eta_{\mathrm{near}},\eta_{\mathrm{com}}>0.
\]
For a normalized sequence at a fixed relative filter length $\ell=\sigma r$, let
\[
E_{n,\mathrm{in}}^\omega,
\quad
E_{n,\mathrm{out}}^\omega,
\quad
P_n,
\quad
\mathcal O_n,
\quad
F_n^{\mathrm{far}},
\quad
L_n,
\quad
L_n^{\mathrm{com}}
\]
denote the corresponding endpoint enstrophy, diffusion, reservoir, far-field work, localization residual, and commutator shell residual terms after rescaling to a fixed unit cylinder.  Define the post-near-field defect surplus
\begin{equation}\label{eq:defect-surplus-B}
\mathfrak B_n
:=
\left[
E_{n,\mathrm{out}}^\omega
+
(1-\eta_{\mathrm{near}}-\eta_{\mathrm{com}})P_n
-
E_{n,\mathrm{in}}^\omega
-
C_{\eta,\sigma}M\mathcal O_n
-
F_n^{\mathrm{far}}
-
L_n
-
L_n^{\mathrm{com}}
\right]_+ .
\end{equation}

\begin{proposition}[Defect surplus forces the commutator increment defect]\label{prop:balance-to-comm-young}
Assume the near-field estimate holds with parameter $\eta_{\mathrm{near}}$ and the commutator insertion \cref{thm:increment-comm} holds with parameter $\eta_{\mathrm{com}}$ and $p=3$.  Set $C_{\eta_{\mathrm{com}},\varphi}^{\sharp}:=C_{\mathrm{com}}^{\sharp}(\varphi)/\eta_{\mathrm{com}}$.  If a normalized sequence satisfies
\[
\mathfrak B_n\ge s_0>0,
\]
then
\begin{equation}\label{eq:balance-to-comm-liminf}
\liminf_{n\to\infty}\widetilde{\mathcal S}_n^{(3)}
\ge
\frac{s_0}{C_{\eta_{\mathrm{com}},\varphi}^{\sharp}}
=
\frac{\eta_{\mathrm{com}}}{C_{\mathrm{com}}^{\sharp}(\varphi)}\,s_0.
\end{equation}
Equivalently, using \eqref{eq:Ccomm-def},
\begin{equation}\label{eq:balance-to-comm-explicit}
\liminf_{n\to\infty}\widetilde{\mathcal S}_n^{(3)}
\ge
\frac{\eta_{\mathrm{com}}}{C_{\mathrm{com}}^{\sharp}(\varphi)}\,s_0.
\end{equation}
In particular, after the far-field, principal localization residual, and commutator shell residual have been eliminated or shown to be negligible, any persistent positive defect surplus forces a persistent commutator increment defect.
\end{proposition}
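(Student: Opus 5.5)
The plan is to repeat, at each index $n$ of the normalized sequence, the scale-by-scale balance chain from the proof of \cref{thm:weighted-surplus}, with the far-field and localization terms now moved inside the positive part defining $\mathfrak B_n$. The target is a pointwise-in-$n$ inequality
\[
\mathfrak B_n\le C^{\sharp}_{\eta_{\mathrm{com}},\varphi}\,\widetilde{\mathcal S}_n^{(3)}.
\]
Once this is in hand, $\mathfrak B_n\ge s_0$ gives $\widetilde{\mathcal S}_n^{(3)}\ge s_0/C^{\sharp}_{\eta_{\mathrm{com}},\varphi}$ for every $n$, and taking $\liminf$ gives \eqref{eq:balance-to-comm-liminf}. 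The explicit form \eqref{eq:balance-to-comm-explicit} is then just the definition $C^{\sharp}_{\eta_{\mathrm{com}},\varphi}=C^{\sharp}_{\mathrm{com}}(\varphi)/\eta_{\mathrm{com}}$.

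The steps, in order, are as follows.

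First, start from the localized filtered enstrophy identity of Proposition~\ref{prop:enstrophy-identity}, rescaled to the unit cylinder; all quantities involved are scale invariant at fixed $\sigma$. Bounding signed terms by their positive parts gives
\[
E_{n,\mathrm{out}}^\omega+P_n\le E_{n,\mathrm{in}}^\omega+V_n^{+,\mathrm{near}}+F_n^{\mathrm{far}}+F_n^{\mathrm{com}}+L_n.
\]

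Second, insert the near-field estimate with parameter $\eta_{\mathrm{near}}$ (\cref{cor:fixed-ratio}, together with the annular budget as in \cref{cor:balance-insertion}). This gives
\[
V_n^{+,\mathrm{near}}\le\eta_{\mathrm{near}}P_n+C_{\eta,\sigma}M\mathcal O_n+L_n.
\]
The constant $C_{\eta,\sigma}$ in \eqref{eq:defect-surplus-B} is taken to be the one produced here, and the annular diffusion cost is assumed to sit inside $L_n$, as in the convention for $L_k$.

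Third, apply \cref{thm:increment-comm} with $p=3$ and parameter $\eta_{\mathrm{com}}$:
\[
F_n^{\mathrm{com}}\le\eta_{\mathrm{com}}P_n+\frac{C^{\sharp}_{\mathrm{com}}(\varphi)}{\eta_{\mathrm{com}}}\widetilde{\mathcal S}_n^{(3)}+L_n^{\mathrm{com}}.
\]

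Fourth, rearrange. The quantity inside the bracket of \eqref{eq:defect-surplus-B} is then at most $C^{\sharp}_{\eta_{\mathrm{com}},\varphi}\widetilde{\mathcal S}_n^{(3)}$. Since the right-hand side is nonnegative, the same bound holds for the positive part $\mathfrak B_n$.

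The main obstacle is bookkeeping, not analysis. The localization term $L_n$ is used twice, once from the identity and once from the annular diffusion cost in the near-field step, while $\mathfrak B_n$ subtracts it only once. I would handle this in one of two ways. The first option is to define $L_n$ as the full nonnegative localization budget, which already contains the annular diffusion and cutoff residuals, so that it appears once. Otherwise, I would absorb the duplicate, as in the proof of \cref{thm:weighted-surplus} ("after increasing the harmless constant"), by declaring that $L_n$ in \eqref{eq:defect-surplus-B} denotes the sum of all such budgets. The remaining checks are minor: $F_n^{\mathrm{far}}$ must denote the positive far-field work $V_n^{+,\mathrm{far}}$, and the normalizations of $P_n$ and $\widetilde{\mathcal S}_n^{(3)}$ must match those in \cref{thm:increment-comm} after rescaling. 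Both follow from the scaling invariance noted after \eqref{eq:S-p-def}.
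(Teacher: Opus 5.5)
Your proposal is correct and follows the paper's proof essentially line for line. The paper combines three ingredients: the enstrophy inequality with signed terms replaced by their positive parts, the near-field bound with parameter $\eta_{\mathrm{near}}$, and \cref{thm:increment-comm} with $\eta_{\mathrm{com}}$ and $p=3$. Together these give $\mathfrak B_n\le C^{\sharp}_{\eta_{\mathrm{com}},\varphi}\widetilde{\mathcal S}_n^{(3)}$ for every $n$, and the $\liminf$ bound follows at once.

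The only difference is that you track the annular diffusion cost in the near-field step. The paper's proof omits it there, so $L_n$ enters only once. Your fix, taking $L_n$ to be the full nonnegative localization budget, is the right way to make that bookkeeping consistent.
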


\begin{proof}
The localized filtered enstrophy identity gives, 
\[
E_{n,\mathrm{out}}^\omega+P_n
\le
E_{n,\mathrm{in}}^\omega
+V_n^{+,\mathrm{near}}
+F_n^{\mathrm{far}}
+F_n^{\mathrm{com}}
+L_n.
\]
The near-field coercivity with parameter $\eta_{\mathrm{near}}$ gives
\[
V_n^{+,\mathrm{near}}
\le
\eta_{\mathrm{near}}P_n
+
C_{\eta,\sigma}M\mathcal O_n.
\]
Hence
\[
E_{n,\mathrm{out}}^\omega
+(1-\eta_{\mathrm{near}})P_n
-
E_{n,\mathrm{in}}^\omega
-C_{\eta,\sigma}M\mathcal O_n
-F_n^{\mathrm{far}}
-L_n
\le
F_n^{\mathrm{com}}.
\]
Using \cref{thm:increment-comm} with parameter $\eta_{\mathrm{com}}$ and $p=3$,
\[
F_n^{\mathrm{com}}
\le
\eta_{\mathrm{com}}P_n
+
C_{\eta_{\mathrm{com}},\varphi}^{\sharp}\widetilde{\mathcal S}_n^{(3)}
+
L_n^{\mathrm{com}}.
\]
Moving $\eta_{\mathrm{com}}P_n$ and $L_n^{\mathrm{com}}$ to the left gives
\[
\mathfrak B_n
\le
C_{\eta_{\mathrm{com}},\varphi}^{\sharp}\widetilde{\mathcal S}_n^{(3)}.
\]
The lower bound \eqref{eq:balance-to-comm-liminf} follows immediately.
\end{proof}

\begin{remark}[The role of the commutator term]\label{rem:balance-not-pointwise-stretching}
The commutator term enters the filtered vorticity equation as a forcing term.  The balance formulation captures this distinction exactly: after near-field absorption and removal of the far-field and localization budgets, any remaining positive balance is paid by commutator forcing.  The increment estimate then identifies the scale-critical object carrying that payment.
\end{remark}

\subsection{Young profiles for derivative-compatible increment defects}

We now describe a conservative compactness object associated with a bounded nonvanishing $p=3$ derivative-compatible increment defect.  Fix a filter ratio $\sigma>0$ and let $c_\varphi$ be such that $\supp\varphi\subset B_{c_\varphi}$.  At unit scale define
\[
d\nu_\sigma(z):=\varphi_\sigma(z)\,dz,
\qquad
 d\mu_\sigma(z):=\frac{\sigma|\nabla\varphi_\sigma(z)|}{\|\nabla\varphi\|_{L^1}}\,dz.
\]
The natural increment space is
\begin{equation}\label{eq:Esharp-def}
E^\sharp_\sigma
:=
L^3(B_{c_\varphi\sigma},d\nu_\sigma;\R^3)
\times
L^3(B_{c_\varphi\sigma},d\mu_\sigma;\R^3),
\end{equation}
endowed with the sum norm.  This is a separable reflexive Banach space.  For a normalized sequence $u^{(n)}$, set
\begin{equation}\label{eq:Vsharp-def}
V^\sharp_n(x,t)(z)
:=
\bigl(\delta_z u^{(n)}(x,t),\delta_z u^{(n)}(x,t)\bigr)
\in E^\sharp_\sigma,
\qquad
\delta_z u^{(n)}(x,t)=u^{(n)}(x-z,t)-u^{(n)}(x,t).
\end{equation}
Then, at unit scale,
\begin{equation}\label{eq:S-Vsharp}
\widetilde{\mathcal S}^{(3)}_n
=
\sigma^{-2}\iint\chi(x,t)\|V^\sharp_n(x,t)\|_{E^\sharp_\sigma}^{4}\,dxdt.
\end{equation}

\begin{theorem}[Cylindrical commutator increment profile]\label{thm:young-extraction}
Let $Q^+$ be a fixed parabolic cylinder containing all spatial shifts of $Q$ by vectors in $B_{c_\varphi\sigma}$.  Suppose $u^{(n)}$ is bounded in $L^3(Q^+)$ and
\[
\sup_n\widetilde{\mathcal S}^{(3)}_n<\infty.
\]
Then, after passing to a subsequence, the derivative-compatible increment fields $V^\sharp_n$ generate a cylindrical generalized Young profile on $E^\sharp_\sigma$.  More precisely, for every finite family of continuous linear functionals $\ell_1,\dots,\ell_m\in(E^\sharp_\sigma)^*$, the projected maps
\[
(\ell_1(V^\sharp_n),\dots,\ell_m(V^\sharp_n))
\]
generate a finite-dimensional DiPerna--Majda generalized Young measure, and these projected measures are consistent under further projections.  If $u^{(n)}\rightharpoonup u$ weakly in $L^3(Q^+)$, then the cylindrical barycenter is the resolved derivative-compatible increment field
\[
V^\sharp(x,t)(z)=\bigl(u(x-z,t)-u(x,t),u(x-z,t)-u(x,t)\bigr).
\]
\end{theorem}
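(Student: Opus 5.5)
Our plan is to reduce the infinite-dimensional statement to the classical finite-dimensional DiPerna--Majda theory, projection by projection, and then use a diagonal argument to obtain consistency.

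\emph{Uniform bound.} For fixed $\ell_1,\dots,\ell_m\in(E^\sharp_\sigma)^*$, set $\Phi_n:=(\ell_1(V^\sharp_n),\dots,\ell_m(V^\sharp_n)):Q\to\R^m$. Then $|\Phi_n|\le C_{\ell}\|V^\sharp_n\|_{E^\sharp_\sigma}$ pointwise. By \eqref{eq:S-Vsharp}, on the region where $\chi\ge c>0$, the hypothesis $\sup_n\widetilde{\mathcal S}^{(3)}_n<\infty$ gives a uniform $L^4$ bound on $\|V^\sharp_n\|_{E^\sharp_\sigma}$, and hence a uniform $L^1$ (indeed $L^4$) bound on $\Phi_n$. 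Alternatively, Minkowski's inequality together with the $L^3(Q^+)$ bound and the inclusion of shifts in $Q^+$ controls $\|V^\sharp_n\|_{L^3(Q;E^\sharp_\sigma)}\lesssim\|u^{(n)}\|_{L^3(Q^+)}$. This second route avoids degeneration of $\chi$, so we would use it to get a uniform $L^p$ bound with $p=3$ on the whole cylinder.

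\emph{Projected measures.} The DiPerna--Majda / Alibert--Bouchitt\'e compactness theorem applies to the bounded sequence $\Phi_n$ in $L^p(Q;\R^m)$, with the compactification of $\R^m$ by the sphere at infinity and growth weight $(1+|\lambda|)^p$. After passing to a subsequence, this yields a triple $(\nu^{\ell}_{x,t},\lambda^{\ell},\nu^{\ell,\infty}_{x,t})$.

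\emph{Consistency.} Choose a countable family $\{\ell_i\}$ dense in the separable dual $(E^\sharp_\sigma)^*$; this is possible since $E^\sharp_\sigma$ is separable and reflexive. We apply the previous step to each finite initial segment $(\ell_1,\dots,\ell_m)$ and extract a diagonal subsequence. Consistency under a linear projection $\pi:\R^m\to\R^{m'}$ is checked on test integrands $g\circ\pi$: these have the same $p$-growth, and their recession functions compose correctly. Hence the pushforwards of the $m$-profile agree with the $m'$-profile, as equal limits of the same integrals. To pass from the dense family to an arbitrary finite family $\ell_1',\dots,\ell_k'$, we approximate in the dual norm. The difference $|\Phi_n-\Phi_n'|\le\eps\|V^\sharp_n\|$ is uniformly small in $L^p$, so the limiting functionals of $p$-growth integrands form a Cauchy net. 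This determines the projected measure uniquely without further extraction.

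\emph{Barycenter.} This is the step we expect to be the main obstacle, because concentration could in principle shift mass. If $u^{(n)}\rightharpoonup u$ in $L^3(Q^+)$, then for each $\ell_i$ and each test function $\psi\in C_c(Q)$ we have
$\iint\psi\,\ell_i(V^\sharp_n)\to\iint\psi\,\ell_i(V^\sharp)$.
Indeed, $\ell_i(V^\sharp_n)(x,t)$ is given by integration of $\delta_zu^{(n)}(x,t)$ against $L^{3/2}$ densities times $d\nu_\sigma$ and $d\mu_\sigma$. By Fubini, the pairing is a bounded linear functional of $u^{(n)}\in L^3(Q^+)$.

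The identity $\int\lambda\,d\nu^{\ell}=\ell(V^\sharp)$ then requires that the linear integrand produce no concentration contribution. Here we use that $p=3>1$: the linear integrand $\lambda$ has growth strictly below the $p$-growth weight, so its recession function relative to $(1+|\lambda|)^3$ vanishes. Equivalently, the bound on $\Phi_n$ in $L^3$ gives equi-integrability, so weak-$L^1$ limits coincide with Young-measure barycenters. We conclude that the cylindrical barycenter equals $V^\sharp$.
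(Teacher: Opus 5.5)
Your proof is correct and shares the paper's skeleton: finite-dimensional projections, DiPerna--Majda for each projection, diagonal extraction over a countable family in $(E^\sharp_\sigma)^*$, and barycenter identification via boundedness of the increment map $L^3(Q^+)\to L^3(Q;E^\sharp_\sigma)$. Where you differ is the bound that generates the profiles.

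The paper reads \eqref{eq:S-Vsharp} as a bound on $\chi^{1/4}V^\sharp_n$ in $L^4(Q;E^\sharp_\sigma)$, and so obtains quartic-growth profiles with respect to $\chi\,dx\,dt$. You use the unweighted $L^3(Q;E^\sharp_\sigma)$ bound inherited from $L^3(Q^+)$.

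\emph{What your route buys.} It avoids the degeneracy of $\chi$. It makes transparent that the concentration part does not act on linear integrands, and hence the barycenter identification. It proves the theorem as stated; in fact it never uses $\sup_n\widetilde{\mathcal S}^{(3)}_n<\infty$.

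\emph{What it gives up.} The growth class is only cubic. A cubic-growth profile cannot represent quartic integrands such as $|\ell(\Xi)|^4$. Nor, under Definition~\ref{def:full-ym-assumption}, can it represent the functional $\|\Xi\|_{E^\sharp_\sigma}^4$ that Proposition~\ref{prop:comm-stress-defect} needs for the quartic-excess/Jensen-gap statement. So the paper's weighted quartic profile is the one that feeds the rest of Section~\ref{sec:obstruction}. Even there, your $L^3$ equi-integrability is still the right tool for the barycenter, since linear integrands are subquartic.

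\emph{Consistency.} Your argument via test integrands $g\circ\pi$, and your extension from a dense countable family to arbitrary finite families by approximation in the dual norm, fill in what the paper compresses into one sentence about a ``separating family.''

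The Cauchy-net step needs one more line than you give it. Admissible $p$-growth integrands are not uniformly continuous, so uniform $L^p$-smallness of $\Phi_n-\Phi_n'$ does not by itself control $\iint|g(\Phi_n)-g(\Phi_n')|$ in the presence of concentration. The fix is as follows.
\begin{itemize}
\item Write $g=h\,(1+|\cdot|^p)$, with $h$ uniformly continuous on the sphere compactification.
\item Use $\bigl|\tfrac{a}{1+|a|}-\tfrac{b}{1+|b|}\bigr|\le 2|a-b|/(1+\max\{|a|,|b|\})$.
\item Split $Q$ according to whether $1+|\Phi_n|\ge\sqrt{\eps}\,\|V^\sharp_n\|_{E^\sharp_\sigma}$.
\end{itemize}
On the first set the compactification distance is $O(\sqrt{\eps})$. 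On the second, $1+|\Phi_n|^p\le\eps^{p/2}\|V^\sharp_n\|^p$. Together with $\bigl||a|^p-|b|^p\bigr|\lesssim\eps\|V^\sharp_n\|^p$, both pieces tend to $0$ as $\eps\to0$, uniformly in $n$.
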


\begin{proof}
The bound on $\widetilde{\mathcal S}^{(3)}_n$ is exactly a bound on $\chi^{1/4}V^\sharp_n$ in the reflexive space $L^4(Q;E^\sharp_\sigma)$.  For any finite family of linear functionals on $E^\sharp_\sigma$, the projected sequence is bounded in a finite-dimensional $L^4$ space.  The finite-dimensional DiPerna--Majda compactness theorem yields a generalized Young measure for that projection.  A diagonal extraction over a countable separating family of linear functionals gives a projectively consistent cylindrical profile.

If $u^{(n)}\rightharpoonup u$ in $L^3(Q^+)$, the increment operator $u\mapsto u(\cdot-z,\cdot)-u(\cdot,\cdot)$ is bounded from $L^3(Q^+)$ into $L^3(Q;E^\sharp_\sigma)$.  Hence the weak barycenter of every projected Young measure is the projection of the resolved increment field.  This identifies the cylindrical barycenter.
\end{proof}

\begin{remark}[Cylindrical versus full Young measures]\label{rem:cylindrical-vs-full}
A bounded sequence in an infinite-dimensional Banach space need not be tight in the norm topology merely because all finite-dimensional projections are controlled.  Therefore the cylindrical formulation is the unconditional compactness statement proved here.  Statements involving non-cylindrical functionals, such as the full norm $\|\Xi\|_{E^\sharp_\sigma}^{4}$ or the commutator covariance map, require an additional full-representation hypothesis.
\end{remark}

\subsection{Conditional full-profile consequences}

\begin{definition}[Full increment Young representation]\label{def:full-ym-assumption}
We say that $V^\sharp_n$ admits a full generalized Young representation on $E^\sharp_\sigma$ if, after subsequence extraction, there exist a generalized Young measure $(\nu_{x,t},\lambda,\nu^\infty_{x,t})$ and the standard representation formulae hold for the full norm functional
\[
G(\Xi)=\|\Xi\|_{E^\sharp_\sigma}^{4}
\]
and for the quadratic commutator covariance map below.  This is an additional compactness hypothesis, not a consequence of \cref{thm:young-extraction} alone.
\end{definition}

\begin{proposition}[Unresolved excess and commutator stress defects under full representation]\label{prop:comm-stress-defect}
Assume the hypotheses of \cref{thm:young-extraction} and the full representation property of \cref{def:full-ym-assumption}.  Then
\[
\liminf_{n\to\infty}\widetilde{\mathcal S}^{(3)}_n
\ge
\widetilde{\mathcal S}^{(3)}[u]+\mathcal D^{(3)}_\sigma,
\]
where $\mathcal D^{(3)}_\sigma\ge0$ is the Jensen/concentration gap.  If the left-hand side is strictly larger than $\widetilde{\mathcal S}^{(3)}[u]$, then either the oscillation measure is non-Dirac on a set of positive measure or the concentration measure is nonzero.

Define the covariance map, using the $d\nu_\sigma$ component, by
\begin{equation}\label{eq:covariance-map-safe}
\mathcal C(\Xi)
:=
\int\varphi_\sigma(z)\Xi_\nu(z)\otimes\Xi_\nu(z)\,dz
-
\left(\int\varphi_\sigma(z)\Xi_\nu(z)\,dz\right)^{\otimes2}.
\end{equation}
It has quadratic growth.  Hence, under the full representation hypothesis,
\[
R_\sigma[u^{(n)}]\rightharpoonup R^{\mathrm{YM}}_\sigma:=\langle\nu_{x,t},\mathcal C\rangle
\qquad\text{weakly in }L^2_{\mathrm{loc}}.
\]
If
\[
D:=R^{\mathrm{YM}}_\sigma-R_\sigma[u]\ne0,
\]
then the full increment profile is nontrivial.  If the concentration measure vanishes, then the oscillation measure is non-Dirac on a set of positive measure.
\end{proposition}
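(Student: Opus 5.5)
The argument runs inside the full generalized Young representation of \cref{def:full-ym-assumption}. It applies the DiPerna--Majda representation formula to the two nonlinear functionals named there, $G(\Xi)=\|\Xi\|_{E^\sharp_\sigma}^4$ and $\mathcal C$. Each is then compared with its value at the barycenter via Jensen's inequality.

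\emph{Lower bound for the defect.} By \eqref{eq:S-Vsharp}, $\widetilde{\mathcal S}^{(3)}_n=\sigma^{-2}\iint\chi\,G(V^\sharp_n)$. The function $G$ is continuous, convex and positively $4$-homogeneous, and $V^\sharp_n$ is bounded in $L^4(Q;E^\sharp_\sigma)$. Under full representation, along the extracted subsequence (first pass to a subsequence realizing the $\liminf$),
\[
\lim_n\iint\chi\,G(V^\sharp_n)=\iint\chi\langle\nu_{x,t},G\rangle\,dx\,dt+\int\chi\langle\nu^\infty_{x,t},G^\infty\rangle\,d\lambda,
\]
with recession function $G^\infty=G$ on the unit sphere, which is nonnegative. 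The barycenter of $\nu_{x,t}$ is $V^\sharp(x,t)$ by \cref{thm:young-extraction}. Pointwise Jensen for convex $G$ therefore gives $\langle\nu_{x,t},G\rangle\ge G(V^\sharp(x,t))$. Integrating against $\sigma^{-2}\chi$ yields $\widetilde{\mathcal S}^{(3)}[u]$. We set
\[
\mathcal D^{(3)}_\sigma:=\sigma^{-2}\Bigl[\iint\chi\bigl(\langle\nu,G\rangle-G(V^\sharp)\bigr)+\int\chi\langle\nu^\infty,G\rangle\,d\lambda\Bigr]\ge0.
\]
Strict excess makes one of the two brackets positive. A positive second term means $\lambda\ne0$ on $\{\chi>0\}$. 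If instead the first term is positive, then $\nu_{x,t}$ cannot equal $\delta_{V^\sharp(x,t)}$ a.e., because then Jensen would be an equality. So the oscillation measure is non-Dirac on a set of positive measure. This gives the dichotomy.

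\emph{Covariance and stress defect.} I will check that $\mathcal C$ is continuous on $E^\sharp_\sigma$ with $|\mathcal C(\Xi)|\le 2\|\Xi_\nu\|_{L^2(\nu_\sigma)}^2\le 2\|\Xi\|^2_{E^\sharp_\sigma}$, using $L^3(\nu_\sigma)\subset L^2(\nu_\sigma)$ for a probability measure. This is the claimed quadratic growth. By the cumulant identity \eqref{eq:cumulant-R}, $R_\sigma[u^{(n)}](x,t)=\mathcal C(V^\sharp_n(x,t))$ pointwise. The $L^4$ bound on $V^\sharp_n$ gives $|\mathcal C(V^\sharp_n)|^2\lesssim\|V^\sharp_n\|^4$, which is uniformly integrable relative to the quartic control only up to concentration. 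Since $\mathcal C$ has growth strictly below $4$, its recession part against $G$ vanishes, so the concentration part does not contribute. The full representation formula then gives $R_\sigma[u^{(n)}]\rightharpoonup\langle\nu_{x,t},\mathcal C\rangle$ weakly in $L^2_{\mathrm{loc}}$. The sequence $\mathcal C(V^\sharp_n)$ is bounded in $L^2$, so weak $L^2$ limits exist and coincide with the representation, after testing with $L^\infty$ functions and using density.

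\emph{Conclusion.} Suppose $\nu_{x,t}=\delta_{V^\sharp(x,t)}$ a.e. Then $\langle\nu_{x,t},\mathcal C\rangle=\mathcal C(V^\sharp)=R_\sigma[u]$, the latter again by \eqref{eq:cumulant-R} applied to $u$, so $D=0$. Hence $D\ne0$ forces a non-Dirac $\nu_{x,t}$ on a positive-measure set. In particular the profile is nontrivial regardless of $\lambda$, and this conclusion persists when the concentration measure vanishes.

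The main obstacle is justifying that the sub-quartic functional $\mathcal C$ sees no concentration and passes to the weak $L^2$ limit. I would derive this from the full-representation hypothesis together with the growth gap between $\mathcal C$ and $G$, rather than attempt to prove tightness in the infinite-dimensional space.
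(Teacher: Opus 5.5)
Your proposal is correct and follows essentially the same route as the paper: the generalized Young-measure representation together with Jensen's inequality for the convex quartic functional $G$ gives the gap $\mathcal D^{(3)}_\sigma$ and the oscillation/concentration dichotomy, quadratic growth of $\mathcal C$ removes the concentration part from the weak limit, and a Dirac profile at the barycenter reproduces $R_\sigma[u]$. You are somewhat more careful than the paper, extracting along a $\liminf$-realizing subsequence and giving the explicit bound $|\mathcal C(\Xi)|\le 2\|\Xi\|_{E^\sharp_\sigma}^2$, and slightly sharper, since $R^{\mathrm{YM}}_\sigma$ is defined through $\nu_{x,t}$ alone and so $D\ne0$ forces non-Dirac oscillation without assuming that the concentration measure vanishes.
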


\begin{proof}
The first assertion follows from the generalized Young-measure lower semicontinuity theorem applied to the convex functional $G(\Xi)=\|\Xi\|_{E^\sharp_\sigma}^{4}$ and from Jensen's inequality.  Strict excess should be carried either by strict Jensen inequality, which means non-Dirac oscillation, or by the concentration measure.  The covariance statement follows because $|\mathcal C(\Xi)|\lesssim \|\Xi\|_{E^\sharp_\sigma}^2$.  Since this growth is subcritical relative to the quartic bound, the concentration part does not contribute to the weak $L^2$ limit of the covariance map.  A Dirac, concentration-free profile gives $R^{\mathrm{YM}}_\sigma=R_\sigma[u]$, so a nonzero defect forces nontrivial microstructure.
\end{proof}

\begin{remark}[No converse from vanishing stress defect]\label{rem:no-converse-stress-defect}
The implication above is one-way.  A nonzero commutator stress defect forces nontrivial increment microstructure, but a zero commutator stress defect does not force the Young profile to be Dirac.  The covariance map is not injective.  Therefore this paper does not use the false implication $D=0\Rightarrow\nu_{x,t}$ is Dirac.
\end{remark}

\subsection{Defect work and the commutator recurrence test}

The extracted Young measure separates the scalar size of the commutator increment defect from the microstructure that carries it.  The next question is whether this microstructure can generate positive recurrent commutator work.  Formally, once a defect stress
\[
R_\sigma^{\mathrm{def}}=R_\sigma^{\mathrm{YM}}-R_\sigma[u]
\]
has been extracted, one may test the associated defect forcing by
\begin{equation}\label{eq:comm-defect-work}
W_{\mathrm{com}}^{\mathrm{def}}
:=
r\left|
\iint \chi_r\,
\Omega_\ell\cdot \nabla\times\nabla\!\cdot R_\ell^{\mathrm{def}}\,dxdt
\right|,
\end{equation}
under any mollified or distributionally justified interpretation of the pairing.  The corresponding defect increment mass is
\begin{equation}\label{eq:S-def-3}
S_{\mathrm{def}}^{(3)}
:=
\frac r{\ell^2}
\iint \chi_r
\left\langle \nu_{x,t},
\|W-\bar V(x,t)\|_{E_\sigma^\sharp}^4
\right\rangle dxdt,
\end{equation}
where $\bar V(x,t)=\langle\nu_{x,t},W\rangle$ is the resolved increment.  A natural scale-by-scale defect-work ratio is therefore
\begin{equation}\label{eq:comm-efficiency}
\mathfrak E_{\mathrm{com}}
:=
\frac{(W_{\mathrm{com}}^{\mathrm{def}})_+}{S_{\mathrm{def}}^{(3)}+\varepsilon},
\qquad \varepsilon>0.
\end{equation}
The ratio $\mathfrak E_{\mathrm{com},k}$ organizes the next closure test: whether a fixed portion of the defect increment mass can be converted into positive commutator work along a dyadic chain.  Decay of this defect-work ratio would render a persistent increment defect harmless for positive defect work, whereas a positive lower bound would identify a recurrent commutator defect.

\begin{remark}[Young-measure profile as a recurrence test]\label{rem:young-not-positive-defect-work}
The extraction theorem supplies the compactness object on which cross-scale positive defect work can be formulated quantitatively.  The decisive question is whether a nontrivial increment Young measure can sustain positive recurrent commutator work from one scale to the next.  A vanishing defect-work ratio would rigidify the commutator term; a persistent defect-work ratio would isolate a candidate persistent positive defect work in any counterexample scenario.  Thus the Young-measure profile converts an open qualitative question into the explicit recurrence test \eqref{eq:comm-efficiency}.
\end{remark}

\section{Consequences and remaining alternatives}\label{sec:discussion}

The estimates above give a closed near-field coercive bound and leave three explicit residual classes: far-field terms, differentiated commutator terms, and localization residuals.  The core implication is
\[
\begin{aligned}
\text{near-field positive stretching}
&\Longrightarrow \text{filtered direction incoherence}\\
&\Longrightarrow \text{vorticity difference quotients}\\
&\Longrightarrow \text{filtered diffusion plus lower-order enstrophy}.
\end{aligned}
\]
At one scale this is \cref{cor:stretching-coercivity}; on a finite dyadic chain it becomes the surplus inequality \cref{thm:weighted-surplus}.  The significance of the balance formulation is that any positive surplus surviving the near-field absorption is no longer an undifferentiated nonlinear remainder.  It is assigned to a finite collection of analytically structured residual terms.

\subsection{Near-field coercivity}

The singular part of the stretching interaction satisfies the scale-normalized chain
\[
\mathcal V^{+,\mathrm{near}}_{r,\ell}
\lesssim
\mathcal A^{\mathrm{pair}}_{r,\ell}
\lesssim
\eta\mathcal P^\rho_{r,\ell}
+
C_\eta M_{r,\rho}(u)
\left(\frac r\ell\right)^5\mathcal O_{r,\ell}.
\]
This closes the near-field term at fixed relative filter length.  The angular cancellation is exact at the filtered level, and the magnitude weights neutralize the zero set of $\Omega_\ell$ without introducing denominator regularization.

\subsection{Far-field annular packing}

The weighted factor in \cref{thm:farfield-packing} is the baseline cost of estimating the nonlocal strain tail from energy alone.  Proposition~\ref{prop:reassigned-farfield} gives a moving-shell reassignment: it rewrites part of the far-field work as a discrete convolution between annular vorticity reservoirs and core enstrophy profiles.  This proves a conditional Carleson closure theorem under explicit summability assumptions on the reassigned reservoirs and core profiles.

A harmonic Taylor expansion requires fixed exterior sources.  Accordingly, this paper treats harmonic affine-jet cancellation as a separate conditional route, described in \cref{rem:fixed-annulus-harmonic-route}.  A complete harmonic-rigidity theorem should begin from a fixed smooth annular partition centered at the base point and control the resulting affine jets directly.

\subsection{Commutator increment microstructure}

The commutator forcing is differentiated, and its natural observable should therefore see both the filter and its derivative.  The envelope $\mathfrak M_{\ell,p}$ and the scale-invariant quantity $\widetilde{\mathcal S}^{(p)}_{r,\ell}$ retain this structure exactly.  \Cref{thm:increment-comm} consequently replaces the dyadic energy loss by an intrinsic derivative-compatible scale-invariant increment defect.

A scalar increment bound records the size of this defect but not, by itself, the microstructure carrying it.  The unconditional compactness statement in this paper is a cylindrical generalized Young profile for the derivative-compatible increment field.  Any further use of the full norm functional or commutator covariance map is stated under the explicit full-representation hypothesis in \cref{def:full-ym-assumption}.  Under that additional hypothesis, unresolved quartic excess yields oscillation or concentration, and nonzero commutator stress defect forces a nontrivial increment profile.  The converse is not asserted.

\subsection{Localization as a modular budget}

In the exact whole-space setting, the principal cutoff residual is cancelled by the adjoint drift-diffusion weight of \cref{prop:adjoint-residual}, provided one allows the corresponding solution-adapted weight.  The remaining shell terms are explicit nonnegative budgets generated by enlarged diffusion regions and integration by parts in the commutator term.  A compactly supported local-cylinder theorem would add a solenoidal extension or localized filter and its transition budgets.  Those terms form a separate localization module.

\subsection{Recurrence versus rigidity}

The set of alternatives now has three precise inputs: unweighted far-field packing, summability or rigidity of the derivative-compatible commutator increment defect, and summability of the residual localization budgets.  Under these inputs, \cref{thm:balance-closure-vanishing} yields decay of the positive defect surplus.

The remaining commutator question is sharper than the existence of a nonzero scalar increment bound.  It is whether the extracted oscillation-concentration profile can repeatedly convert unresolved increment mass into positive commutator work.  The defect-work ratio \eqref{eq:comm-efficiency} formulates this as a scale-by-scale test.  Decay of this ratio would give a rigidity alternative; a persistent positive lower bound would isolate recurrent positive defect work encoded by the increment profile.

The estimates are structural. The singular near-field component of $\mathbb S_\ell\Omega_\ell\cdot\Omega_\ell$ is absorbed by diffusion, the far-field component is reduced to annular packing with explicit conditional closure requirements, and the commutator term is represented by a derivative-compatible increment defect together with a cylindrical obstruction profile.  Each surviving term is named, normalized, and paired with a specific summability, cancellation, or rigidity condition.

\section*{Declaration on AI assistance}
AI tools were used for language editing, organizational assistance, and preliminary reference-format checking.  All mathematical arguments, statements, proofs, and citations were reviewed and verified by the author, who takes full responsibility for the manuscript.

\end{document}